\newtheorem{theorem}{Theorem}[section]
\newtheorem{corollary}[theorem]{Corollary}
\newtheorem{lemma}[theorem]{Lemma}
\newtheorem{proposition}[theorem]{Proposition}
\newtheorem{remark}[theorem]{Remark}
\theoremstyle{definition}
\newtheorem{definition}[theorem]{Definition}
\theoremstyle{remark}
\newtheorem{example}[theorem]{Example}
\numberwithin{equation}{section}
\def\squareforqed{\hbox{\rlap{$\sqcap$}$\sqcup$}}
\def\qed{\ifmmode\else\unskip\quad\fi\squareforqed}
\def\smartqed{\def\qed{\ifmmode\squareforqed\else{\unskip\nobreak\hfil
\penalty50\hskip1em\null\nobreak\hfil\squareforqed
\parfillskip=0pt\finalhyphendemerits=0\endgraf}\fi}}
\begin{document}
\title{Analytic approximation of transmutation operators and related systems of functions}

\author{Vladislav V. Kravchenko and Sergii M. Torba\\{\small Departamento de Matem\'{a}ticas, CINVESTAV del IPN, Unidad
Quer\'{e}taro, }\\{\small Libramiento Norponiente No. 2000, Fracc. Real de Juriquilla,
Quer\'{e}taro, Qro. C.P. 76230 MEXICO}\\{\small e-mail: vkravchenko@math.cinvestav.edu.mx,
storba@math.cinvestav.edu.mx \thanks{Research was supported by CONACYT, Mexico
via the projects 166141 and 222478.}}}

\maketitle

\begin{abstract}
In \cite{KT AnalyticApprox} a method for approximate solution of
Sturm-Liouville equations and related spectral problems was presented based on
the construction of the Delsarte transmutation operators. The problem of
numerical approximation of solutions and eigendata was reduced to
approximation of a primitive of the potential by a finite linear combination
of certain specially constructed functions obtained from the generalized wave
polynomials introduced in \cite{KKTT}, \cite{KT Transmut}. The method allows
one to compute both lower and higher eigendata with an extreme accuracy.

Since the solution of the approximation problem is the main step in the
application of the method, the properties of the system of functions involved
are of primary interest. In \cite{KT AnalyticApprox} two basic properties were
established: the completeness in appropriate functional spaces and the linear
independence. In this paper we present a considerably more complete study of
the systems of functions. We establish their relation with another linear
differential second-order equation, find out certain operations (in a sense,
generalized derivatives and antiderivatives) which allow us to generate the
next such function from a previous one. We obtain the uniqueness of the
coefficients of expansions in terms of such functions and a corresponding
generalized Taylor theorem, as well as formulas for exact expansion
coefficients involving the operations mentioned above. We also construct the
invertible integral operators transforming powers of the independent variable
into the functions under consideration and establish their commutation
relations with differential operators. We present some error bounds for the solution of the approximation problem depending on the smoothness of the potential and show that these error bounds are close to optimal in order.
Also, we provide a rigorous  justification of the alternative formulation of the proposed method allowing one to make use of the known initial values of the solutions at the left endpoint of the spectral problem.
\end{abstract}

\section{Introduction}


One of the important mathematical
tools for studying problems related to Sturm-Liouville equations was
introduced in 1938 by J. Delsarte \cite{Delsarte} and called \cite{DelsarteLions1956} the transmutation operator. It relates two linear
differential operators and allows one to transform a more complicated
equation into a simpler one. Nowadays the transmutation operator is widely
used in the theory of linear differential equations (see, e.g., \cite{BegehrGilbert}, \cite{Carroll}, \cite{Levitan}, \cite{Marchenko},
\cite{Sitnik}, \cite{Trimeche}). Very often in the literature the transmutation
operators are called the transformation operators. It is well known that
under certain regularity conditions the transmutation operator transmuting
the operator $A=-\frac{d^{2}}{dx^{2}}+q(x)$ into $B=-\frac{d^{2}}{dx^{2}}$
can be realized in the form of a Volterra integral operator with good properties. Its integral kernel
can be obtained as a solution of a certain Goursat problem for the
Klein-Gordon equation with a variable coefficient. In spite of their
attractive properties and importance there exist very few examples of the
transmutation kernels available in a closed form (see \cite{KrT2012}).

Several recent results
concerning the transmutation operators made it possible in \cite{KT
AnalyticApprox}, \cite{KMoTo} to convert the transmutation operators from a purely theoretical tool into an efficient practical method for solving Sturm-Liouville equations and related spectral problems. The method is called the analytic approximation of transmutation operators. It is based on the result from \cite{KKTT} where a new complete system of solutions for the Klein-Gordon equation was constructed. The functions of that system are called generalized wave polynomials. Since the integral kernel of the transmutation operator is a solution of that Klein-Gordon equation, it is possible to approximate the integral kernel as well. It was shown in \cite{KT AnalyticApprox} how the problem of approximation of the integral kernel reduces to the solution of two one-dimensional problems of approximation of the pair of functions $g_{1}(x)=\frac{h}{2}+\frac{1}{4}\int_{0}^{x}q(s)ds$ and $g_{2}(x)=\frac{1}{4}\int_{0}^{x}q(s)ds$ in terms of specially constructed families of functions $\{\mathbf{c}_n\}$ and $\{\mathbf{s}_n\}$, appearing as traces of the generalized wave polynomials on the line $x=t$.
With respect to the variable of integration the approximate kernel
results to be a polynomial. This is especially convenient since to obtain solutions of the Schr\"{o}dinger
equation the transmutation operator is applied to the functions $\sin \sqrt{\lambda }t$ and $\cos \sqrt{\lambda }t$, solutions of the simplest such equation $Bv=\lambda v$, and thus all the involved integrals can be calculated
explicitly. One of the advantages of the method is that due to the independence of the integral kernel of the spectral parameter the error of the computed eigendata does not increase for higher
eigenvalues. One can compute, e.g., the $1000$th eigenvalue and
eigenfunction with roughly the same accuracy as the first ones.

In \cite{KT AnalyticApprox} we presented a rigorous
justification of the method, however several important questions remained unanswered. It was observed in various numerical examples that the method allows one to obtain highly accurate eigendata and demonstrates exponential convergence with respect to the number of the functions $\{\mathbf{c}_n\}$ and $\{\mathbf{s}_n\}$ used while we only proved that the analytic approximations converge without any convergence rate estimates. Also the functionality of the method was proved for the symmetric segment $[-b,b]$ and in such form the method did not allow one to make use of the known initial values of the solutions at $x=0$.

The present paper is dedicated to further study of the analytic approximations of transmutation operators and of the systems of functions  $\{\mathbf{c}_n\}$ and $\{\mathbf{s}_n\}$ involved. In particular, we show that the systems of functions $\{\mathbf{c}_n\}$ and $\{\mathbf{s}_n\}$ are closely related with another linear differential second-order equation, find out certain operations (in a sense, generalized derivatives) which allow us to present Taylor-type formulas in terms of the linear combinations of the functions $\{\mathbf{c}_n\}$ and $\{\mathbf{s}_n\}$. We also construct the
invertible integral operators transforming powers of the independent variable
into the functions $\{\mathbf{c}_n\}$ and $\{\mathbf{s}_n\}$ and establish their commutation relations with differential operators. We prove that the convergence rate of the analytic approximations method depends on the smoothness of the potential and provide the justification of the alternative formulation of the method allowing one to make use of the known initial values of the solutions at an endpoint of the spectral problem.

The paper is structured as follows. In Section \ref{Sect 2} we introduce some necessary notations, definitions and properties concerning special systems of functions called formal powers, generalized wave polynomials and their traces $\{\mathbf{c}_n\}$ and $\{\mathbf{s}_n\}$. We present the definition of  the transmutation operators, recall some properties and briefly outline the method of analytic approximation of the transmutation operators. Also we present new relations for the integral kernel of the transmutation operator on the characteristics $x=t$ and $x=-t$. We would like to mention that these  relations \eqref{K1(x,x)} and \eqref{K1(x,-x)} already found applications to a posteriori accuracy verification in \cite{KNT}. In Section \ref{Sect 3} we show that the functions $\{\mathbf{c}_n\}$ and $\{\mathbf{s}_n\}$ are closely related to the formal powers for the equation $y'' - q(x)y = 2\lambda y'$. In Section \ref{Sect 4} we introduce the generalized derivatives $\gamma_1$ and $\gamma_2$ acting as $\gamma_2\gamma_1 \mathbf{c}_n = n\mathbf{s}_{n-1}$ and $\gamma_2\gamma_1 \mathbf{s}_n = n\mathbf{c}_{n-1}$ (the second formula is valid for $n\ge 2$ only). Also we study Taylor-type formulas in terms of the functions $\{\mathbf{c}_n\}$ and $\{\mathbf{s}_n\}$. We present the formulas for the coefficients  and prove the corresponding Taylor-type theorem with the Peano form of the remainder term. In Section \ref{Sect 5} we obtain a new representation for the preimage of the integral kernel of the transmutation operator. Also we prove that the even coefficients of the generalized Taylor series for the two functions related to the integral kernel coincide. In Section \ref{Sect 6} we study the Goursat-to-Goursat transmutation operators $G_1$ and $G_2$ mapping the powers of the independent variable to the functions $\mathbf{c}_n$ and $\mathbf{s}_n$ respectively. We prove some commutation relations involving these operators, derivatives and generalized derivatives $\gamma_2\gamma_1$. We show that the existence of certain number of the generalized derivatives $(\gamma_2\gamma_1)^j$ of a function implies a certain order differentiability of the preimage under the action of either operator $G_1$ or operator $G_2$ and vice versa. In Section \ref{Sect ConvEstimates} we show that there exists a close relation between the smoothness of the potential $q$ and the convergence rate of the analytic approximation of the transmutation operator, corresponding direct and inverse theorems are proved. In Section \ref{Sect 8} we present alternative proofs of the main theorems of the analytic approximation method. The proofs are based on the well-posedness of the Goursat problem and does not involve neither inverse operators nor pseudoanalytic function theory. Also the proofs justify the applicability of the analytic approximation method in the restricted setting, when the equation in considered on the half-segment $[0,b]$ only and no continuation of the potential onto the whole segment $[-b,b]$ is used.

\section{Transmutation operators and systems of functions $\left\{
\mathbf{c}_{n}\right\}  $ and $\left\{  \mathbf{s}_{n}\right\}  $}
\label{Sect 2}

\subsection{Formal powers}

Let $f\in C[a,b]$ be a complex valued function and $f(x)\neq0$ for any
$x\in\lbrack a,b]$. The interval $(a,b)$ is assumed being finite. Consider two
sequences of recursive integrals%
\begin{equation*}
X^{(0)}(x)\equiv1,\qquad X^{(n)}(x)=n\int_{x_{0}}^{x}X^{(n-1)}(s)\left(
f^{2}(s)\right)  ^{(-1)^{n}}\,\mathrm{d}s,\qquad x_{0}\in\lbrack a,b],\quad
n=1,2,\ldots
\end{equation*}
and
\begin{equation*}
\widetilde{X}^{(0)}\equiv1,\qquad\widetilde{X}^{(n)}(x)=n\int_{x_{0}}%
^{x}\widetilde{X}^{(n-1)}(s)\left(  f^{2}(s)\right)  ^{(-1)^{n-1}}%
\,\mathrm{d}s,\qquad x_{0}\in\lbrack a,b],\quad n=1,2,\ldots. 
\end{equation*}

Define two families of functions $\left\{  \varphi_{k}\right\}  _{k=0}%
^{\infty}$ and $\left\{  \psi_{k}\right\}  _{k=0}^{\infty}$ constructed
according to the rules
\begin{equation}
\varphi_{k}(x)=%
\begin{cases}
f(x)X^{(k)}(x), & k\text{\ odd},\\
f(x)\widetilde{X}^{(k)}(x), & k\text{\ even},
\end{cases}
\label{phik}%
\end{equation}
and
\begin{equation}
\psi_{k}(x)=%
\begin{cases}
\dfrac{\widetilde{X}^{(k)}(x)}{f(x)}, & k\text{\ odd,}\\
\dfrac{X^{(k)}(x)}{f(x)}, & k\text{\ even}.
\end{cases}
\label{psik}%
\end{equation}

\subsection{SPPS representations}

The following result obtained in \cite{KrCV08} (for additional details and
simpler proof see \cite{APFT} and \cite{KrPorter2010}) establishes the
relation of the system of functions $\left\{  \varphi_{k}\right\}
_{k=0}^{\infty}$ and $\left\{  \psi_{k}\right\}  _{k=0}^{\infty}$ to the
Sturm-Liouville equation.

\begin{theorem}
\label{ThGenSolSturmLiouville} Let $q$ be a continuous complex valued function
of an independent real variable $x\in\lbrack a,b]$ and $\lambda$ be an
arbitrary complex number. Let $f$ be a solution of the equation
\begin{equation}
f^{\prime\prime}-qf=0 \label{SLhom}%
\end{equation}
on $(a,b)$ such that $f\in C^{2}(a,b)\cap C^{1}[a,b]$ and $f(x)\neq0$\ for any
$x\in\lbrack a,b]$ (it always exists, see Remark \ref{RemarkNonVanish}). Then the
general solution $y\in C^{2}(a,b)\cap C^{1}[a,b]$ of the equation
\begin{equation}
y^{\prime\prime}-qy=\lambda y \label{SLlambda}%
\end{equation}
on $(a,b)$ has the form $y=c_{1}y_{1}+c_{2}y_{2}$ where $c_{1}$ and $c_{2}$
are arbitrary complex constants,
\begin{equation}
y_{1}=\sum_{k=0}^{\infty}\frac{\lambda^{k}}{(2k)!}\varphi_{2k}\qquad
\text{and}\qquad y_{2}=\sum_{k=0}^{\infty}\frac{\lambda^{k}}{(2k+1)!}%
\varphi_{2k+1} \label{u1u2}%
\end{equation}
and both series converge uniformly on $[a,b]$ together with the series of the
first derivatives which have the form%
\begin{equation}\label{du1du2}
y_{1}^{\prime}=f^{\prime}+\sum_{k=1}^{\infty}\frac{\lambda^{k}}{(2k)!}\left(
\frac{f^{\prime}}{f}\varphi_{2k}+2k\,\psi_{2k-1}\right)  \quad\text{and}\quad
y_{2}^{\prime}=\sum_{k=0}^{\infty}\frac{\lambda^{k}}{(2k+1)!}\left(
\frac{f^{\prime}}{f}\varphi_{2k+1}+\left(  2k+1\right)  \psi_{2k}\right)  .
\end{equation}
The series of the second derivatives converge uniformly on any segment
$[a_{1},b_{1}]\subset(a,b)$.
\end{theorem}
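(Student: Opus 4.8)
The plan is to establish the statement by a direct manipulation of the series \eqref{u1u2}, using the P\'olya-type factorization of the Sturm--Liouville operator together with the defining recursion of the integrals $X^{(n)}$ and $\widetilde X^{(n)}$. First I would record the elementary differentiation identities $\frac{d}{dx}X^{(n)}(x)=nX^{(n-1)}(x)\bigl(f^2(x)\bigr)^{(-1)^n}$ and $\frac{d}{dx}\widetilde X^{(n)}(x)=n\widetilde X^{(n-1)}(x)\bigl(f^2(x)\bigr)^{(-1)^{n-1}}$, which are valid because $f$ is continuous and non-vanishing on $[a,b]$, so that $f^{\pm2}$ and, by induction, every $X^{(n-1)}$, $\widetilde X^{(n-1)}$ are continuous, and hence each $X^{(n)}$, $\widetilde X^{(n)}$ is $C^1$. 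The structural fact that makes the subsequent computation transparent is that on $(a,b)$ one has $y''-qy=\frac{1}{f}\frac{d}{dx}\!\left(f^2\frac{d}{dx}\frac{y}{f}\right)$ by \eqref{SLhom}; this is precisely what dictates the alternation of the weights $f^{\pm2}$ in the two recursions.

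Second, I would establish the bounds needed for convergence. Putting $M=\max\bigl(\|f^2\|_{C[a,b]},\,\|1/f^2\|_{C[a,b]}\bigr)$ (note $M\ge1$ since $f^2\cdot f^{-2}\equiv1$), a one-line induction gives $|X^{(n)}(x)|\le M^n|x-x_0|^n$ and $|\widetilde X^{(n)}(x)|\le M^n|x-x_0|^n$ for every $n$ and $x\in[a,b]$. Together with the boundedness on $[a,b]$ of $f$ and of $f'/f$, the Weierstrass $M$-test then gives absolute and uniform convergence on $[a,b]$ of the series \eqref{u1u2} and of the series \eqref{du1du2} (the extra factors $2k$, resp.\ $2k+1$, are harmless because $2k/(2k)!=1/(2k-1)!$, etc.), as well as uniform convergence on every $[a_1,b_1]\subset(a,b)$ of the series obtained by differentiating \eqref{du1du2} once more, which now involves $(f'/f)'$, continuous on $(a,b)$. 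Uniform convergence of the differentiated series legitimizes term-by-term differentiation, so $y_1,y_2\in C^2(a,b)\cap C^1[a,b]$ and the formulas \eqref{du1du2} hold.

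Third, I would check that $y_1$ and $y_2$ solve \eqref{SLlambda}. Writing $y_1=f u_1$ with $u_1=\sum_{k\ge0}\frac{\lambda^k}{(2k)!}\widetilde X^{(2k)}$ and using the identities above, one gets $f^2u_1'=\sum_{k\ge1}\frac{\lambda^k}{(2k-1)!}\widetilde X^{(2k-1)}$ and then $\frac{d}{dx}(f^2u_1')=\lambda f^2\sum_{j\ge0}\frac{\lambda^j}{(2j)!}\widetilde X^{(2j)}=\lambda f^2u_1$, which by the factorization is exactly $y_1''-qy_1=\lambda y_1$; the computation for $y_2$ is identical with $X^{(2k+1)}$ in place of $\widetilde X^{(2k)}$. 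Evaluating at $x_0$, where $X^{(n)}(x_0)=\widetilde X^{(n)}(x_0)=0$ for $n\ge1$, gives $y_1(x_0)=f(x_0)$, $y_1'(x_0)=f'(x_0)$, $y_2(x_0)=0$, $y_2'(x_0)=1/f(x_0)$, so the Wronskian of $y_1$ and $y_2$ at $x_0$ equals $1$; hence $\{y_1,y_2\}$ is a fundamental system of \eqref{SLlambda} on $(a,b)$ and the general solution is $c_1y_1+c_2y_2$. The existence of a non-vanishing $f$ with the required regularity is quoted from Remark \ref{RemarkNonVanish}.

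The main obstacle is not conceptual but lies in the analytic bookkeeping at two points: obtaining the bounds in a form in which the factorials in \eqref{u1u2}--\eqref{du1du2} dominate the geometric growth $\bigl(M(b-a)\bigr)^n$, and then justifying the second term-by-term differentiation. Since $f$ is assumed $C^2$ only on the open interval, the latter is carried out on compact subintervals of $(a,b)$, which is exactly why the theorem asserts convergence of the series of second derivatives only there.
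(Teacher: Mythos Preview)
Your argument is correct. The P\'olya factorization $y''-qy=\tfrac{1}{f}\,\partial_x\!\bigl(f^{2}\partial_x(y/f)\bigr)$, the inductive bound $|X^{(n)}|,\,|\widetilde X^{(n)}|\le M^{n}|x-x_0|^{n}$, and the Weierstrass $M$-test are exactly the right ingredients, and your verification that $y_1,y_2$ satisfy \eqref{SLlambda} together with the Wronskian computation at $x_0$ is clean.

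As for comparison with the paper: the present paper does not prove Theorem~\ref{ThGenSolSturmLiouville} at all---it quotes it from \cite{KrCV08} (see also \cite{APFT}, \cite{KrPorter2010}). Your proof is essentially the standard one given in those references: the same factorization and the same growth estimate drive the argument there as well. So there is no genuinely different route to report.
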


Representations \eqref{u1u2} and \eqref{du1du2} are called SPPS
 (Spectral Parameter Power Series) representations. They have been implemented
for solving a variety of spectral and scattering problems related to
Sturm-Liouville equations. The first work using Theorem
\ref{ThGenSolSturmLiouville} for numerical solution was \cite{KrPorter2010}
and later on the SPPS method was used in a number of publications (see
\cite{CKOR}, \cite{CKT2013}, \cite{CKT2015}, \cite{ErbeMertPeterson2012}, \cite{KKB}, \cite{KKRosu},
\cite{KiraRosu2010}, \cite{KT Obzor}, \cite{KTV} and references therein).

\begin{remark}
\label{RemInitialValues}It is easy to see that by definition the solutions
$y_{1}$ and $y_{2}$ from \eqref{u1u2} satisfy the following initial
conditions
\begin{align*}
y_{1}(x_{0})  &  =f(x_{0}), & y_{1}^{\prime}(x_{0})  &  =f^{\prime}(x_{0}),\\
y_{2}(x_{0})  &  =0, & y_{2}^{\prime}(x_{0})  &  =1/f(x_{0}).
\end{align*}
\end{remark}

\begin{remark}
\label{RemarkNonVanish} It is worth mentioning that in the regular case the
existence and construction of the required $f$ presents no difficulty. Indeed,
let $q$ be real valued and continuous on $[a,b]$. Then \eqref{SLhom} possesses
two linearly independent real-valued solutions $f_{1}$ and $f_{2}$ whose zeros
alternate. Thus, one may choose $f=f_{1}+if_{2}$. Moreover, for the
construction of $f_{1}$ and $f_{2}$ in fact the same SPPS method may be used
\cite{KrPorter2010}. In the case of complex-valued coefficients the existence
of a non-vanishing solution was shown in \cite[Remark 5]{KrPorter2010}, see also \cite{Camporesi et al 2011}.
\end{remark}

\subsection{Transmutation operators}\label{Subsection Transmutation Operators}

Let $E$ be a linear topological space and $E_{1}$ its linear subspace (not
necessarily closed). Let $A$ and $B$ be linear operators: $E_{1}\rightarrow E$.

\begin{definition}
\label{DefTransmut} A linear invertible operator $T$ defined on the whole $E$
such that $E_{1}$ is invariant under the action of $T$ is called a
transmutation operator for the pair of operators $A$ and $B$ if it fulfills
the following two conditions.

\begin{enumerate}
\item Both the operator $T$ and its inverse $T^{-1}$ are continuous in $E$;

\item The following operator equality is valid
\begin{equation}
AT=TB \label{ATTB}%
\end{equation}
or which is the same
\[
A=TBT^{-1}.
\]

\end{enumerate}
\end{definition}

Our main interest concerns the situation when $A=-\frac{d^{2}}{dx^{2}}+q(x)$,
$B=-\frac{d^{2}}{dx^{2}}$,  and $q$ is a continuous complex-valued
function. Consider the space $E=C[-b,b]$ with $b$ being a positive number. In
\cite{CKT} and \cite{KrT2012} a parametrized family of transmutation operators
for $A$ and $B$ was studied. Operators of this family can be realized in the
form of a Volterra integral operator with a kernel associated to a value of
the complex parameter $h$,
\begin{equation}
\mathbf{T}u(x)=u(x)+\int_{-x}^{x}\mathbf{K}(x,t;h)u(t)dt. \label{Tmain}%
\end{equation}
It is convenient to interpret the parameter $h$ as follows. Let $f$ be a
solution of (\ref{SLhom}) satisfying the initial conditions
\begin{equation*}
f(0)=1\qquad\text{and}\qquad f^{\prime}(0)=h. 
\end{equation*}
Then there exists a unique operator of the form (\ref{Tmain}) satisfying
(\ref{ATTB}) and such that $\mathbf{T}\left[  1\right]  =f$. We will denote it
by $\mathbf{T}_{f}$ and its kernel by $\mathbf{K}(x,t)$. Thus, the operator
$\mathbf{T}_{f}$ has the form
\begin{equation}
\mathbf{T}_{f}u(x)=u(x)+\int_{-x}^{x}\mathbf{K}(x,t)u(t)dt. \label{Tf}%
\end{equation}
The kernel can be defined as $\mathbf{K}(x,t)=\mathbf{H}\big(\frac{x+t}%
{2},\frac{x-t}{2}\big)$, $|t|\leq|x|\leq b$, $\mathbf{H}$ being the unique
solution of the Goursat problem
\begin{equation}
\frac{\partial^{2}\mathbf{H}(u,v)}{\partial u\,\partial v}=q(u+v)\mathbf{H}%
(u,v), \label{GoursatTh1}%
\end{equation}%
\begin{equation}
\mathbf{H}(u,0)=\frac{h}{2}+\frac{1}{2}\int_{0}^{u}q(s)\,ds,\qquad
\mathbf{H}(0,v)=\frac{h}{2} \label{GoursatTh2}%
\end{equation}
where $h:=f^{\prime}(0)$.

If the potential $q$ is continuously differentiable, the kernel $\mathbf{K}$
itself is a solution of the Goursat problem
\begin{equation}
\left(  \frac{\partial^{2}}{\partial x^{2}}-q(x)\right)  \mathbf{K}%
(x,t)=\frac{\partial^{2}}{\partial t^{2}}\mathbf{K}(x,t), \label{GoursatKh1}%
\end{equation}%
\begin{equation}
\mathbf{K}(x,x)=\frac{h}{2}+\frac{1}{2}\int_{0}^{x}q(s)\,ds,\qquad
\mathbf{K}(x,-x)=\frac{h}{2}. \label{GoursatKh2}%
\end{equation}
If the potential $q$ is $n$ times continuously differentiable, the kernel
$\mathbf{K}(x,t)$ is $n+1$ times continuously differentiable with respect to
both independent variables.

The following theorem states that the operators $\mathbf{T}_{f}$ are indeed
transmutations in the sense of Definition \ref{DefTransmut}.

\begin{theorem}
[\cite{KT Transmut}]\label{Th Transmutation} Let $q\in C[-b,b]$. Then the
operator $\mathbf{T}_{f}$ defined by \eqref{Tf} satisfies the equality
\begin{equation}
\left(  -\frac{d^{2}}{dx^{2}}+q(x)\right)  \mathbf{T}_{f}[u]=\mathbf{T}%
_{f}\left[  -\frac{d^{2}}{dx^{2}}(u)\right]  \label{transmutationT}%
\end{equation}
for any $u\in C^{2}[-b,b]$.
\end{theorem}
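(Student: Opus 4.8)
The plan is to prove the identity first under the additional hypothesis $q\in C^{1}[-b,b]$, where, by the discussion preceding the statement, the kernel $\mathbf{K}$ belongs to $C^{2}$ and solves the Goursat problem \eqref{GoursatKh1}--\eqref{GoursatKh2}, and then to remove this hypothesis by a uniform approximation argument.

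\textbf{Step 1: smooth potential.} Fix $u\in C^{2}[-b,b]$. Starting from \eqref{Tf}, I would differentiate $\mathbf{T}_{f}[u](x)=u(x)+\int_{-x}^{x}\mathbf{K}(x,t)u(t)\,dt$ twice with the Leibniz rule for integrals with variable endpoints; this yields, besides $\int_{-x}^{x}\partial_{x}^{2}\mathbf{K}(x,t)u(t)\,dt$, boundary contributions at $t=\pm x$ built from $\mathbf{K}(x,\pm x)$, $\partial_{x}\mathbf{K}(x,\pm x)$, $\partial_{t}\mathbf{K}(x,\pm x)$ and the values $u(\pm x)$. In parallel, in $\mathbf{T}_{f}[u''](x)=u''(x)+\int_{-x}^{x}\mathbf{K}(x,t)u''(t)\,dt$ I would integrate by parts twice in $t$ to move both derivatives onto $\mathbf{K}$, again producing boundary terms at $t=\pm x$ involving $\partial_{t}\mathbf{K}(x,\pm x)$, $\mathbf{K}(x,\pm x)$ and the values $u'(\pm x)$, $u(\pm x)$. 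Subtracting, the two integral remainders combine into $\int_{-x}^{x}\bigl(\partial_{x}^{2}\mathbf{K}-\partial_{t}^{2}\mathbf{K}\bigr)(x,t)u(t)\,dt$, which by the PDE \eqref{GoursatKh1} equals $q(x)\int_{-x}^{x}\mathbf{K}(x,t)u(t)\,dt$; together with the free term $q(x)u(x)$ this reproduces exactly $q(x)\mathbf{T}_{f}[u](x)$, as required by \eqref{transmutationT}.

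It remains to show that all boundary terms at $t=\pm x$ cancel, and this is the main obstacle: the Leibniz rule and the integration by parts deliver the \emph{partial} derivatives $\partial_{x}\mathbf{K}$ and $\partial_{t}\mathbf{K}$ on the characteristics separately, whereas the Goursat conditions \eqref{GoursatKh2} constrain only certain combinations of them. Differentiating \eqref{GoursatKh2} along the characteristics gives $\partial_{x}\mathbf{K}(x,x)+\partial_{t}\mathbf{K}(x,x)=\tfrac12 q(x)$ and $\partial_{x}\mathbf{K}(x,-x)-\partial_{t}\mathbf{K}(x,-x)=0$, together with $\mathbf{K}(x,x)=\tfrac h2+\tfrac12\int_{0}^{x}q$ and $\mathbf{K}(x,-x)=\tfrac h2$. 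The careful bookkeeping then consists in grouping the boundary terms precisely so that only these sums, differences and values occur, after which they telescope to zero; the $\tfrac12 q(x)$ contributions from the two endpoints add up to cancel an extra $\tfrac12 q(x)u(x)$ produced by the chain rule, leaving only the $q(x)u(x)$ identified above.

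\textbf{Step 2: continuous potential.} Given $q\in C[-b,b]$, choose $q_{n}\in C^{1}[-b,b]$ with $q_{n}\to q$ uniformly and let $\mathbf{H}_{n}$, $\mathbf{K}_{n}$, $f_{n}$ be the associated Goursat solution, kernel and particular solution. The Goursat problem \eqref{GoursatTh1}--\eqref{GoursatTh2} is equivalent to a linear integral equation whose solution, constructed by successive approximations, depends continuously (uniformly on the triangle $|v|\le u\le b$) on the data $q$ and $h$; hence $\mathbf{K}_{n}\to\mathbf{K}$ uniformly, and therefore $\mathbf{T}_{f_{n}}[u]\to\mathbf{T}_{f}[u]$ uniformly for the fixed $u\in C^{2}[-b,b]$. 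By Step 1, $-\tfrac{d^{2}}{dx^{2}}\mathbf{T}_{f_{n}}[u]=\mathbf{T}_{f_{n}}[u'']-q_{n}\mathbf{T}_{f_{n}}[u]$, and the right-hand side converges uniformly to $\mathbf{T}_{f}[u'']-q\,\mathbf{T}_{f}[u]$. Thus $\mathbf{T}_{f_{n}}[u]$ converges uniformly together with its second derivatives, so $\mathbf{T}_{f}[u]\in C^{2}[-b,b]$ and, passing to the limit, $-\tfrac{d^{2}}{dx^{2}}\mathbf{T}_{f}[u]=\mathbf{T}_{f}[u'']-q\,\mathbf{T}_{f}[u]$, which is precisely \eqref{transmutationT}. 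This limiting step has the additional benefit of yielding the regularity $\mathbf{T}_{f}[u]\in C^{2}$ without any direct estimate on $\partial_{x}^{2}\mathbf{K}$, which is not available when $q$ is only continuous.
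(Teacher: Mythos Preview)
The paper does not prove this theorem: it is quoted from \cite{KT Transmut} and stated without proof, so there is no argument in the paper to compare your proposal against.

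Your proposal is the standard route and is essentially correct. In Step~1 the bookkeeping you describe does work: after the Leibniz rule and two integrations by parts the boundary contributions at $t=\pm x$ assemble into $(\mathbf{K}_{1}(x,x)+\mathbf{K}_{2}(x,x))u(x)$ and $(\mathbf{K}_{1}(x,-x)-\mathbf{K}_{2}(x,-x))u(-x)$, which by \eqref{GoursatKh2} equal $\tfrac12 q(x)u(x)$ and $0$; together with the extra $\tfrac12 q(x)u(x)$ from differentiating $\mathbf{K}(x,x)$ this yields exactly $q(x)u(x)$, and the integral remainder gives $q(x)\int_{-x}^{x}\mathbf{K}(x,t)u(t)\,dt$ by \eqref{GoursatKh1}. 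This is precisely the computation carried out in \cite{KT Transmut} (and, earlier, in \cite{KrT2012}).

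One small point in Step~2: uniform convergence of $\mathbf{T}_{f_{n}}[u]$ and of its \emph{second} derivatives alone does not literally force $\mathbf{T}_{f}[u]\in C^{2}$; you also need convergence of the first derivatives. This is easily supplied: from the explicit formula for $(\mathbf{T}_{f_{n}}[u])'$ obtained in Step~1 one sees that $(\mathbf{T}_{f_{n}}[u])'(0)=u'(0)+hu(0)$ is independent of $n$ (since $\mathbf{K}_{n}(0,0)=h/2$), and then integrating $(\mathbf{T}_{f_{n}}[u])''$ from $0$ gives uniform convergence of the first derivatives as well. With this remark your limiting argument is complete.
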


\begin{remark}
\label{RemTh}$\mathbf{T}_{f}$ maps a solution $v$ of the equation
$v^{\prime\prime}+\omega^{2}v=0$, where $\omega$ is a complex number, into a
solution $u$ of the equation
\begin{equation}
u^{\prime\prime}-q(x)u+\omega^{2}u=0 \label{SLomega2}%
\end{equation}
with the following correspondence of the initial values $u(0)=v(0)$,
$u^{\prime}(0)=v^{\prime}(0)+hv(0)$.
\end{remark}

The integral kernel $\mathbf{K}(x,t)$ satisfies in the region $|t|\le |x|\le b$ the following integral equation (see \cite[(3.3) and (3.4)]{KT Transmut} and \cite[\S 2]{Marchenko}),
\begin{equation}
\label{IntEq for K}\mathbf{K}(x,t) = \frac{h}2 + \frac{1}2\int_{0}^{\frac{x+t}2}
q(s)\,ds + \int_{0}^{\frac{x+t}2} \int_{0}^{\frac{x-t}2}q(\alpha
+\beta)\mathbf{K}(\alpha+\beta, \alpha-\beta)\,d\beta\,d\alpha.
\end{equation}
Differentiating \eqref{IntEq for K} with respect to $t$ we obtain, c.f., \cite[(3.11)]{KNT}, that
\begin{equation}
\label{IntEq for dtK}%
\mathbf{K}_{2}(x,t)  =\frac{1}4 q\left(  \frac{x+t}2\right)  + \frac{1}2\int_{\frac{x+t}2}^{x}
q(z)\mathbf{K}(z, x+t-z)\,dz - \frac{1}2\int_{\frac{x-t}2}^{x} q(z)\mathbf{K}(z, z-(x-t))\,dz,
\end{equation}
here and below by $\mathbf{K}_1(x,t)$ and $\mathbf{K}_2(x,t)$ we
denote the partial derivatives with respect to the first and the second
variable respectively.

Let us introduce the following useful notation
\[
Q(x):=\int_{0}^{x}q(s)ds.
\]
\begin{proposition}
The following relations for the partial derivatives of the transmutation
kernel are valid on the line $t=x$,
\begin{equation}
\mathbf{K}_{1}(x,x)=\frac{1}{4}\left(  q(x)+hQ(x)+\frac{Q^{2}(x)}{2}\right)
\label{K1(x,x)}%
\end{equation}
and
\begin{equation}
\mathbf{K}_{2}(x,x)=\frac{1}{4}\left(  q(x)-hQ(x)-\frac{Q^{2}(x)}{2}\right).
\label{K2(x,x)}%
\end{equation}
\end{proposition}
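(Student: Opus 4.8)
The plan is to obtain \eqref{K2(x,x)} directly by evaluating the already-derived identity \eqref{IntEq for dtK} on the diagonal $t=x$, and then to recover \eqref{K1(x,x)} by differentiating along the diagonal the boundary relation $\mathbf{K}(x,x)=\frac{h}{2}+\frac12 Q(x)$ from \eqref{GoursatKh2} and subtracting \eqref{K2(x,x)}.

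First I would set $t=x$ in \eqref{IntEq for dtK}. Then $\frac{x+t}{2}=x$ and $x-t=0$, so the first term becomes $\frac14 q(x)$, the integral $\frac12\int_{x}^{x}q(z)\mathbf{K}(z,2x-z)\,dz$ vanishes, and the remaining term collapses to $-\frac12\int_{0}^{x}q(z)\mathbf{K}(z,z)\,dz$. Substituting the known boundary value $\mathbf{K}(z,z)=\frac{h}{2}+\frac12 Q(z)$ from \eqref{GoursatKh2}, this equals $-\frac{h}{4}\int_{0}^{x}q(z)\,dz-\frac14\int_{0}^{x}q(z)Q(z)\,dz$. Since $Q'=q$ and $Q(0)=0$, one has $\int_{0}^{x}q(z)\,dz=Q(x)$ and $\int_{0}^{x}q(z)Q(z)\,dz=\int_{0}^{x}Q'(z)Q(z)\,dz=\frac12 Q^{2}(x)$, which gives exactly \eqref{K2(x,x)}.

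Next, because $q\in C[-b,b]$ guarantees (by the regularity statement preceding Theorem \ref{Th Transmutation}, with $n=0$) that $\mathbf{K}$ is continuously differentiable, the map $x\mapsto\mathbf{K}(x,x)$ is differentiable with $\frac{d}{dx}\mathbf{K}(x,x)=\mathbf{K}_{1}(x,x)+\mathbf{K}_{2}(x,x)$. Differentiating the first boundary condition in \eqref{GoursatKh2} yields $\mathbf{K}_{1}(x,x)+\mathbf{K}_{2}(x,x)=\frac12 q(x)$, hence $\mathbf{K}_{1}(x,x)=\frac12 q(x)-\mathbf{K}_{2}(x,x)$; inserting \eqref{K2(x,x)} produces \eqref{K1(x,x)}. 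Alternatively, one may avoid invoking the differentiability of $\mathbf{K}$ altogether by differentiating \eqref{IntEq for K} with respect to $x$ (instead of $t$), obtaining an analogue of \eqref{IntEq for dtK} for $\mathbf{K}_{1}$, and evaluating it at $t=x$: the same elementary identity $\int_{0}^{x}q(z)Q(z)\,dz=\frac12 Q^{2}(x)$ then gives \eqref{K1(x,x)} directly.

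There is no real obstacle in this argument; the only points demanding care are bookkeeping of the numerical constants, the change of variables that converts the differentiated double integral in \eqref{IntEq for K} into the single integrals appearing in \eqref{IntEq for dtK} (already performed in the cited references), and the observation that upon restricting to $t=x$ the boundary integral reduces to $\frac12\int_{0}^{x}q(z)\mathbf{K}(z,z)\,dz$ with $\mathbf{K}(z,z)$ explicitly known from \eqref{GoursatKh2}.
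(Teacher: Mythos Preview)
Your proposal is correct and follows essentially the same route as the paper: evaluate \eqref{IntEq for dtK} at $t=x$, insert the boundary value $\mathbf{K}(z,z)$ from \eqref{GoursatKh2}, use $\int_0^x q(z)Q(z)\,dz=\tfrac12 Q^2(x)$ to obtain \eqref{K2(x,x)}, and then recover \eqref{K1(x,x)} from $\mathbf{K}_1(x,x)+\mathbf{K}_2(x,x)=\tfrac{d}{dx}\mathbf{K}(x,x)=\tfrac12 q(x)$. The only additions are your explicit regularity remark and the alternative via differentiating \eqref{IntEq for K} in $x$, neither of which changes the argument.
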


\begin{proof}
We obtain from \eqref{IntEq for dtK} and \eqref{GoursatKh2} that
\begin{equation*}
    \mathbf{K}_2(x,x)=\frac 14 q(x) - \frac 12\int_0^x q(z)\mathbf{K}(z,z)\,dz = \frac 14 q(x) - \frac 12\int_0^x q(z)\biggl(\frac h2+\frac 12\int_0^z q(s)\,ds\biggr)\,dz
\end{equation*}
which coincides with \eqref{K2(x,x)} observing that $\int_0^x q(z) \int_0^z q(s)\,ds\,dz=Q^2(x)/2$.

Let us notice that
\[
\mathbf{K}_{1}(x,x)+\mathbf{K}_{2}(x,x) = \frac{d\mathbf{K}(x,x)}{dx}=\frac 12 q(x).
\]
Hence \eqref{K1(x,x)} immediately follows from \eqref{K2(x,x)}.
\end{proof}

Similarly we obtain the following proposition.

\begin{proposition}
The following relations for the partial derivatives of the transmutation
kernel are valid on the line $t=-x$,
\begin{equation}\label{K1(x,-x)}
\mathbf{K}_{1}(x,-x)=\mathbf{K}_{2}(x,-x)=\frac{q(0)}{4} + \frac{h}{4}Q(x).
\end{equation}
\end{proposition}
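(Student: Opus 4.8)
The plan is to mirror the argument for the line $t=x$, now evaluating the differentiated integral equation \eqref{IntEq for dtK} on the characteristic $t=-x$. First I would substitute $t=-x$ into \eqref{IntEq for dtK}. Then $\frac{x+t}{2}=0$ and $\frac{x-t}{2}=x$, so the first term becomes $\frac14 q(0)$, the first integral becomes $\frac12\int_{0}^{x} q(z)\mathbf{K}(z,-z)\,dz$, and the second integral, having both limits equal to $x$, vanishes identically.

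Next I would invoke the boundary condition \eqref{GoursatKh2}, which gives $\mathbf{K}(z,-z)=h/2$ for all $z$. Hence $\frac12\int_{0}^{x} q(z)\mathbf{K}(z,-z)\,dz=\frac{h}{4}\int_{0}^{x} q(z)\,dz=\frac{h}{4}Q(x)$, and therefore $\mathbf{K}_{2}(x,-x)=\frac{q(0)}{4}+\frac{h}{4}Q(x)$, which is the asserted value.

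It then remains to identify $\mathbf{K}_{1}(x,-x)$ with $\mathbf{K}_{2}(x,-x)$. For this I would differentiate the identity $\mathbf{K}(x,-x)=h/2$ from \eqref{GoursatKh2} with respect to $x$; by the chain rule $\frac{d}{dx}\mathbf{K}(x,-x)=\mathbf{K}_{1}(x,-x)-\mathbf{K}_{2}(x,-x)$, while the left-hand side is the derivative of a constant and so vanishes. Thus $\mathbf{K}_{1}(x,-x)=\mathbf{K}_{2}(x,-x)$, completing the proof.

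I do not expect a genuine obstacle here: the only points requiring care — the legitimacy of the differentiation under the integral sign that yields \eqref{IntEq for dtK}, and the existence of $\mathbf{K}_{1}$, $\mathbf{K}_{2}$ up to the characteristics — have already been granted in the discussion preceding the statement (in particular the remark that $\mathbf{K}$ is $n+1$ times continuously differentiable when $q\in C^{n}$). The computation above is therefore entirely routine and exactly parallel to the proof of the preceding proposition.
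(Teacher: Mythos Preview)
Your proof is correct and is exactly the argument the paper has in mind: it states only ``Similarly we obtain the following proposition,'' meaning one substitutes $t=-x$ into \eqref{IntEq for dtK}, uses the Goursat data $\mathbf{K}(z,-z)=h/2$, and then differentiates that same identity to get $\mathbf{K}_{1}(x,-x)=\mathbf{K}_{2}(x,-x)$.
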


Relations \eqref{K1(x,x)} and \eqref{K1(x,-x)} were already utilized in \cite{KNT} to estimate the error of the approximation of the derivatives of solutions of equation \eqref{SLhom}.

As can be seen from \eqref{Tf}, the definition of the transmutation operator requires the knowledge of its integral kernel $\mathbf{K}$ in the region $|t|\leq|x|\leq b$. However the integral kernel $\mathbf{K}$ is well-defined (via \eqref{GoursatTh1}--\eqref{GoursatTh2}) in the larger region $\overline{\mathbf{S}}:\ |x|\leq b,\ |t|\leq b$ and is continuously differentiable (twice for $q\in C^1[-b,b]$) there, see \cite{KrT2012}. For the rest of this paper we consider the integral kernel $\mathbf{K}$ to be defined in this larger domain $\overline{\mathbf{S}}$, it allows us to present many results in a simpler and more natural form. First of such results is the following proposition defining the inverse operator $\mathbf{T}_f^{-1}$.

\begin{proposition}[\cite{KrT2012}] \label{Prop Inverse}The inverse operator $\mathbf{T}_{f}^{-1}$
can be represented as the Volterra integral operator
\begin{equation*}
\mathbf{T}_{f}^{-1}u(x)=u(x)-\int_{-x}^{x}\mathbf{K}(t,x)u(t)\,dt.
\end{equation*}
\end{proposition}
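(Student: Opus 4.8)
The plan is to show that the operator on the right — denote it
$\mathbf{S}u(x):=u(x)+\int_{-x}^{x}L(x,t)u(t)\,dt$ with $L(x,t):=-\mathbf{K}(t,x)$ — satisfies $\mathbf{S}\,\mathbf{T}_{f}=I$ on $C[-b,b]$. Since $\mathbf{T}_{f}$ is a transmutation operator in the sense of Definition~\ref{DefTransmut} (Theorem~\ref{Th Transmutation}), hence a bijection of $C[-b,b]$ with continuous inverse, the identity $\mathbf{S}\,\mathbf{T}_{f}=I$ will immediately give $\mathbf{S}=\mathbf{T}_{f}^{-1}$. The heart of the matter is to prove that $\mathbf{S}$ maps every solution $u$ of \eqref{SLomega2} to the solution $v$ of the free equation $v''+\omega^{2}v=0$ with $v(0)=u(0)$ and $v'(0)=u'(0)-hu(0)$; this is exactly the inverse of the correspondence stated in Remark~\ref{RemTh}, so it yields $\mathbf{S}\,\mathbf{T}_{f}v=v$ for every solution $v$ of $v''+\omega^{2}v=0$ and every $\omega\in\mathbb{C}$, after which a density argument closes the proof.

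To establish this intertwining property I would differentiate $\mathbf{S}u$ twice, using the Leibniz rule for the moving endpoints $\pm x$, replace $\omega^{2}u(t)=q(t)u(t)-u''(t)$ in the integral term and integrate by parts twice, and insert $u''(x)=\bigl(q(x)-\omega^{2}\bigr)u(x)$. The boundary terms carrying $u'(\pm x)$ cancel identically and a short computation gives
\begin{align*}
(\mathbf{S}u)''(x)+\omega^{2}(\mathbf{S}u)(x)
&=\Bigl(q(x)+2\frac{d}{dx}L(x,x)\Bigr)u(x)+2\bigl(L_{1}(x,-x)-L_{2}(x,-x)\bigr)u(-x)\\
&\quad+\int_{-x}^{x}\bigl(L_{11}(x,t)-L_{22}(x,t)+q(t)L(x,t)\bigr)u(t)\,dt,
\end{align*}
where subscripts $1,2$ denote partial derivatives with respect to the first and the second argument. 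Hence it suffices that $L$ satisfy $\frac{d}{dx}L(x,x)=-\frac12q(x)$, that $L_{1}(x,-x)=L_{2}(x,-x)$, and that $L_{11}(x,t)-L_{22}(x,t)+q(t)L(x,t)=0$ on the triangle $|t|\le|x|\le b$.

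For $L(x,t)=-\mathbf{K}(t,x)$ these three requirements transform, respectively, into $\frac{d}{dx}\mathbf{K}(x,x)=\frac12q(x)$, which is immediate from \eqref{GoursatKh2}; into $\mathbf{K}_{1}(-x,x)=\mathbf{K}_{2}(-x,x)$, which is precisely relation \eqref{K1(x,-x)} with $x$ replaced by $-x$; and into $\mathbf{K}_{11}(t,x)-q(t)\mathbf{K}(t,x)=\mathbf{K}_{22}(t,x)$, which is the Goursat equation \eqref{GoursatKh1}. A one-line computation also shows $(\mathbf{S}u)(0)=u(0)$ and, using $\mathbf{K}(0,0)=h/2$, $(\mathbf{S}u)'(0)=u'(0)+2L(0,0)u(0)=u'(0)-hu(0)$. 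Thus, if $u=\mathbf{T}_{f}v$ with $v''+\omega^{2}v=0$, then by Remark~\ref{RemTh} the functions $\mathbf{S}u$ and $v$ solve the same initial value problem for $w''+\omega^{2}w=0$, whence $\mathbf{S}u=v$, i.e. $\mathbf{S}\,\mathbf{T}_{f}v=v$. Since the functions $\cos\omega x$, $\sin\omega x$ ($\omega\in\mathbb{C}$) span a dense subspace of $C[-b,b]$ and $\mathbf{S}\,\mathbf{T}_{f}$ is continuous there, we conclude $\mathbf{S}\,\mathbf{T}_{f}=I$ and therefore $\mathbf{S}=\mathbf{T}_{f}^{-1}$.

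I expect the only genuine obstacle to be the regularity assumption: the third requirement above invokes the second derivatives of $\mathbf{K}$, i.e. the pointwise Goursat equation \eqref{GoursatKh1}, which is guaranteed only for $q\in C^{1}[-b,b]$. For a merely continuous potential one should either approximate $q$ uniformly by $C^{1}$ potentials and pass to the limit, using the continuous dependence of the Goursat kernel — and hence of $\mathbf{T}_{f}$ and of its inverse — on $q$, or else replace the use of \eqref{GoursatKh1} by a direct manipulation of the integral equation \eqref{IntEq for K}. The other two requirements and the initial-value identities involve only first-order data of $\mathbf{K}$, which exist for any continuous $q$, so they present no difficulty.
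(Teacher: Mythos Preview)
The paper does not include a proof of this proposition: it is stated with a citation to \cite{KrT2012} and no argument is given here. So there is nothing to compare against in the present paper.

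Your argument is sound. The Leibniz/integration-by-parts computation is correct (the $u'(\pm x)$ boundary terms do indeed cancel against those coming from two integrations by parts of $\int L(x,t)u''(t)\,dt$), and the three conditions you extract on $L$ translate, via $L(x,t)=-\mathbf{K}(t,x)$, exactly into the Goursat data \eqref{GoursatKh2}, the relation \eqref{K1(x,-x)} (evaluated at $-x$), and the PDE \eqref{GoursatKh1} on the square $\overline{\mathbf{S}}$, all of which are available at this point in the paper. The initial-value check and the density argument are also fine: $\mathbf{S}\mathbf{T}_f$ is a bounded Volterra-type operator on $C[-b,b]$, and already the real trigonometric system is dense there. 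Your honest flagging of the regularity issue is appropriate; the $C^1$-approximation route you sketch works, since the kernel $\mathbf{K}$ (hence both $\mathbf{T}_f$ and $\mathbf{S}$) depends continuously on $q$ in the uniform norm via the integral equation \eqref{IntEq for K}. One small remark: when you invoke \eqref{GoursatKh1} at points $(t,x)$ with $|x|\ge |t|$ you are tacitly using that $\mathbf{K}$ is defined and satisfies the PDE on the full square $\overline{\mathbf{S}}$, which the paper explicitly grants just before the proposition; it is worth saying this in one line.
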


Together with the transmutation $\mathbf{T}_{f}$ it is often convenient to
consider another couple of operators enjoying the transmutation property
(\ref{transmutationT}) on subclasses of $C^{2}[-b,b]$ (as well as on
subclasses of $C^{2}[0,b]$), for details see \cite{Marchenko} and additionally
\cite{KT Obzor},
\[
T_{c}w(x)=w(x)+\int_{0}^{x}\mathbf{C}(x,t)w(t)dt
\]
and%
\[
T_{s}w(x)=w(x)+\int_{0}^{x}\mathbf{S}(x,t)w(t)dt
\]
with the kernels $\mathbf{C}$ and $\mathbf{S}$ related to the kernel
$\mathbf{K}$ by the equalities
\[
\mathbf{C}(x,t)=\mathbf{K}(x,t)+\mathbf{K}(x,-t)
\]
and
\[
\mathbf{S}(x,t)=\mathbf{K}(x,t)-\mathbf{K}(x,-t).
\]
The following statement is valid.

\begin{theorem}
[\cite{Marchenko}]\label{TcTsMapsSolutions copy(1)} Solutions $c(\omega,x)$
and $s(\omega,x)$ of equation \eqref{SLomega2} satisfying the initial
conditions
\begin{gather}
c(\omega,0)=1,\qquad c_{x}^{\prime}(\omega,0)=h\label{ICcos}\\
s(\omega,0)=0,\qquad s_{x}^{\prime}(\omega,0)=1 \label{ICsin}%
\end{gather}
can be represented in the form
\begin{equation*}
c(\omega,x)=\cos\omega x+\int_{0}^{x}\mathbf{C}(x,t)\cos\omega t\,dt
\end{equation*}
and
\begin{equation*}
s(\omega,x)=\frac{\sin\omega x}{\omega}+\int_{0}^{x}\mathbf{S}(x,t)\frac
{\sin\omega t}{\omega}\,dt.
\end{equation*}
\end{theorem}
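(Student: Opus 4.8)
\noindent\textit{Proof proposal.}
The plan is to realize $c(\omega,\cdot)$ and $s(\omega,\cdot)$ as the images under $\mathbf{T}_{f}$ of the elementary solutions $\cos\omega t$ and $\frac{\sin\omega t}{\omega}$ of the equation $v''+\omega^{2}v=0$, and then to fold the symmetric integral in \eqref{Tf} onto $[0,x]$ by the change of variable $t\mapsto -t$.

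First I would note that $\cos\omega t\in C^{2}[-b,b]$ solves $v''+\omega^{2}v=0$ with $v(0)=1$, $v'(0)=0$, so by Remark \ref{RemTh} the function $u(x):=\mathbf{T}_{f}[\cos\omega\cdot](x)$ is a solution of \eqref{SLomega2} with $u(0)=1$ and $u'(0)=0+h\cdot 1=h$. These are precisely the initial conditions \eqref{ICcos}, so by the uniqueness of the solution of the Cauchy problem for \eqref{SLomega2} we get $c(\omega,x)=\mathbf{T}_{f}[\cos\omega\cdot](x)$. Likewise, $\frac{\sin\omega t}{\omega}$ (understood as $t$ when $\omega=0$) solves $v''+\omega^{2}v=0$ with $v(0)=0$, $v'(0)=1$, hence $\mathbf{T}_{f}\bigl[\frac{\sin\omega\cdot}{\omega}\bigr]$ solves \eqref{SLomega2} with the initial data \eqref{ICsin}, and therefore $s(\omega,x)=\mathbf{T}_{f}\bigl[\frac{\sin\omega\cdot}{\omega}\bigr](x)$.

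Second, in the integral $\int_{-x}^{x}\mathbf{K}(x,t)\cos\omega t\,dt$ from \eqref{Tf} I would split the range as $\int_{-x}^{0}+\int_{0}^{x}$ and substitute $t\mapsto -t$ in the first piece; using the parity of the cosine this yields $\int_{0}^{x}\mathbf{K}(x,-t)\cos\omega t\,dt$, so that the two pieces combine into $\int_{0}^{x}\bigl(\mathbf{K}(x,t)+\mathbf{K}(x,-t)\bigr)\cos\omega t\,dt=\int_{0}^{x}\mathbf{C}(x,t)\cos\omega t\,dt$. The same computation for $\frac{\sin\omega t}{\omega}$, where the oddness of the sine produces a minus sign, gives $\int_{-x}^{x}\mathbf{K}(x,t)\frac{\sin\omega t}{\omega}\,dt=\int_{0}^{x}\bigl(\mathbf{K}(x,t)-\mathbf{K}(x,-t)\bigr)\frac{\sin\omega t}{\omega}\,dt=\int_{0}^{x}\mathbf{S}(x,t)\frac{\sin\omega t}{\omega}\,dt$. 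Substituting these into the explicit form \eqref{Tf} of $\mathbf{T}_{f}$ yields the two claimed representations.

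There is essentially no serious obstacle; the points that require a word of care are (i) that $\mathbf{K}(x,-t)$ appearing after the substitution is evaluated at a point with $|-t|\le|x|$, so it lies in the original domain of definition of the kernel in \eqref{Tf} and the rewriting is legitimate; (ii) the degenerate value $\omega=0$, which is covered by interpreting $\frac{\sin\omega t}{\omega}$ as its limit $t$; and (iii) that Remark \ref{RemTh} (equivalently, a direct computation of $u(0)$ and $u'(0)$ from \eqref{Tf} using $\mathbf{K}(0,0)=h/2$ from \eqref{GoursatKh2}) indeed delivers the shift $u'(0)=v'(0)+hv(0)$ in the initial data, which is exactly what is needed to match \eqref{ICcos} and \eqref{ICsin}.
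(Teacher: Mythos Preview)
Your proof is correct. The paper does not include its own proof of this theorem; it is stated with a citation to Marchenko and no argument is given. Your approach---identifying $c(\omega,\cdot)$ and $s(\omega,\cdot)$ as $\mathbf{T}_f$-images of $\cos\omega t$ and $\frac{\sin\omega t}{\omega}$ via Remark~\ref{RemTh}, then folding the integral over $[-x,x]$ onto $[0,x]$ using the parity of cosine and sine together with the definitions $\mathbf{C}(x,t)=\mathbf{K}(x,t)+\mathbf{K}(x,-t)$ and $\mathbf{S}(x,t)=\mathbf{K}(x,t)-\mathbf{K}(x,-t)$---is exactly the natural argument in the framework set up in Section~\ref{Subsection Transmutation Operators}, and all steps are justified.
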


The following important mapping property is a corollary of Theorem
\ref{ThGenSolSturmLiouville}.

\begin{theorem}
[\cite{CKT}, \cite{KrT2012}]\label{Th Transmute}Let $q$ be a continuous
complex valued function of an independent real variable $x\in\lbrack-b,b]$ and
$f$ be a particular solution of \eqref{SLhom} such that $f\neq0$ on $[-b,b]$
and normalized as $f(0)=1$. Denote $h:=f^{\prime}(0)\in\mathbb{C}$. Then
\begin{equation}
\mathbf{T}_{f}\left[  x^{k}\right]  =\varphi_{k}(x)\qquad\text{for any}%
\ k\in\mathbb{N}_{0}. \label{mapping powers 1}%
\end{equation}

\end{theorem}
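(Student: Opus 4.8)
The plan is to prove $\mathbf{T}_f[x^k]=\varphi_k(x)$ by showing that both sides coincide as solutions of the same initial value problem, invoking the uniqueness of solutions to second-order linear ODEs with appropriate Cauchy data. First I would recall the key structural facts: the transmutation $\mathbf{T}_f$ intertwines $-d^2/dx^2$ with $-d^2/dx^2+q$ (Theorem~\ref{Th Transmutation}), and more precisely, by Remark~\ref{RemTh} (applied to $\omega=0$, or more generally to polynomial inputs via the spectral parameter expansion) it carries a function $v$ into a function $u$ solving $u''-qu = v''$ with $u(0)=v(0)$, $u'(0)=v'(0)+hv(0)$. Meanwhile, Theorem~\ref{ThGenSolSturmLiouville} together with Remark~\ref{RemInitialValues} tells us that the formal powers $\varphi_k$ — evaluated with base point $x_0=0$ and with $f$ normalized by $f(0)=1$, $f'(0)=h$ — are precisely the building blocks whose $\lambda$-series reproduce solutions of $y''-qy=\lambda y$. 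The whole proof hinges on translating the ODE-intertwining into a statement about how $\mathbf{T}_f$ acts on each monomial $x^k$ individually.

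The concrete route I would take is via the SPPS series. Consider $v_1(\lambda,t)=\sum_{k} \frac{\lambda^k}{(2k)!} t^{2k} = \cosh(\sqrt{\lambda}\,t)$ and $v_2(\lambda,t)=\sum_k \frac{\lambda^k}{(2k+1)!} t^{2k+1} = \sinh(\sqrt{\lambda}\,t)/\sqrt{\lambda}$, the solutions of $v''=\lambda v$ with $v_1(0)=1, v_1'(0)=0$ and $v_2(0)=0, v_2'(0)=1$. By Remark~\ref{RemTh} (extended from $\omega^2=-\lambda$), $\mathbf{T}_f[v_1(\lambda,\cdot)]$ solves $y''-qy=\lambda y$ with initial data $y(0)=1$, $y'(0)=h$, and $\mathbf{T}_f[v_2(\lambda,\cdot)]$ solves the same equation with $y(0)=0$, $y'(0)=1$. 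By Theorem~\ref{ThGenSolSturmLiouville} and Remark~\ref{RemInitialValues}, these are exactly $y_1=\sum_k \frac{\lambda^k}{(2k)!}\varphi_{2k}$ and $y_2=\sum_k \frac{\lambda^k}{(2k+1)!}\varphi_{2k+1}$. Now I would invoke continuity of $\mathbf{T}_f$ on $C[-b,b]$ together with the uniform convergence of the series $v_1,v_2$ (for fixed $\lambda$) to interchange $\mathbf{T}_f$ with the summation, obtaining $\sum_k \frac{\lambda^k}{(2k)!}\mathbf{T}_f[t^{2k}] = \sum_k \frac{\lambda^k}{(2k)!}\varphi_{2k}$ and similarly for the odd part. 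Since these identities hold for all $\lambda$ in a neighborhood of $0$ (indeed all $\lambda\in\mathbb{C}$) and both sides are analytic in $\lambda$, comparing coefficients of $\lambda^k$ yields $\mathbf{T}_f[t^{2k}]=\varphi_{2k}$ and $\mathbf{T}_f[t^{2k+1}]=\varphi_{2k+1}$, which is \eqref{mapping powers 1}.

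The main obstacle, and the step requiring care, is the justification of passing the continuous operator $\mathbf{T}_f$ through the infinite sum and of the subsequent coefficient comparison. For the first, one needs that the partial sums of $\cosh(\sqrt{\lambda}\,t)$ converge to it in the $C[-b,b]$ norm (which is standard for fixed $\lambda$), so that $\mathbf{T}_f$ of the limit equals the limit of $\mathbf{T}_f$ of the partial sums; then one must identify $\lim_N \sum_{k\le N}\frac{\lambda^k}{(2k)!}\varphi_{2k}$ with $y_1$, which is exactly the content of Theorem~\ref{ThGenSolSturmLiouville}. For the coefficient comparison, the cleanest argument is that $\lambda \mapsto \mathbf{T}_f[t^{2k}]$ is $\lambda$-independent while the whole expansion is an entire function of $\lambda$ with values in $C[-b,b]$; two entire $C[-b,b]$-valued functions agreeing on a neighborhood of the origin have equal Taylor coefficients. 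An alternative, perhaps more elementary, route avoiding series altogether would be a direct induction on $k$: using the Goursat characterization \eqref{GoursatTh1}--\eqref{GoursatTh2} of $\mathbf{K}$ one can verify that $\mathbf{T}_f[t^k]$ satisfies $(\mathbf{T}_f[t^k])'' - q\,\mathbf{T}_f[t^k] = k(k-1)\mathbf{T}_f[t^{k-2}]$ with the correct initial values, and that $\varphi_k$ satisfies the same recursion — this is really the same uniqueness argument unpacked one power at a time, and either presentation is acceptable.
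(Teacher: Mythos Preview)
Your proposal is correct and follows precisely the route the paper itself signals: the theorem is stated without proof (citing \cite{CKT}, \cite{KrT2012}) but is explicitly described as ``a corollary of Theorem~\ref{ThGenSolSturmLiouville}'', and the remark immediately following it spells out that the SPPS representations \eqref{u1u2} are the images under $\mathbf{T}_f$ of the Taylor expansions of $\cosh\sqrt{\lambda}x$ and $\frac{1}{\sqrt{\lambda}}\sinh\sqrt{\lambda}x$ --- exactly your argument. The justifications you give for interchanging $\mathbf{T}_f$ with the series and for the coefficient comparison are adequate; your alternative inductive route via the recursion $(\mathbf{T}_f[t^k])''-q\,\mathbf{T}_f[t^k]=k(k-1)\mathbf{T}_f[t^{k-2}]$ is also valid and is in fact closer to how the original references \cite{CKT}, \cite{KrT2012} present it.
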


\begin{remark}
The mapping property \eqref{mapping powers 1} of the transmutation operator
reveals that the SPPS representations \eqref{u1u2} from Theorem
\ref{ThGenSolSturmLiouville} are nothing but the images of Taylor expansions
of the functions $\cosh\sqrt{\lambda}x$ and $\frac{1}{\sqrt{\lambda}}%
\sinh\sqrt{\lambda}x$ under the action of $\mathbf{T}_{f}$.
\end{remark}

In what follows we assume that $f\neq0$ on $[-b,b]$, $f(0)=1$ and denote
$h:=f^{\prime}(0)\in\mathbb{C}$.

We introduce the following two systems of functions
\begin{align}
\mathbf{c}_{m}(x)&=\sum_{\text{even }k=0}^{m}\binom{m}{k}x^{k}\varphi
_{m-k}(x),\quad m=1,2,\ldots\quad\text{ and }\mathbf{c}_{0}(x)=u_{0}(x,x)=f(x),
\label{cm}\\
\mathbf{s}_{m}(x)&=\sum_{\text{odd }k=1}^{m}\binom{m}{k}x^{k}\varphi
_{m-k}(x),\quad m=1,2,\ldots\quad\text{ and }\mathbf{s}_{0}\equiv0. \label{sm}
\end{align}

\begin{example}
In a special case when $f\equiv1$ we obtain that
$\mathbf{c}_{m}(x)=\mathbf{s}_{m}(x)=2^{m-1}x^{m}$, $m=1,2,\ldots$.
\end{example}

The systems of functions $\left\{  \mathbf{c}_{m}\right\}  _{m=0}^{\infty}$
and $\left\{  \mathbf{s}_{m}\right\}  _{m=0}^{\infty}$ arose in \cite{KT
Transmut} as traces of so-called generalized wave polynomials which form a
complete system of solutions of (\ref{GoursatKh1}) and are used in \cite{KT
Transmut} for uniform approximation of the transmutation kernels $\mathbf{K}$,
$\mathbf{C}$ and $\mathbf{S}$.

\begin{definition}[\cite{KKTT}] The following functions
\begin{equation*}
u_{0}=\varphi_{0}(x),\quad u_{2m-1}(x,t)=\sum_{\text{even }k=0}^{m}\binom
{m}{k}\varphi_{m-k}(x)t^{k},\quad u_{2m}(x,t)=\sum_{\text{odd }k=1}^{m}%
\binom{m}{k}\varphi_{m-k}(x)t^{k},
\end{equation*}
are called generalized wave polynomials (the wave polynomials are introduced
below, in Example \ref{Ex Wave polynomials}). The following parity relations
hold for the generalized wave polynomials.
\begin{equation*}
u_{0}(x,-t)=u_{0}(x,t),\qquad u_{2n-1}(x,-t)=u_{2n-1}(x,t),\qquad
u_{2n}(x,-t)=-u_{2n}(x,t).
\end{equation*}
\end{definition}

\begin{example}
\label{Ex Wave polynomials}In a special case when $f\equiv1$ we obtain
that $\varphi_{k}(x)=x^{k}$, $k\in\mathbb{N}_{0}$ and $u_{k}(x,t)=p_{k}(x,t)$
where $p_{k}$ are wave polynomials \cite[Proposition 1]{KKTT} defined by the
equalities
\begin{align*}
p_{0}(x,t)  & =1,\qquad p_{2m-1}(x,t)=\sum_{\mathrm{even}\text{ }k=0}^{m}%
\binom{m}{k}x^{m-k}t^{k}=\frac{1}{2}\bigl(  \left(  x+t\right)  ^{m}+\left(
x-t\right)  ^{m}\bigr)  ,\\
p_{2m}(x,t)  & =\sum_{\mathrm{odd}\text{ }k=1}^{m}\binom{m}{k}x^{m-k}%
t^{k}=\frac{1}{2}\bigl(  \left(  x+t\right)  ^{m}-\left(  x-t\right)
^{m}\bigr)  .
\end{align*}
\end{example}

\begin{remark}\label{Rem Tf pn}
From Theorem \ref{Th Transmute} one obtains that $u_{n}=\mathbf{T}_{f}\left[
p_{n}\right]  $.
\end{remark}

Recall that $\overline{\mathbf{S}}$ denotes a closed square on the plane $(x,t)$ with a
diagonal joining the endpoints $(b,b)$ and $(-b,-b)$.

\begin{theorem}[\cite{KT AnalyticApprox}]\label{Th Kapprox} Let the complex numbers
$a_{0},\ldots,a_{N}$ and $b_{1},\ldots,b_{N}$ be such that%
\begin{equation}\label{KxxErr}
\left\vert \frac{h}{2}+\frac{1}{4}\int_{0}^{x}q(s)ds-\sum_{n=0}^{N}%
a_{n}\mathbf{c}_{n}(x)\right\vert <\varepsilon_{1}%
\end{equation}
and
\begin{equation}\label{KxmxErr}
\left\vert \frac{1}{4}\int_{0}^{x}q(s)ds-\sum_{n=1}^{N}b_{n}\mathbf{s}%
_{n}(x)\right\vert <\varepsilon_{2}%
\end{equation}
for every $x\in\lbrack-b,b]$. Then the kernel $\mathbf{K}(x,t)$ is
approximated by the function%
\begin{equation}
K_{N}(x,t)=a_{0}u_{0}(x,t)+\sum_{n=1}^{N}a_{n}u_{2n-1}(x,t)+\sum_{n=1}%
^{N}b_{n}u_{2n}(x,t)\label{K(x,t)}%
\end{equation}
in such a way that for every $(x,t)\in\overline{\mathbf{S}}$ the inequality
holds
\begin{equation*}
\bigl|\mathbf{K}(x,t)-K_{N}(x,t)\bigr|\leq\varepsilon
\end{equation*}
where $\varepsilon\geq0$ depends on $\varepsilon_{1}$, $\varepsilon_{2}$ and
$q$.
\end{theorem}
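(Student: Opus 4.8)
\emph{Sketch of the intended proof.} The plan is to control the error function $W_N:=\mathbf K-K_N$ through the Goursat problem it inherits, rather than through the transmutation operators $\mathbf T_f$, $\mathbf T_f^{-1}$. First I would pass to the characteristic coordinates $u=\tfrac{x+t}{2}$, $v=\tfrac{x-t}{2}$ and set $\widetilde W_N(u,v):=W_N(u+v,u-v)$. Since $\mathbf K(u+v,u-v)=\mathbf H(u,v)$ solves the Goursat equation \eqref{GoursatTh1} and each generalized wave polynomial $u_n$ solves \eqref{GoursatKh1}, so that its characteristic-coordinate version solves \eqref{GoursatTh1} as well, the function $\widetilde W_N$ satisfies $\partial_u\partial_v\widetilde W_N=q(u+v)\widetilde W_N$ on the image of $\overline{\mathbf S}$, which is the square $\{|u|+|v|\le b\}$. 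Integrating this identity over a rectangle $[0,u]\times[0,v]$ lying in one of the four triangles into which the coordinate axes split that square, one obtains the two-dimensional Volterra integral equation
\begin{equation*}
\widetilde W_N(u,v)=\widetilde W_N(u,0)+\widetilde W_N(0,v)-\widetilde W_N(0,0)+\int_0^u\!\!\int_0^v q(\xi+\eta)\,\widetilde W_N(\xi,\eta)\,d\eta\,d\xi
\end{equation*}
(with the evident changes of the integration limits on the triangles abutting the negative half-axes). It is essential here that \eqref{GoursatTh1}--\eqref{GoursatTh2} govern $\mathbf H$ on the whole of $\overline{\mathbf S}$ with no smoothness assumption on $q$, which is why one should not work with \eqref{GoursatKh1} in the original variables.

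Next I would read off the boundary data of $\widetilde W_N$, that is, the values of $W_N$ on the diagonals $t=\pm x$. Using the parity relations $u_0(x,-t)=u_0(x,t)$, $u_{2n-1}(x,-t)=u_{2n-1}(x,t)$, $u_{2n}(x,-t)=-u_{2n}(x,t)$ together with the trace identities $u_0(x,\pm x)=f(x)=\mathbf c_0(x)$, $u_{2n-1}(x,\pm x)=\mathbf c_n(x)$, $u_{2n}(x,x)=\mathbf s_n(x)$, one gets
\begin{align*}
K_N(x,x)&=\sum_{n=0}^{N}a_n\mathbf c_n(x)+\sum_{n=1}^{N}b_n\mathbf s_n(x),\\
K_N(x,-x)&=\sum_{n=0}^{N}a_n\mathbf c_n(x)-\sum_{n=1}^{N}b_n\mathbf s_n(x).
\end{align*}
Since \eqref{GoursatKh2} reads $\mathbf K(x,x)=\bigl(\tfrac h2+\tfrac14 Q(x)\bigr)+\tfrac14 Q(x)$ and $\mathbf K(x,-x)=\bigl(\tfrac h2+\tfrac14 Q(x)\bigr)-\tfrac14 Q(x)$, subtracting and invoking \eqref{KxxErr} and \eqref{KxmxErr} yields $|W_N(x,\pm x)|<\varepsilon_1+\varepsilon_2$ for every $x\in[-b,b]$, i.e.\ $|\widetilde W_N(u,0)|<\varepsilon_1+\varepsilon_2$ and $|\widetilde W_N(0,v)|<\varepsilon_1+\varepsilon_2$ for $|u|,|v|\le b$.

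Finally I would close the argument with the stability of the Goursat problem. On the triangle $\{u,v\ge0,\ u+v\le b\}$ the integral equation gives $|\widetilde W_N(u,v)|\le 3(\varepsilon_1+\varepsilon_2)+\|q\|_{C[-b,b]}\int_0^u\!\int_0^v|\widetilde W_N|$, and iterating (the familiar successive-approximations bound for a two-dimensional Volterra kernel) yields $|\widetilde W_N(u,v)|\le 3(\varepsilon_1+\varepsilon_2)\sum_{j\ge0}(\|q\|_{C[-b,b]}b^{2})^{j}/(j!)^{2}\le 3(\varepsilon_1+\varepsilon_2)\,e^{\|q\|_{C[-b,b]}b^{2}}$; the three remaining triangles are handled in the same way. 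Transforming back to the variables $(x,t)$ establishes $|\mathbf K(x,t)-K_N(x,t)|\le\varepsilon$ on $\overline{\mathbf S}$ with $\varepsilon:=3(\varepsilon_1+\varepsilon_2)\,e^{\|q\|_{C[-b,b]}b^{2}}$, which depends only on $\varepsilon_1$, $\varepsilon_2$ and $q$ (and $b$), as claimed. The only point that will demand genuine care is the first one — confirming that $\widetilde W_N$ satisfies the above integral equation on the \emph{whole} enlarged square $\overline{\mathbf S}$ (not merely on the physical region $|t|\le|x|$) under the sole hypothesis $q\in C[-b,b]$; once this is secured, the boundary-data bookkeeping and the Gronwall-type estimate are routine.
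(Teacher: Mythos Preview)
Your proposal is correct and, in fact, coincides with the approach the present paper itself takes in Section~\ref{Sect 8} when re-proving this result in the restricted setting (Theorem~\ref{Th Kapprox Restr}): pass to characteristic coordinates, observe that the error $\widetilde W_N$ solves the Goursat integral equation with small data on the characteristics (bounded by $\varepsilon_1+\varepsilon_2$ thanks to the trace identities for $u_n$), and conclude via successive approximations. The only cosmetic difference is that the paper records the sharper constant $3(\varepsilon_1+\varepsilon_2)I_0(b\sqrt{M})$ with $M=\|q\|_{C}$, whereas you use the cruder majorant $e^{Mb^2}$; both are acceptable since the statement only asks that $\varepsilon$ depend on $\varepsilon_1$, $\varepsilon_2$, $q$.

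It is worth noting, however, that the theorem as stated here is a citation from \cite{KT AnalyticApprox}, and the introduction explicitly contrasts the Section~\ref{Sect 8} argument with the original one: ``The proofs are based on the well-posedness of the Goursat problem and does not involve neither inverse operators nor pseudoanalytic function theory.'' So the proof in \cite{KT AnalyticApprox} evidently went through the operator $G$ (or $\mathbf T_f^{-1}$) and the associated machinery, while your route --- identical to the one this paper promotes as an improvement --- is more elementary and self-contained, needing only continuity of $q$ and the standard Volterra iteration. Your one flagged concern (that the Goursat integral equation for $\mathbf H$ holds on all of $\overline{\mathbf S}$ for merely continuous $q$) is already addressed in the paper: see the discussion after Proposition~\ref{Prop Inverse} and \cite{KrT2012}.
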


\begin{remark}[\cite{KT Transmut}]\label{Rem K as a series} If the functions $\frac{h}%
{2}+\frac{1}{4}\int_{0}^{x}q(s)ds$ and $\frac{1}{4}\int_{0}^{x}q(s)ds$ admit
uniformly convergent on $[-b,b]$ respective series expansions
\[
\frac{h}{2}+\frac{1}{4}\int_{0}^{x}q(s)ds=\sum_{n=0}^{\infty}a_{n}%
\mathbf{c}_{n}(x)\qquad\text{and}\qquad\frac{1}{4}\int_{0}^{x}q(s)ds=\sum
_{n=1}^{\infty}b_{n}\mathbf{s}_{n}(x),
\]
then the kernel $\mathbf{K}$ admits a uniformly convergent in $\overline
{\mathbf{S}}$ series representation
\begin{equation}
\mathbf{K}(x,t)=a_{0}u_{0}(x,t)+\sum_{n=1}^{\infty}a_{n}u_{2n-1}%
(x,t)+\sum_{n=1}^{\infty}b_{n}u_{2n}(x,t).\label{K as a series}%
\end{equation}
\end{remark}

In \cite{KT AnalyticApprox} the uniform approximation of the kernel from
Theorem \ref{Th Kapprox} was applied to obtain a result on the uniform
approximation (with respect to $x$ and $\omega$) of the solutions
$c(\omega,x)$ and $s(\omega,x)$ of equation \eqref{SLomega2}.

\begin{theorem}
[\cite{KT AnalyticApprox}]\label{ThApproxCS}The solutions $c(\omega,x)$ and
$s(\omega,x)$ of equation \eqref{SLomega2} satisfying \eqref{ICcos} and
\eqref{ICsin} respectively can be approximated by the functions
\begin{equation}
c_{N}(\omega,x)=\cos\omega x+2\sum_{n=0}^{N}a_{n}\sum_{\text{even }k=0}%
^{n}\binom{n}{k}\varphi_{n-k}(x)\int_{0}^{x}t^{k}\cos\omega t\,dt \label{cN}%
\end{equation}
and
\begin{equation}
s_{N}(\omega,x)=\frac{1}{\omega}\left(  \sin\omega x+2\sum_{n=1}^{N}b_{n}%
\sum_{\text{odd }k=1}^{n}\binom{n}{k}\varphi_{n-k}(x)\int_{0}^{x}t^{k}%
\sin\omega t\,dt\right)  \label{sN}%
\end{equation}
where the coefficients $\left\{  a_{n}\right\}  _{n=0}^{N}$ and $\left\{
b_{n}\right\}  _{n=1}^{N}$ are the same as in Theorem \ref{Th Kapprox} and the following estimates hold
\begin{equation*}
\left\vert c(\omega,x)-c_{N}(\omega,x)\right\vert \leq\frac{\varepsilon
\sinh(Cx)}{C} 
\end{equation*}
and%
\begin{equation*}
\left\vert s(\omega,x)-s_{N}(\omega,x)\right\vert \leq\frac{\varepsilon
\sinh(Cx)}{\left\vert \omega\right\vert C} 
\end{equation*}
for any $\omega\in\mathbb{C}$, $\omega\neq0$ belonging to the strip
$\left\vert \operatorname{Im}\omega\right\vert \leq C$, $C\geq0$, where
$\varepsilon\geq0$ depends on $\varepsilon_{1}$, $\varepsilon_{2}$ and $q$.
\end{theorem}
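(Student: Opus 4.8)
The plan is to recognize $c_{N}(\omega,\cdot)$ and $s_{N}(\omega,\cdot)$ as the images of $\cos\omega x$ and $\omega^{-1}\sin\omega x$ under the \emph{truncated} transmutations built from the wave-polynomial approximation $K_{N}$, and then to subtract the exact representations of Theorem~\ref{TcTsMapsSolutions copy(1)} while controlling the difference of the kernels by Theorem~\ref{Th Kapprox}. First I would put
\[ C_{N}(x,t):=K_{N}(x,t)+K_{N}(x,-t),\qquad S_{N}(x,t):=K_{N}(x,t)-K_{N}(x,-t) \]
with $K_{N}$ as in \eqref{K(x,t)}. Using the parity relations $u_{0}(x,-t)=u_{0}(x,t)$, $u_{2n-1}(x,-t)=u_{2n-1}(x,t)$, $u_{2n}(x,-t)=-u_{2n}(x,t)$ one obtains $C_{N}(x,t)=2a_{0}u_{0}(x,t)+2\sum_{n=1}^{N}a_{n}u_{2n-1}(x,t)$ and $S_{N}(x,t)=2\sum_{n=1}^{N}b_{n}u_{2n}(x,t)$; inserting the explicit expressions for $u_{2n-1}$ and $u_{2n}$ and comparing with \eqref{cN} and \eqref{sN} shows that
\[ c_{N}(\omega,x)=\cos\omega x+\int_{0}^{x}C_{N}(x,t)\cos\omega t\,dt,\qquad s_{N}(\omega,x)=\frac{1}{\omega}\Bigl(\sin\omega x+\int_{0}^{x}S_{N}(x,t)\sin\omega t\,dt\Bigr). \]
Since $\mathbf{C}(x,t)=\mathbf{K}(x,t)+\mathbf{K}(x,-t)$ and $\mathbf{S}(x,t)=\mathbf{K}(x,t)-\mathbf{K}(x,-t)$, these are precisely the representations of Theorem~\ref{TcTsMapsSolutions copy(1)} with $\mathbf{K}$ replaced by $K_{N}$.

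Subtracting, for $x\in[0,b]$ one gets
\[ c(\omega,x)-c_{N}(\omega,x)=\int_{0}^{x}\bigl(\mathbf{C}(x,t)-C_{N}(x,t)\bigr)\cos\omega t\,dt,\qquad s(\omega,x)-s_{N}(\omega,x)=\frac{1}{\omega}\int_{0}^{x}\bigl(\mathbf{S}(x,t)-S_{N}(x,t)\bigr)\sin\omega t\,dt. \]
Here $\mathbf{C}-C_{N}=(\mathbf{K}-K_{N})(x,t)+(\mathbf{K}-K_{N})(x,-t)$ and $\mathbf{S}-S_{N}=(\mathbf{K}-K_{N})(x,t)-(\mathbf{K}-K_{N})(x,-t)$, and for $0\le t\le x\le b$ both $(x,t)$ and $(x,-t)$ lie in $\overline{\mathbf{S}}$; therefore Theorem~\ref{Th Kapprox} yields $\bigl|\mathbf{C}(x,t)-C_{N}(x,t)\bigr|\le\varepsilon$ and $\bigl|\mathbf{S}(x,t)-S_{N}(x,t)\bigr|\le\varepsilon$ on the triangle $0\le t\le x\le b$, with $\varepsilon\ge0$ depending only on $\varepsilon_{1}$, $\varepsilon_{2}$ and $q$ (it is the constant of Theorem~\ref{Th Kapprox}, up to the harmless numerical factor coming from the two summands).

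It remains to bound the trigonometric factors uniformly over the strip. Writing $\omega=\sigma+i\tau$, for $t\ge0$ one has the elementary identities $|\cos\omega t|^{2}=\cosh^{2}(\tau t)-\sin^{2}(\sigma t)$ and $|\sin\omega t|^{2}=\sinh^{2}(\tau t)+\sin^{2}(\sigma t)$, hence $|\cos\omega t|\le\cosh(Ct)$ and $|\sin\omega t|\le\cosh(Ct)$ whenever $|\tau|\le C$. Plugging these into the two error displays and using $\int_{0}^{x}\cosh(Ct)\,dt=\sinh(Cx)/C$ gives
\[ |c(\omega,x)-c_{N}(\omega,x)|\le\frac{\varepsilon\sinh(Cx)}{C},\qquad |s(\omega,x)-s_{N}(\omega,x)|\le\frac{\varepsilon\sinh(Cx)}{|\omega|\,C}, \]
which are the asserted estimates (for $C=0$ the quotient $\sinh(Cx)/C$ is understood as $x$; for $x\in[-b,b]$ one replaces $x$ by $|x|$).

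The proof is essentially bookkeeping, so I expect no serious obstacle. The two points that require some care are: (i) transferring the estimate of Theorem~\ref{Th Kapprox}, stated for $\mathbf{K}-K_{N}$ on $\overline{\mathbf{S}}$, to $\mathbf{C}-C_{N}$ and $\mathbf{S}-S_{N}$ on the triangle $0\le t\le x\le b$ through the parity identities; and (ii) ensuring the estimate is uniform in $\omega$ over the whole strip $|\operatorname{Im}\omega|\le C$ and not merely on the real axis --- this is exactly what the inequalities $|\cos\omega t|,\,|\sin\omega t|\le\cosh(|\operatorname{Im}\omega|\,t)$ for $t\ge0$ provide.
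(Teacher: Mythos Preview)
Your argument is correct and is the natural one: recognize $c_N$ and $s_N$ as the images of $\cos\omega x$ and $\omega^{-1}\sin\omega x$ under the operators $T_c$ and $T_s$ with $\mathbf{K}$ replaced by $K_N$, subtract the exact representations of Theorem~\ref{TcTsMapsSolutions copy(1)}, and bound the kernel difference via Theorem~\ref{Th Kapprox} together with the elementary strip estimates $|\cos\omega t|,\,|\sin\omega t|\le\cosh(|\operatorname{Im}\omega|\,t)$.

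Note, however, that this paper does \emph{not} give a proof of Theorem~\ref{ThApproxCS}; the result is quoted from \cite{KT AnalyticApprox} as background material, so there is no in-paper proof to compare against. Your derivation is precisely the intended one (and indeed the one carried out in the cited reference): the parity decomposition of $K_N$ into $C_N$ and $S_N$, the identification of \eqref{cN}--\eqref{sN} with the truncated $T_c$ and $T_s$, and the integration of $\cosh(Ct)$ to produce $\sinh(Cx)/C$ are all exactly as expected. The only cosmetic point is that the triangle-inequality step yields $|\mathbf{C}-C_N|\le 2\varepsilon$ and $|\mathbf{S}-S_N|\le 2\varepsilon$, but since the statement allows $\varepsilon$ to depend on $\varepsilon_1,\varepsilon_2,q$, absorbing the factor $2$ is harmless, as you observe.
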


The existence of the appropriate number $N$ and coefficients $\left\{
a_{n}\right\}  _{n=0}^{N}$ and $\left\{  b_{n}\right\}  _{n=1}^{N}$ required
in (\ref{KxxErr}) and (\ref{KxmxErr}) is established by the following statement, see also Section \ref{Sect ConvEstimates}.

\begin{proposition}[\cite{KT AnalyticApprox}] The systems of functions $\left\{  \mathbf{c}%
_{n}\right\}  _{n=0}^{\infty}$ and $\left\{  \mathbf{s}_{n}\right\}
_{n=1}^{\infty}$ are linearly independent and complete in $C^{1}[-b,b]$ and
$C_{0}^{1}[-b,b]$, respectively, with respect to the maximum norm. Here
$C_{0}^{n}[-b,b]$ denotes a subspace of $C^{n}[-b,b]$ consisting of functions
vanishing in the origin.
\end{proposition}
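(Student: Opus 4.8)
The plan is to treat the two assertions by different standard methods: linear independence follows from a comparison of leading terms at the origin, and completeness from a reduction to the Weierstrass approximation theorem through an invertible integral operator.

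\emph{Linear independence.} Since the recursive integrals are based at $x_{0}=0$ and $f(0)=1$, a one-line induction gives $X^{(k)}(x)=x^{k}\bigl(1+o(1)\bigr)$ and $\widetilde X^{(k)}(x)=x^{k}\bigl(1+o(1)\bigr)$ as $x\to0$, hence $\varphi_{k}(x)=x^{k}\bigl(1+o(1)\bigr)$. Substituting this into \eqref{cm} and \eqref{sm} and using $\sum_{\text{even }k=0}^{m}\binom{m}{k}=\sum_{\text{odd }k=1}^{m}\binom{m}{k}=2^{m-1}$ for $m\ge1$, I get $\mathbf{c}_{m}(x)=2^{m-1}x^{m}+o(x^{m})$ and $\mathbf{s}_{m}(x)=2^{m-1}x^{m}+o(x^{m})$ as $x\to0$, while $\mathbf{c}_{0}(0)=f(0)=1$. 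Therefore the functions $\mathbf{c}_{0},\mathbf{c}_{1},\mathbf{c}_{2},\dots$ (respectively $\mathbf{s}_{1},\mathbf{s}_{2},\dots$) have pairwise distinct orders of vanishing at $0$, and comparing the lowest-order term of a vanishing finite linear combination forces all coefficients to be zero.

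\emph{Completeness.} The idea is to realize $x^{m}\mapsto\mathbf{c}_{m}$ as a bounded, boundedly invertible operator on $C[-b,b]$. Comparing \eqref{cm} with the definition of $u_{2m-1}$ we have $\mathbf{c}_{m}(x)=u_{2m-1}(x,x)$ for $m\ge1$, and by Remark \ref{Rem Tf pn} and Theorem \ref{Th Transmute}, $u_{2m-1}(x,t)=p_{2m-1}(x,t)+\int_{-x}^{x}\mathbf{K}(x,s)p_{2m-1}(s,t)\,ds$; since $p_{2m-1}(s,x)=\tfrac12\bigl((s+x)^{m}+(s-x)^{m}\bigr)$ and $p_{2m-1}(x,x)=2^{m-1}x^{m}$,
\begin{equation*}
\mathbf{c}_{m}(x)=2^{m-1}x^{m}+\tfrac12\int_{-x}^{x}\mathbf{K}(x,s)\bigl[(s+x)^{m}+(s-x)^{m}\bigr]\,ds .
\end{equation*}
Substituting $s=2\sigma-x$ in the first integral and $s=x-2\sigma$ in the second, collecting the terms and using $\mathbf{C}(x,t)=\mathbf{K}(x,t)+\mathbf{K}(x,-t)$, $\mathbf{S}(x,t)=\mathbf{K}(x,t)-\mathbf{K}(x,-t)$ to absorb the parity-dependent sign, this becomes $2^{1-m}\mathbf{c}_{m}(x)=x^{m}+2\int_{0}^{x}\mathbf{C}(x,2\sigma-x)\sigma^{m}\,d\sigma$ for even $m$ and $2^{1-m}\mathbf{c}_{m}(x)=x^{m}+2\int_{0}^{x}\mathbf{S}(x,2\sigma-x)\sigma^{m}\,d\sigma$ for odd $m$. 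Denoting by $P_{\pm}$ the bounded projections onto even/odd functions, the operator
\begin{equation*}
G_{1}w(x)=w(x)+2\int_{0}^{x}\mathbf{C}(x,2\sigma-x)(P_{+}w)(\sigma)\,d\sigma+2\int_{0}^{x}\mathbf{S}(x,2\sigma-x)(P_{-}w)(\sigma)\,d\sigma
\end{equation*}
thus satisfies $G_{1}[x^{m}]=2^{1-m}\mathbf{c}_{m}$ for every $m\ge1$, maps $C[-b,b]$ into itself and preserves $C_{0}[-b,b]$ (the continuous functions vanishing at $0$). Writing $G_{1}=I+V$, noting that $V$ has continuous kernels on the relevant triangle and obeys $|Vw(x)|\le C\int_{0}^{|x|}\max_{|\tau|\le\sigma}|w(\tau)|\,d\sigma$, one iterates this to $\|V^{n}\|\le(Cb)^{n}/n!$, so that $G_{1}^{-1}=\sum_{n\ge0}(-V)^{n}$ converges in operator norm and $G_{1}$ is a linear homeomorphism of $C[-b,b]$, and of $C_{0}[-b,b]$. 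Since $\{x^{m}\}_{m\ge1}$ is complete in $C_{0}[-b,b]$ by the Weierstrass theorem, $\{\mathbf{c}_{m}\}_{m\ge1}$ is complete in $C_{0}[-b,b]$; adjoining $\mathbf{c}_{0}=f$, which does not vanish at $0$, yields completeness of $\{\mathbf{c}_{m}\}_{m\ge0}$ in $C[-b,b]$, and hence in $C^{1}[-b,b]$ with respect to the maximum norm, because $C^{1}$ is dense in $C$ in that norm. For $\{\mathbf{s}_{m}\}_{m\ge1}$ one repeats the computation starting from $p_{2m}(s,x)=\tfrac12\bigl((s+x)^{m}-(s-x)^{m}\bigr)$: the roles of $\mathbf{C}$ and $\mathbf{S}$ get interchanged, the resulting operator $G_{2}=I+V'$ is again a homeomorphism of $C_{0}[-b,b]$, each $\mathbf{s}_{m}$ vanishes at $0$, so $\{\mathbf{s}_{m}\}_{m\ge1}$ is complete in $C_{0}[-b,b]$ and therefore in $C_{0}^{1}[-b,b]$.

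The one genuinely delicate step is the invertibility of $G_{1}$ and $G_{2}$: because the even/odd projections couple the values of $w$ at $\sigma$ and $-\sigma$, these are not literally Volterra operators, and one must justify the refined estimate above — equivalently, contract on shrinking symmetric intervals $[-\rho,\rho]$ and glue — to obtain the quasi-nilpotence of $V$ and $V'$. The change of variables exposing the kernels $\mathbf{C}$ and $\mathbf{S}$, the boundedness of $P_{\pm}$, and the passage from $C$-density to $C^{1}$- and $C_{0}^{1}$-density are routine.
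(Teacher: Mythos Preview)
Your proof is correct and, for completeness, essentially the same as the paper's. The paper (citing \cite{KT AnalyticApprox}, summarized here in Proposition~\ref{Prop Operator G}) builds the Goursat-to-Goursat operators $G_{\pm}\eta(x)=\eta(x)+\int_{-x}^{x}\mathbf{K}(x,t)\bigl(\eta(\tfrac{t+x}{2})\pm\eta(\tfrac{t-x}{2})\bigr)\,dt$ and shows they are boundedly invertible; a change of variables $\sigma=(t+x)/2$ together with $\mathbf{C}=\mathbf{K}(\cdot,\cdot)+\mathbf{K}(\cdot,-\cdot)$, $\mathbf{S}=\mathbf{K}(\cdot,\cdot)-\mathbf{K}(\cdot,-\cdot)$ shows that your $G_{1}$ coincides with the paper's $G_{+}$ on $C_{0}[-b,b]$, so the two arguments are the same operator written in different coordinates. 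Your invertibility argument via the refined bound $|Vw(x)|\le C\int_{0}^{|x|}\max_{|\tau|\le\sigma}|w(\tau)|\,d\sigma$ and iteration is clean and is exactly the right substitute for the usual Volterra estimate once the parity projections enter.

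Two small remarks. First, your linear-independence argument through the leading asymptotics $\mathbf{c}_{m}(x)=2^{m-1}x^{m}+o(x^{m})$ is a pleasant elementary alternative to deducing independence from the bijectivity of $G$; both work. Second, the sentence ``hence in $C^{1}[-b,b]$ with respect to the maximum norm, because $C^{1}$ is dense in $C$'' has the justification backwards: density of $C^{1}$ in $C$ is irrelevant; what you use is simply that $C^{1}[-b,b]\subset C[-b,b]$, so a system complete in the larger space is automatically complete in any subspace equipped with the same norm. The conclusion is unaffected.
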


In the next section we show that there exists another way for construction of
the functions $\left\{  \mathbf{c}_{n}\right\}  _{n=0}^{\infty}$ and $\left\{
\mathbf{s}_{n}\right\}  _{n=1}^{\infty}$ which additionally reveals some of
their properties.

\section{Functions $\left\{  \mathbf{c}_{n}\right\}  $ and $\left\{
\mathbf{s}_{n}\right\}  $ as formal powers}
\label{Sect 3}

We shall use the following result from \cite{KrTNewSPPS}. Consider the
Sturm-Liouville equation of the form
\begin{equation}
(p(x)u^{\prime})^{\prime}+q(x)u=\sum_{k=1}^{N}\lambda^{k}R_{k}\left[
u\right]  ,\qquad x\in(a,b) \label{pencil}%
\end{equation}
where $R_{k}$ are linear differential operators of the first order,
$R_{k}\left[  u\right]  :=r_{k}(x)u+s_{k}(x)u^{\prime}$, $k=1,\ldots N$, and the
complex-valued functions $p$, $q$, $r_{k}$, $s_{k}$ are continuous on the
finite segment $\left[  a,b\right]  $.

Define the formal powers for equation \eqref{pencil} as follows
\begin{align*}
\widetilde{\mathcal{X}}^{\left(  -n\right)  }  &  \equiv\mathcal{X}^{\left(
-n\right)  }\equiv0\qquad\text{ for }n\in\mathbb{N},\\
\widetilde{\mathcal{X}}^{\left(  0\right)  }  &  \equiv\mathcal{X}^{\left(
0\right)  }\equiv1,\\
\displaybreak[2]\widetilde{\mathcal{X}}^{\left(  n\right)  }(x)  &  =%
\begin{cases}
\displaystyle\int_{x_{0}}^{x}f(s)\sum_{k=1}^{N}R_{k}\left[  f(s)\widetilde
{\mathcal{X}}^{\left(  n-2k+1\right)  }(s)\right]  ds, & n\text{ -
odd,}\smallskip\\
\displaystyle\int_{x_{0}}^{x}\widetilde{\mathcal{X}}^{\left(  n-1\right)
}\left(  s\right)  \dfrac{ds}{f^{2}\left(  s\right)  p\left(  s\right)  }, &
n\text{ - even,}%
\end{cases}\\
\mathcal{X}^{\left(  n\right)  }(x)  &  =%
\begin{cases}
\displaystyle\int_{x_{0}}^{x}\mathcal{X}^{\left(  n-1\right)  }\left(
s\right)  \dfrac{ds}{f^{2}\left(  s\right)  p\left(  s\right)  }, & n\text{ -
odd,}\smallskip\\
\displaystyle\int_{x_{0}}^{x}f(s)\sum_{k=1}^{N}R_{k}\left[  f(s)\mathcal{X}%
^{\left(  n-2k+1\right)  }(s)\right]  ds, & n\text{ - even}%
\end{cases}
\end{align*}
where $f$ is a particular complex-valued solution of equation \eqref{pencil} for $\lambda=0$ and $x_{0}$ is an arbitrary point of the segment $\left[  a,b\right]  $ such
that $p(x_{0})\neq0$.

\begin{theorem}
[SPPS representations for polynomial pencils of operators, \cite{KrTNewSPPS}]
\label{ThSPPS_Pencil} Assume that on a finite interval $[a,b]$, the equation
\[
(p(x)v^{\prime})^{\prime}+q(x)v=0
\]
possesses a particular solution $f$ such that the functions $fR_{k}[f]$,
$k=1,\ldots,N$ and $\frac{1}{f^{2}p}$ are continuous on $\left[  a,b\right]
$. Then the general solution of \eqref{pencil} has the form $u=c_{1}%
u_{1}+c_{2}u_{2}$, where $c_{1}$ and $c_{2}$ are arbitrary complex constants
and
\begin{equation}
u_{1}=f\sum_{n=0}^{\infty}\lambda^{n}\widetilde{\mathcal{X}}^{\left(
2n\right)  }\qquad\text{and}\qquad u_{2}=f\sum_{n=0}^{\infty}\lambda
^{n}\mathcal{X}^{\left(  2n+1\right)  }. \label{solPencil}%
\end{equation}
Both series in \eqref{solPencil} converge uniformly on $\left[  a,b\right]  $.
\end{theorem}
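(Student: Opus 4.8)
The plan is: (i) reduce equation \eqref{pencil} to a pair of successive quadratures via a P\'olya-type substitution, (ii) recognize the formal powers $\widetilde{\mathcal{X}}^{(n)}$ and $\mathcal{X}^{(n)}$ as exactly the $\lambda$-coefficients produced by iterating these quadratures, so that $u_1,u_2$ are formal solutions with prescribed data at $x_0$, and (iii) prove that the series and their termwise first derivatives converge uniformly on $[a,b]$ for every fixed $\lambda$, which turns the formal computation into a rigorous one. Note first that the hypothesis already forces $f\neq0$ and $p\neq0$ on all of $[a,b]$, since otherwise $\tfrac1{f^2p}$ could not be continuous there. Writing $v=fw$ and using $L[f]:=(pf')'+qf=0$, one has the P\'olya factorization $L[fw]=\tfrac1f\bigl(pf^2w'\bigr)'$, so $v=fw$ solves \eqref{pencil} if and only if
\[
\bigl(pf^2w'\bigr)'=f\sum_{k=1}^{N}\lambda^kR_k[fw].
\]
Since $pf^2$ is nowhere zero, this second-order equation for $w$ is in turn equivalent to prescribing $w(x_0)$ and $w'(x_0)$ and then recovering $pf^2w'$ by one integration of $f\sum_k\lambda^kR_k[fw]$ and $w$ by one more integration; this is precisely the structure built into the definition of the formal powers.

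Next I would substitute the Ansatz $w_1=\sum_{n=0}^{\infty}\lambda^n\widetilde{\mathcal{X}}^{(2n)}$ and match powers of $\lambda$. From the even-index rule $(\widetilde{\mathcal{X}}^{(2m)})'=\widetilde{\mathcal{X}}^{(2m-1)}/(f^2p)$ one gets $pf^2w_1'=\sum_{m\ge1}\lambda^m\widetilde{\mathcal{X}}^{(2m-1)}$; differentiating and using the odd-index rule $(\widetilde{\mathcal{X}}^{(2m-1)})'=f\sum_kR_k[f\widetilde{\mathcal{X}}^{(2m-2k)}]$, the convention $\widetilde{\mathcal{X}}^{(j)}\equiv0$ for $j<0$, the linearity of the $R_k$, and the reindexing $m=j+k$ gives $(pf^2w_1')'=f\sum_k\lambda^kR_k\bigl[f\sum_j\lambda^j\widetilde{\mathcal{X}}^{(2j)}\bigr]=f\sum_k\lambda^kR_k[fw_1]$, so $u_1=fw_1$ is a solution. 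The same computation with the family $\mathcal{X}^{(2n+1)}$ produces the second solution $u_2=fw_2$, $w_2=\sum_n\lambda^n\mathcal{X}^{(2n+1)}$. Since $\widetilde{\mathcal{X}}^{(n)}(x_0)=\mathcal{X}^{(n)}(x_0)=0$ for $n\ge1$, one reads off $w_1(x_0)=1$, $w_1'(x_0)=0$ and $w_2(x_0)=0$, $w_2'(x_0)=\tfrac1{f^2(x_0)p(x_0)}$, whence the Wronskian of $u_1$ and $u_2$ at $x_0$ equals $1/p(x_0)\neq0$. Thus $u_1,u_2$ are linearly independent solutions of the (genuinely second-order, since $p\neq0$) equation \eqref{pencil}, and therefore its general solution is $c_1u_1+c_2u_2$.

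The remaining, and principal, point is to justify the termwise operations above, i.e.\ to show that $\sum_n\lambda^n\widetilde{\mathcal{X}}^{(2n)}$, $\sum_n\lambda^n\mathcal{X}^{(2n+1)}$ and the series obtained from them by one differentiation converge absolutely and uniformly on $[a,b]$ for every $\lambda\in\mathbb{C}$. A naive induction on $\max_{[a,b]}|\widetilde{\mathcal{X}}^{(n)}|$ is awkward because the odd-index rule is non-local: it couples $\widetilde{\mathcal{X}}^{(n)}$ to several lower formal powers, not to $\widetilde{\mathcal{X}}^{(n-1)}$ alone. Instead, one unrolls the recursion completely: using $fR_k[f\widetilde{\mathcal{X}}^{(2j)}]=(fR_k[f])\widetilde{\mathcal{X}}^{(2j)}+\tfrac{s_k}{p}\widetilde{\mathcal{X}}^{(2j-1)}$ (both coefficients bounded on $[a,b]$ by hypothesis), $\widetilde{\mathcal{X}}^{(n)}$ becomes a finite sum, over the chains $n=i_0>i_1>\dots>i_L=0$ permitted by the recursion, of $L$-fold nested integrals of bounded functions. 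Every such chain has length $L\ge\lceil n/(2N)\rceil$ (each step lowers the index by at most $2N$), each contributes at most $(A\ell)^L/L!$ with $\ell=b-a$ and $A$ depending only on the stated $L^\infty$ norms of $\tfrac1{f^2p}$, $fR_k[f]$ and $\tfrac{s_k}{p}$, and there are at most $(2N)^L$ chains of length $L$; summing the resulting tail of an exponential series gives a uniform bound $|\widetilde{\mathcal{X}}^{(n)}(x)|\le C\,(C'\ell)^{\lceil n/(2N)\rceil}/\lceil n/(2N)\rceil!$ on $[a,b]$, and similarly for $\mathcal{X}^{(n)}$ and for the first derivatives. Since a factorial eventually dominates any power of $|\lambda|$, the Weierstrass $M$-test applies to the series in \eqref{solPencil} and to their termwise derivatives, which legitimizes the preceding computation and completes the proof. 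I expect this last estimate to be the main obstacle, precisely because of the non-locality of the recursion; the ``sum over admissible chains'' reorganization is what keeps the bookkeeping under control.
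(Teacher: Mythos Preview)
The paper does not prove this theorem; it is quoted from \cite{KrTNewSPPS} and used as a tool for Section~\ref{Sect 3}. So there is no ``paper's own proof'' to compare your proposal with here.

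That said, your proposal is a correct and complete argument along the standard SPPS lines. The P\'olya factorization $L[fw]=\tfrac1f(pf^2w')'$ is exactly the reduction used in \cite{KrTNewSPPS}; your coefficient-matching is accurate (including the use of the convention $\widetilde{\mathcal X}^{(j)}\equiv0$ for $j<0$); and the initial-value/Wronskian computation showing linear independence is correct. One small remark on the convergence step: when you invoke the bound on $\tfrac{s_k}{p}$, note that this is legitimate because the continuity of $\tfrac1{f^2p}$ together with the continuity of $f$ and $p$ forces $p\neq0$ on $[a,b]$, so $s_k/p$ is indeed continuous and bounded; you use this implicitly but it is worth saying. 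Your ``chains'' reorganization is a clean way to handle the non-local recursion and delivers the factorial decay $|\widetilde{\mathcal X}^{(n)}|\le C(C'\ell)^{\lceil n/(2N)\rceil}/\lceil n/(2N)\rceil!$, which indeed suffices via the Weierstrass $M$-test for every fixed $\lambda$. The argument in \cite{KrTNewSPPS} obtains an equivalent estimate by a direct induction bounding $\max|\widetilde{\mathcal X}^{(2n)}|$ and $\max|\widetilde{\mathcal X}^{(2n-1)}|$ simultaneously (grouping the even and the odd step into one two-step recursion), which avoids enumerating chains but is logically the same estimate. Either route works; yours is slightly more transparent about where the combinatorics enters.
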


Below it becomes clear that of particular interest is the equation
\begin{equation}
y^{\prime\prime}-q(x)y=2\lambda y^{\prime} \label{SL with der on the right}%
\end{equation}
where $q$ is a continuous complex-valued function on $[0,b]$. Its general
solution can be obtained according to Theorem \ref{ThSPPS_Pencil}.

\begin{corollary}
\label{Cor SPPS for Auxiliary SL} Assume that the function $f$ is a solution
of the equation
\[
v^{\prime\prime}-q(x)v=0
\]
on $(0,b)$ such that $f(0)=1$ and $f(x)\neq0$, $x\in\lbrack0,b]$ (see Remark \ref{RemarkNonVanish} according to the
existence of such a nonvanishing solution).
Then a general solution of \eqref{SL with der on the right} can be written in
the form%
\begin{equation}
y_{1}=f\sum_{n=0}^{\infty}\lambda^{n}\widetilde{Y}^{(2n)}\qquad\text{and}\qquad y_{2}=f\sum_{n=0}^{\infty}\lambda^{n}Y^{(2n+1)}
\label{y1 and y2 from Corollary}%
\end{equation}
with the formal powers $\widetilde{Y}^{(n)}$ and $Y^{(n)}$ defined as follows
\begin{align*}
\widetilde{Y}^{(0)}&\equiv Y^{(0)}\equiv1,\\
\widetilde{Y}^{(n)}(x)&=
\begin{cases}
2\int_{0}^{x}f(s)\left(  f(s)\widetilde{Y}^{\left(  n-1\right)
}(s)\right)  ^{\prime}ds, & n\text{ -- odd},\\
\int_{x_{0}}^{x}\widetilde{Y}^{\left(  n-1\right)  }\left(
s\right)  \dfrac{ds}{f^{2}\left(  s\right)  }, & n\text{ -- even},
\end{cases}\\
Y^{(n)}(x)&=
\begin{cases}
2\int_{0}^{x}f(s)\left(  f(s)Y^{\left(  n-1\right)  }(s)\right)
^{\prime}ds, & n\text{ -- even},\\
\int_{0}^{x}Y^{\left(  n-1\right)  }\left(  s\right)  \dfrac
{ds}{f^{2}\left(  s\right)  }, & n\text{ -- odd}.
\end{cases}
\end{align*}
\end{corollary}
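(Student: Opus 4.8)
The plan is to recognize \eqref{SL with der on the right} as a particular instance of the polynomial operator pencil \eqref{pencil} and then simply read off the conclusion from Theorem \ref{ThSPPS_Pencil}. Writing \eqref{SL with der on the right} in the form $(p(x)y')'+\tilde{q}(x)y=\sum_{k=1}^{N}\lambda^{k}R_{k}[y]$, we take $p\equiv 1$, zero-order coefficient $\tilde{q}=-q$, $N=1$, and $R_{1}[y]=2y'$ (i.e.\ $r_{1}\equiv 0$, $s_{1}\equiv 2$); then $\lambda R_{1}[y]=2\lambda y'$, and for $\lambda=0$ the equation $(py')'+\tilde{q}y=0$ reduces to $y''-qy=0$, which is \eqref{SLhom}. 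A nonvanishing solution $f$ of the latter with $f(0)=1$ exists by Remark \ref{RemarkNonVanish}, and since $q\in C[0,b]$ standard ODE theory gives $f\in C^{2}[0,b]$; consequently $fR_{1}[f]=2ff'$ and $\tfrac{1}{f^{2}p}=\tfrac{1}{f^{2}}$ are continuous on $[0,b]$, which is exactly the hypothesis required by Theorem \ref{ThSPPS_Pencil}.

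Applying Theorem \ref{ThSPPS_Pencil} with $x_{0}=0$ (admissible because $p(0)=1\neq 0$) we immediately obtain that the general solution of \eqref{SL with der on the right} is $y=c_{1}y_{1}+c_{2}y_{2}$ with $y_{1}=f\sum_{n=0}^{\infty}\lambda^{n}\widetilde{\mathcal{X}}^{(2n)}$ and $y_{2}=f\sum_{n=0}^{\infty}\lambda^{n}\mathcal{X}^{(2n+1)}$, the series converging uniformly on $[0,b]$. It then only remains to verify that, for this particular choice of data, the formal powers $\widetilde{\mathcal{X}}^{(n)}$ and $\mathcal{X}^{(n)}$ from the general definition coincide with the functions $\widetilde{Y}^{(n)}$ and $Y^{(n)}$ displayed in the statement, after which substitution into \eqref{solPencil} yields \eqref{y1 and y2 from Corollary}.

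This remaining step is a direct comparison of recursive definitions. Since $N=1$, the inner sum $\sum_{k=1}^{N}R_{k}[f\,\widetilde{\mathcal{X}}^{(n-2k+1)}]$ collapses to the single term $R_{1}[f\,\widetilde{\mathcal{X}}^{(n-1)}]=2(f\widetilde{\mathcal{X}}^{(n-1)})'$, so the odd-index rule becomes $\widetilde{\mathcal{X}}^{(n)}=2\int_{0}^{x}f(s)\bigl(f(s)\widetilde{\mathcal{X}}^{(n-1)}(s)\bigr)'ds$, while the even-index rule with $p\equiv 1$ reads $\widetilde{\mathcal{X}}^{(n)}=\int_{0}^{x}\widetilde{\mathcal{X}}^{(n-1)}(s)f^{-2}(s)\,ds$; the analogous reductions hold for $\mathcal{X}^{(n)}$ with the parities interchanged. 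These are precisely the recursions defining $\widetilde{Y}^{(n)}$ and $Y^{(n)}$, so, starting from $\widetilde{\mathcal{X}}^{(0)}=\mathcal{X}^{(0)}\equiv 1=\widetilde{Y}^{(0)}=Y^{(0)}$, an immediate induction on $n$ gives $\widetilde{\mathcal{X}}^{(n)}\equiv\widetilde{Y}^{(n)}$ and $\mathcal{X}^{(n)}\equiv Y^{(n)}$. There is no genuinely hard step here: the content is the bookkeeping identification of the two systems of recursive integrals. The two points deserving a word of care are the regularity conditions $fR_{1}[f],\,f^{-2}\in C[0,b]$, which follow at once from continuity of $q$ and non-vanishing of $f$, and the observation that, because $N=1$, the index $n-2k+1$ is always $n-1\ge -1$ and the negative-index convention $\widetilde{\mathcal{X}}^{(-m)}\equiv\mathcal{X}^{(-m)}\equiv 0$ is never actually invoked, so the recursion matches cleanly from its very first term.
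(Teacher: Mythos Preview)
Your proof is correct and follows essentially the same approach as the paper, which simply remarks that the corollary is a direct consequence of Theorem \ref{ThSPPS_Pencil} with $N=1$, $R_{1}[y]=2y'$, etc. You have spelled out the verification of the hypotheses and the identification $\widetilde{\mathcal{X}}^{(n)}=\widetilde{Y}^{(n)}$, $\mathcal{X}^{(n)}=Y^{(n)}$ more explicitly than the paper does, but the content is identical.
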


This is a direct corollary of Theorem \ref{ThSPPS_Pencil} where $N=1$,
$R_{1}\left[  y\right]  =2y^{\prime}$, etc.

From now on we assume that $f$ satisfies the conditions of Corollary
\ref{Cor SPPS for Auxiliary SL}.

\begin{remark}
\label{Rem Init cond y1 and y2 from Corollary}It is easy to verify that the
solutions $y_{1}$ and $y_{2}$ from \eqref{y1 and y2 from Corollary} fulfil the
following initial conditions
\begin{align*}
y_{1}(0)&=1,& y_{1}^{\prime}(0)&=f^{\prime}(0),\\
y_{2}(0)&=0,& y_{2}^{\prime}(0)&=1.
\end{align*}
\end{remark}

In the following proposition a relation between the functions $\mathbf{c}_{n}$
and $\mathbf{s}_{n}$ defined in \eqref{cm} and \eqref{sm} with the formal powers from Corollary \ref{Cor SPPS for Auxiliary SL} is established.

\begin{proposition}
\label{Prop c_n s_n as formal powers}
\begin{equation}
\frac{\mathbf{c}_{n}}{n!}=fY^{(2n-1)},\qquad\frac{\mathbf{s}_{n}}%
{n!}=f\left(  \widetilde{Y}^{(2n)}+Y^{(2n-1)}\right),\qquad  \text{ for any
odd }n\in\mathbb{N} \label{cn and sn for odd}%
\end{equation}
and
\begin{equation}
\frac{\mathbf{s}_{n}}{n!}=fY^{(2n-1)},\qquad \frac{\mathbf{c}_{n}}{n!}=f\left(  \widetilde{Y}^{(2n)}+Y^{(2n-1)}\right),\qquad\text{for any even
}n\in\mathbb{N}. \label{cn and sn for even}
\end{equation}
\end{proposition}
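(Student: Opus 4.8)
The plan is to prove \eqref{cn and sn for odd} and \eqref{cn and sn for even} by induction on $n$, exploiting the recursive structure shared by the two families. The key observation is that both $\mathbf{c}_n$ and $\mathbf{s}_n$ (divided by $n!$) should satisfy first-order integro-differential recursions obtained from \eqref{cm}--\eqref{sm} together with the defining relations for the formal powers $\varphi_k$ (equations \eqref{phik}--\eqref{psik}), and that the formal powers $Y^{(n)}$, $\widetilde{Y}^{(n)}$ from Corollary \ref{Cor SPPS for Auxiliary SL} satisfy recursions of exactly the same shape. First I would set $\mathbf{a}_n := \mathbf{c}_n/n!$ for odd $n$ and $\mathbf{a}_n := \mathbf{s}_n/n!$ for even $n$ (i.e.\ the ``$fY^{(2n-1)}$'' family), and $\mathbf{b}_n := \mathbf{s}_n/n!$ for odd $n$, $\mathbf{b}_n := \mathbf{c}_n/n!$ for even $n$ (the ``$f(\widetilde Y^{(2n)}+Y^{(2n-1)})$'' family). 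With $n=0,1$ one checks the base case directly: $\mathbf{c}_0 = f = fY^{(0)} = f Y^{(-1)}\cdot(\dots)$ needs care since $Y^{(-1)}\equiv 0$; in fact one treats $n=1$ as the base, using $\mathbf{c}_1(x) = \varphi_1(x) = fX^{(1)} = f\cdot \int_0^x f^{-2}$ which matches $fY^{(1)}$, and $\mathbf{s}_1(x) = x\varphi_0(x) = x f(x)$, which one verifies equals $f(\widetilde Y^{(2)}+Y^{(1)})$ by an explicit short computation.

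\textbf{The inductive step.} The heart of the argument is to derive a recursion for $\mathbf{c}_n$ and $\mathbf{s}_n$ from \eqref{cm}--\eqref{sm}. Differentiating the defining sums, using that $\varphi_k' = \frac{f'}{f}\varphi_k + k\psi_{k-1}$ (for $k$ of the appropriate parity, from \eqref{phik}--\eqref{psik} and \eqref{du1du2}) and that $\psi_{k-1}$ relates back to $\varphi_{k-2}$ via another integration, one should obtain something like $\mathbf{c}_n' = \frac{f'}{f}\mathbf{c}_n + (\text{terms involving } \mathbf{c}_{n-1}\text{ and } \mathbf{s}_{n-1})$, and symmetrically for $\mathbf{s}_n$. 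Rearranging, the quantity $\bigl(\mathbf{c}_n/(n!\,f)\bigr)' = \bigl(Y^{(2n-1)}\bigr)'$ should come out as $\frac1{f^2}\cdot\bigl(\text{something} = \mathbf{b}_{n-1}\cdot f\bigr)$ for the odd case, matching exactly the even-index rule $Y^{(2n-1)} = \int_0^x Y^{(2n-2)} f^{-2}$ together with $\widetilde Y^{(2n)}+Y^{(2n-1)}$ identifications from the induction hypothesis; and the ``$f(fY^{(n-1)})'$'' rule (the odd-index rule in the $Y$ hierarchy, even-index in $\widetilde Y$) should match the differentiated relation for the other family. The bookkeeping of which parity of $n$ sends you into which branch of the piecewise definitions of $X^{(n)}$, $\widetilde X^{(n)}$, $Y^{(n)}$, $\widetilde Y^{(n)}$ is the part that requires the most attention.

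\textbf{An alternative, perhaps cleaner route.} Rather than manipulating derivatives, one may instead directly massage the double sums. Writing $\mathbf{c}_n(x) = \sum_{\text{even }k}\binom{n}{k}x^k\varphi_{n-k}(x)$ and inserting the recursive-integral definitions of $\varphi_{n-k} = fX^{(n-k)}$ or $f\widetilde X^{(n-k)}$, one exchanges the order of the finite sum and the iterated integrals and recognizes the result as the formal-power integral defining $fY^{(2n-1)}$ — the combinatorial identity $\sum_k \binom{n}{k}(\dots)$ telescoping into the single nested integral of the $Y$-hierarchy. This avoids parity-of-derivative subtleties but trades them for a combinatorial identity among binomial sums and iterated integrals. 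Either way, I expect the \emph{main obstacle} to be purely organizational: keeping straight the four interleaved hierarchies ($X,\widetilde X$ on the Sturm–Liouville side; $Y,\widetilde Y$ on the pencil side) and the fact that the roles of $\mathbf{c}_n$ and $\mathbf{s}_n$ swap with the parity of $n$, so that a single inductive statement must be phrased to cover both \eqref{cn and sn for odd} and \eqref{cn and sn for even} simultaneously — most naturally by proving the pair of identities $\mathbf{a}_n = n!\,fY^{(2n-1)}$ and $\mathbf{b}_n = n!\,f(\widetilde Y^{(2n)}+Y^{(2n-1)})$ for all $n\ge 1$ at once, and only at the end translating back via the parity dictionary. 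No genuine analytic difficulty is expected once the correct recursions are written down, since uniform convergence of all series involved is already guaranteed by Theorem \ref{ThGenSolSturmLiouville} and Theorem \ref{ThSPPS_Pencil}.
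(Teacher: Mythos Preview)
Your inductive strategy is sound in principle and would yield a correct proof, but it is a genuinely different route from the paper's. The paper avoids induction entirely by exploiting a \emph{generating-function} trick: if $u$ solves $u''-qu=\lambda^2 u$, then $v=e^{\lambda x}u$ solves $v''-qv=2\lambda v'$ (and $w=e^{-\lambda x}u$ solves the same with $-\lambda$). Taking $u=u_1+u_2$ in the SPPS form \eqref{u1u2}, the Cauchy product of $e^{\pm\lambda x}=\sum \frac{(\pm\lambda x)^k}{k!}$ with $\sum\frac{\lambda^j}{j!}\varphi_j$ gives directly $v=\sum_k\lambda^k\bigl(\tfrac{\mathbf{c}_k}{k!}+\tfrac{\mathbf{s}_k}{k!}\bigr)$ and $w=\sum_k\lambda^k\bigl(\tfrac{\mathbf{c}_k}{k!}-\tfrac{\mathbf{s}_k}{k!}\bigr)$. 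On the other hand, $v$ and $w$ are also expressed via Corollary \ref{Cor SPPS for Auxiliary SL} in terms of $\widetilde Y^{(2n)}$ and $Y^{(2n+1)}$, with the constants fixed by matching initial values at $x=0$. Equating coefficients of $\lambda^k$ in the two representations yields \eqref{cn and sn for odd} and \eqref{cn and sn for even} all at once.

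What each approach buys: the paper's method packages all identities into a single coefficient comparison and makes the parity swap between $\mathbf{c}_n$ and $\mathbf{s}_n$ emerge automatically from the sign in $e^{\pm\lambda x}$, with essentially no bookkeeping. Your inductive route is more elementary (no SPPS convergence is invoked, only finite recursions) and makes the recursive relationship between the two hierarchies explicit, but --- as you correctly anticipate --- the main cost is the organizational overhead of tracking four interleaved hierarchies and the parity dictionary through the induction. Either way the analytic content is nil; the difference is purely in economy of presentation.
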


\begin{proof}
Observe that $u$ is a solution of the equation
\begin{equation}
u^{\prime\prime}-q(x)u=\lambda^{2}u \label{Schr u}%
\end{equation}
iff $v=e^{\lambda x}u$ is a solution of
\[
v^{\prime\prime}-q(x)v=2\lambda v^{\prime}.
\]
Indeed, for $v=e^{\lambda x}u$ we have $v^{\prime}=\lambda v+e^{\lambda
x}u^{\prime}$ and
\begin{equation*}
v^{\prime\prime}    =\lambda v^{\prime}+\lambda e^{\lambda x}u^{\prime
}+e^{\lambda x}u^{\prime\prime} =\lambda v^{\prime}+\lambda\left(  v^{\prime}-\lambda v\right)  +e^{\lambda
x}\left(  qu+\lambda^{2}u\right) =2\lambda v^{\prime}+qv.
\end{equation*}
Analogously, the function $w=e^{-\lambda x}u$ solves the equation
$w^{\prime\prime}-q(x)w=-2\lambda w^{\prime}$.

Now let us consider the solution $u$ of (\ref{Schr u}) having the form
$u=u_{1}+u_{2}$ with the solutions $u_{1}$ and $u_{2}$ constructed according
to Theorem \ref{ThGenSolSturmLiouville},
\[
u_{1}=\sum_{k=0}^{\infty}\frac{\lambda^{2k}}{(2k)!}\varphi_{2k}\qquad
\text{and}\qquad u_{2}=\sum_{k=0}^{\infty}\frac{\lambda^{2k+1}}{(2k+1)!}%
\varphi_{2k+1}.
\]
We have then
\begin{equation}
u(0)=1\qquad\text{and}\qquad u^{\prime}(0)=f^{\prime}(0)+\lambda.
\label{cond for u}%
\end{equation}

Consequently,
\begin{equation}
v    =e^{\lambda x}u=\sum_{k=0}^{\infty}\frac{\lambda^{k}x^{k}}{k!}\sum
_{j=0}^{\infty}\frac{\lambda^{j}}{j!}\varphi_{j}=\sum_{k=0}^{\infty}%
\lambda^{k}\sum_{j=0}^{k}\frac{x^{j}}{j!}\frac{\varphi_{k-j}}{\left(
k-j\right)  !} =\sum_{k=0}^{\infty}\lambda^{k}\left(  \frac{\mathbf{c}_{k}}{k!}%
+\frac{\mathbf{s}_{k}}{k!}\right)  . \label{v1}%
\end{equation}
In a similar way we obtain that
\begin{equation}
w=e^{-\lambda x}u=\sum_{k=0}^{\infty}\lambda^{k}\left(  \frac{\mathbf{c}_{k}%
}{k!}-\frac{\mathbf{s}_{k}}{k!}\right)  . \label{w1}%
\end{equation}

On the other hand, the functions $v$ and $w$ can be constructed according to
Corollary \ref{Cor SPPS for Auxiliary SL}. For this we observe that
\begin{align*}
v(0)&=u(0)=1,& v^{\prime}(0)&=f^{\prime}(0)+2\lambda,\\
w(0)&=1, &w^{\prime}(0)&=f^{\prime}(0).
\end{align*}
Comparing these values with those from Remark
\ref{Rem Init cond y1 and y2 from Corollary} we obtain
\begin{equation}
v=y_{1}+2\lambda y_{2}=f\left(  \sum_{n=0}^{\infty}\lambda^{n}\widetilde
{Y}^{(2n)}+2\sum_{n=0}^{\infty}\lambda^{n+1}Y^{(2n+1)}\right)  \label{v2}%
\end{equation}
and
\begin{equation}
w=f\sum_{n=0}^{\infty}\left(  -\lambda\right)  ^{n}\widetilde{Y}^{(2n)}.
\label{w2}%
\end{equation}
Comparing (\ref{v1}) with (\ref{v2}) and (\ref{w1}) with (\ref{w2}) we arrive
at the relations%
\[
\sum_{k=0}^{\infty}\lambda^{k}\left(  \frac{\mathbf{c}_{k}}{k!}+\frac
{\mathbf{s}_{k}}{k!}\right)  =f+f\sum_{k=1}^{\infty}\lambda^{k}\left(
\widetilde{Y}^{(2k)}+2Y^{(2k-1)}\right)
\]
and
\[
\sum_{k=0}^{\infty}\lambda^{k}\left(  \frac{\mathbf{c}_{k}}{k!}-\frac
{\mathbf{s}_{k}}{k!}\right)  =f\sum_{k=0}^{\infty}(-1)^{k}\lambda^{k}\widetilde{Y}^{(2k)}.
\]
Adding and substracting these two equalities, due to the uniform convergence
of the power series with respect to $\lambda$, we obtain
(\ref{cn and sn for odd}) and (\ref{cn and sn for even}).
\end{proof}

\section{Generalized derivatives}
\label{Sect 4}
Following the ideas from \cite{KMoT}, \cite{KKTT}, \cite{KT Obzor}, let us
introduce the following couple of operations which will be called the
generalized derivatives,%
\[
\gamma_{1}g(x):=f^{2}(x)\frac{d}{dx}\left(  \frac{g(x)}{f(x)}\right)
\]
and%
\[
\gamma_{2}g(x):=\frac{1}{2}\int_{0}^{x}\frac{g^{\prime}(s)ds}{f(s)}.
\]

The following proposition clarifies their relation to equation
(\ref{SL with der on the right}).
\begin{proposition}\label{Prop gamma2gamma1}
For any $g\in C^{2}[0,b]$,
\[
\gamma_{2}\gamma_{1}g(x)=\frac{1}{2}\int_{0}^{x}\left(  \frac{d^{2}}{ds^{2}%
}-q(s)\right)  g(s)ds.
\]
Moreover, the operator $\gamma_2\gamma_1$ can be extended onto $C^1[0,b]$ by the rule
\begin{equation}\label{gamma2gamma1 C1}
\gamma_{2}\gamma_{1}g(x)=\frac{1}{2}\biggl(g'(x)-g'(0) - \int_0^x q(s)g(s)\,ds\biggr).
\end{equation}
\end{proposition}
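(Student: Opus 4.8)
The plan is to compute $\gamma_2\gamma_1 g$ directly by unwinding the two definitions, starting from the inner operator $\gamma_1$. First I would observe that
\[
\gamma_1 g(x) = f^2(x)\frac{d}{dx}\left(\frac{g(x)}{f(x)}\right) = f^2(x)\cdot\frac{g'(x)f(x)-g(x)f'(x)}{f^2(x)} = f(x)g'(x) - f'(x)g(x).
\]
This is just the Wronskian-type expression $W(f,g)$, and importantly it only requires $g\in C^1[0,b]$ to be well-defined and continuous. Next I would apply $\gamma_2$; since $\gamma_2 h(x) = \frac12\int_0^x \frac{h'(s)}{f(s)}\,ds$ needs $h\in C^1$, for the $C^2$ statement I would differentiate $\gamma_1 g = fg' - f'g$ to get $(\gamma_1 g)' = f'g' + fg'' - f''g - f'g' = fg'' - f''g$. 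Using that $f$ solves $f'' = qf$ (the hypothesis of Corollary \ref{Cor SPPS for Auxiliary SL}), this becomes $fg'' - qfg = f(g'' - qg)$. Therefore
\[
\gamma_2\gamma_1 g(x) = \frac12\int_0^x \frac{f(s)\bigl(g''(s)-q(s)g(s)\bigr)}{f(s)}\,ds = \frac12\int_0^x \bigl(g''(s) - q(s)g(s)\bigr)\,ds,
\]
which is the first claimed formula.

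For the extension to $C^1[0,b]$, the point is that the expression $\frac12\int_0^x(g'' - qg)$ can be rewritten without the second derivative: integrating $\int_0^x g''(s)\,ds = g'(x) - g'(0)$ gives exactly
\[
\gamma_2\gamma_1 g(x) = \frac12\left(g'(x) - g'(0) - \int_0^x q(s)g(s)\,ds\right),
\]
and every term on the right-hand side makes sense for $g\in C^1[0,b]$ (indeed, it is continuous in $x$). So I would present \eqref{gamma2gamma1 C1} as the natural continuous extension: the two sides agree on the dense subspace $C^2[0,b]$ by the computation above, and the right-hand side is a bounded expression in the $C^1$ norm, so it is the unique continuous extension. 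Alternatively, and perhaps more cleanly, one can simply note that $\gamma_2\gamma_1 g = \gamma_2(fg' - f'g)$ and compute $\gamma_2(fg'-f'g)(x) = \frac12\int_0^x \frac{(f g' - f' g)'(s)}{f(s)}\,ds$; but since $fg' - f'g$ is only $C^1$ when $g\in C^1$, one should instead integrate by parts or directly verify that $\frac{d}{dx}$ of the right-hand side of \eqref{gamma2gamma1 C1} equals $\frac12\frac{d}{dx}\left(\frac{(fg'-f'g)}{?}\right)$ — this is where a little care is needed.

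I do not expect any genuine obstacle here; the computation is short. The only subtlety — and the step I would flag — is the justification that \eqref{gamma2gamma1 C1} legitimately \emph{extends} $\gamma_2\gamma_1$ rather than merely coinciding with it on $C^2$: one must argue either by density and continuity (the right-hand side is a continuous functional of $g\in C^1$, and $C^2$ is dense in $C^1$ with respect to the $C^1$ norm, so the extension is forced and unique), or observe directly that the composition $\gamma_2\circ\gamma_1$ still makes sense on $C^1$ provided one interprets $\gamma_2$ applied to the $C^1$ function $\gamma_1 g = fg' - f'g$ via integration by parts to avoid differentiating $g$ twice. Concretely, $\gamma_2(fg'-f'g)(x) = \frac12\int_0^x \frac{(fg'-f'g)'}{f}$ and writing $(fg'-f'g)' = (fg')' - (f'g)'$ and integrating the offending terms by parts reproduces the right-hand side of \eqref{gamma2gamma1 C1} without ever requiring $g''$. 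I would spell out whichever of these two routes is shorter in the final write-up.
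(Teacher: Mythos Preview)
Your proposal is correct and follows essentially the same computation as the paper: expand $\gamma_1 g = fg' - f'g$, differentiate to $fg'' - f''g$, use $f'' = qf$, cancel $f$, and then integrate $g''$ to reach \eqref{gamma2gamma1 C1}. The paper does this in one displayed chain of equalities and simply declares \eqref{gamma2gamma1 C1} as the extension without any density or integration-by-parts argument, so your extra discussion of how to justify the extension is more than the paper provides but not a different method.
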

Indeed, by definition,
\begin{align*}
\gamma_{2}\gamma_{1}g(x)  &  =\frac{1}{2}\int_{0}^{x}\frac{1}{f(s)}\frac
{d}{ds}\left(  f^{2}(s)\frac{d}{ds}\left(  \frac{g(s)}{f(s)}\right)  \right)
ds  =\frac{1}{2}\int_{0}^{x}\frac{1}{f(s)}\frac{d}{ds}\bigl(  f(s)g^{\prime
}(s)-f^{\prime}(s)g(s)\bigr)  ds\\
&  =\frac{1}{2}\int_{0}^{x}\frac{1}{f(s)}\bigl(  f(s)g^{\prime\prime
}(s)-f^{\prime\prime}(s)g(s)\bigr)  ds  =\frac{1}{2}\int_{0}^{x}\bigl(  g^{\prime\prime}(s)-q(s)g(s)\bigr)  ds \\
& = \frac{1}{2}\biggl(g'(x)-g'(0) - \int_0^x q(s)g(s)\,ds\biggr).
\end{align*}
Thus, for a solution of (\ref{SL with der on the right}) we have
\[
\gamma_{2}\gamma_{1}y(x)=\lambda\left(  y(x)-y(0)\right)  .
\]

Note that
\begin{equation*}
\gamma_{1}f\equiv0\qquad\text{and}\qquad\gamma_{2}\gamma_{1}\left[  1\right](x)
=-\frac{1}{2}\int_{0}^{x}q(s)ds = -\frac 12 Q(x).
\end{equation*}

\begin{remark}\label{Rem gamma2gamma1 1}
Below we consider the functions $\left(
\gamma_{2}\gamma_{1}\right)  ^{j}\left[  1\right]  $, $j=0,1,\ldots$. They
appear in \cite[Lemma 1.4.1]{Marchenko} where it was shown that they are well
defined for $j\leq n$ if $q\in W_{2}^{n-1}[0,b]$, and the function
$\left(  \gamma_{2}\gamma_{1}\right)  ^{n}\left[  1\right]  $ possesses a
square integrable derivative. Moreover, it can be easily verified using \eqref{gamma2gamma1 C1} that if $q\in C^{n-1}[0,b]$ then the functions $\left(
\gamma_{2}\gamma_{1}\right)  ^{j}\left[  1\right]  $ are well-defined (and are continuous functions) for $j\leq n+1$.

In what follows when considering the function
$\left(  \gamma_{2}\gamma_{1}\right)  ^{j}\left[  1\right]  $, if not specified explicitly, we suppose that
$q\in W_{2}^{j-1}[0,b]$.
\end{remark}

In the following statement we summarize several properties of the functions
$\mathbf{c}_{n}$ and $\mathbf{s}_{n}$ related to the generalized derivatives.

\begin{proposition}
\label{Prop Properties of c_n and s_n} The following relations are valid
\begingroup
\allowdisplaybreaks
\begin{align}
\gamma_{1}\mathbf{c}_{0}&=0,\qquad\gamma_{1} \mathbf{c}_{1}=1,\qquad
\gamma_{1}\mathbf{s}_{1}=f^{2}, \label{gamma_1 c_0}\\
\gamma_{2}\gamma_{1}\mathbf{c}_{0}&=\gamma_{2}\gamma_{1}\mathbf{c}
_{1}=0,\qquad\gamma_{2}\gamma_{1}\mathbf{s}_{1}=f-1,
\label{gamma2 gamma1 c_0}\\
\gamma_{1}\mathbf{c}_{n}(0)&=\gamma_{1}\mathbf{s}_{n}(0)=0,\qquad n=2,3,\ldots, \label{gamma_1 at zero}\\
\gamma_{2}\gamma_{1}\mathbf{c}_{n}&=n\mathbf{s}_{n-1},\qquad
n=1,2,\ldots, \label{gamma2 gamma1 c_n}\\
\gamma_{2}\gamma_{1}\mathbf{s}_{n}&=n\mathbf{c}_{n-1},\qquad
n=2,3,\ldots, \label{gamma2 gamma1 s_n}\\
\left(  \gamma_{2}\gamma_{1}\right)  ^{j}\frac{\mathbf{c}_{n}}{n!}&=
\begin{cases}
\frac{\mathbf{s}_{n-j}}{\left(  n-j\right)  !} & \text{if }j\text{ is odd and
}j<n,\\
\frac{\mathbf{c}_{n-j}}{\left(  n-j\right)  !} & \text{if }j\text{ is even and
}j<n,\\
\left(  \gamma_{2}\gamma_{1}\right)  ^{j-n}\left[  f-1\right]  & \text{if
}j\geq n>0\text{ and }n\text{ is even,}\\
0 & \text{otherwise,}%
\end{cases}\label{gamma2 gamma1 power c}\\
\left(  \gamma_{2}\gamma_{1}\right)  ^{j}\frac{\mathbf{s}_{n}}{n!}&=
\begin{cases}
\frac{\mathbf{c}_{n-j}}{\left(  n-j\right)  !} & \text{if }j\text{ is odd and
}j<n,\\
\frac{\mathbf{s}_{n-j}}{\left(  n-j\right)  !} & \text{if }j\text{ is even and
}j<n,\\
\left(  \gamma_{2}\gamma_{1}\right)  ^{j-n}\left[  f-1\right]  & \text{if
}j\geq n\text{ and }n\text{ is odd,}\\
0 & \text{otherwise.}
\end{cases}
 \label{gamma2 gamma1 power s}
\end{align}
\endgroup
\end{proposition}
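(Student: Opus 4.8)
The plan is to derive everything from the two master representations in Proposition \ref{Prop c_n s_n as formal powers}, namely $\mathbf{c}_n/n! = fY^{(2n-1)}$ or $f(\widetilde Y^{(2n)}+Y^{(2n-1)})$ according to the parity of $n$, together with the key identity from Proposition \ref{Prop gamma2gamma1} that $\gamma_2\gamma_1 y = \lambda(y - y(0))$ on solutions of \eqref{SL with der on the right}. First I would record the elementary facts: $\gamma_1 f = f^2 \frac{d}{dx}(1)=0$, and since $\mathbf{c}_1(x)=x\varphi_0(x)+\varphi_1(x)$... actually more cleanly, $\mathbf{c}_1 = f Y^{(1)}$ and $\mathbf{s}_1 = f(\widetilde Y^{(2)}+Y^{(1)})$ by \eqref{cn and sn for odd}; computing $Y^{(1)}$ and $\widetilde Y^{(2)}$ from the definitions in Corollary \ref{Cor SPPS for Auxiliary SL} gives $fY^{(1)}$ with $\gamma_1(fY^{(1)}) = f^2 (Y^{(1)})' = f^2\cdot \tfrac{1}{f^2}\cdot(\text{derivative of inner})$; a short direct computation yields $\gamma_1\mathbf{c}_1 = 1$ and $\gamma_1\mathbf{s}_1 = f^2$, whence $\gamma_2\gamma_1\mathbf{c}_1 = \gamma_2[1]=0$ and $\gamma_2\gamma_1\mathbf{s}_1 = \tfrac12\int_0^x \frac{(f^2)'}{f}\,ds = \int_0^x f'(s)\,ds = f-1$. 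This gives \eqref{gamma_1 c_0}--\eqref{gamma2 gamma1 c_0}.

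Next I would establish the two fundamental recursions \eqref{gamma2 gamma1 c_n} and \eqref{gamma2 gamma1 s_n}. The cleanest route is via the generating functions: from \eqref{v1}, \eqref{w1}, $v = e^{\lambda x}u = \sum_k \lambda^k(\mathbf{c}_k + \mathbf{s}_k)/k!$ and $w = e^{-\lambda x}u = \sum_k \lambda^k(\mathbf{c}_k - \mathbf{s}_k)/k!$ solve \eqref{SL with der on the right} with parameter $\lambda$ and $-\lambda$ respectively. Applying $\gamma_2\gamma_1$ termwise (justified by the uniform convergence asserted in Corollary \ref{Cor SPPS for Auxiliary SL} and Theorem \ref{ThSPPS_Pencil}, together with the $C^1$ extension \eqref{gamma2gamma1 C1}) and using $\gamma_2\gamma_1 v = \lambda(v - v(0)) = \lambda(v-1)$, we get
\begin{equation*}
\sum_{k=0}^\infty \lambda^k \gamma_2\gamma_1\frac{\mathbf{c}_k + \mathbf{s}_k}{k!} = \lambda\sum_{k=0}^\infty \lambda^k \frac{\mathbf{c}_k + \mathbf{s}_k}{k!} - \lambda = \sum_{k=1}^\infty \lambda^k\frac{\mathbf{c}_{k-1}+\mathbf{s}_{k-1}}{(k-1)!} - \lambda,
\end{equation*}
and the analogous identity for $w$ with $\lambda \mapsto -\lambda$. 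Adding and subtracting, and matching powers of $\lambda$, yields $\gamma_2\gamma_1(\mathbf{c}_k/k!) = \mathbf{s}_{k-1}/(k-1)!$ for $k\ge 1$ (the $-\lambda$ correction lands exactly on the $\mathbf{s}_0\equiv 0$ slot, consistent with $\gamma_2\gamma_1\mathbf{c}_1=0$) and $\gamma_2\gamma_1(\mathbf{s}_k/k!) = \mathbf{c}_{k-1}/(k-1)!$ for $k\ge 2$, while for $k=1$ the correction gives $\gamma_2\gamma_1\mathbf{s}_1 = f - 1 = \mathbf{c}_0 - 1$, matching \eqref{gamma2 gamma1 c_0}. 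This is \eqref{gamma2 gamma1 c_n}--\eqref{gamma2 gamma1 s_n}.

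For \eqref{gamma_1 at zero} I would note that $\gamma_1 g(x) = f^2(x)(g/f)'(x) = f(x)g'(x) - f'(x)g(x)$, so $\gamma_1 g(0) = g'(0) - f'(0)g(0)$; from the explicit forms \eqref{cm}, \eqref{sm}, $\mathbf{c}_n(0) = \varphi_n(0)$ for $n\ge 1$ has... more directly, one checks $\mathbf{c}_n(0)=0=\mathbf{c}_n'(0)$... actually the cleanest is: since $\gamma_2\gamma_1\mathbf{c}_n = n\mathbf{s}_{n-1}$ is a well-defined continuous function, and $\gamma_2$ is an integral from $0$ starting at value $0$, the fact that $\gamma_2\gamma_1\mathbf{c}_n$ is continuous forces (via \eqref{gamma2gamma1 C1} applied to $g=\mathbf{c}_n$, which shows $\tfrac12(\mathbf{c}_n'(x)-\mathbf{c}_n'(0)-\int_0^x q\mathbf{c}_n)$) nothing about $\gamma_1$ at $0$ by itself; instead I would compute $\gamma_1\mathbf{c}_n(0)$ directly from the series $\mathbf{c}_n/n! = fY^{(2n-1)}$ or $f(\widetilde Y^{(2n)}+Y^{(2n-1)})$ and the fact that all the $Y^{(\ge 1)}$, $\widetilde Y^{(\ge 1)}$ vanish at $0$, giving $\gamma_1(fY)(0) = f^2(0)(Y/1)'(0)=(Y^{(2n-1)})'(0)=0$ because $Y^{(2n-1)}$ for $n\ge 2$ has vanishing derivative at $0$ (its integrand $Y^{(2n-2)}/f^2$ vanishes at $0$ since $2n-2\ge 2$). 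Finally, \eqref{gamma2 gamma1 power c}--\eqref{gamma2 gamma1 power s} follow by straightforward induction on $j$: the first two cases iterate \eqref{gamma2 gamma1 c_n}--\eqref{gamma2 gamma1 s_n}, keeping track of the alternation $\mathbf{c}\leftrightarrow\mathbf{s}$; when the index is driven down to $\mathbf{c}_1$ or $\mathbf{s}_1$ one more application gives $0$ (for the $\mathbf{c}_1$ endpoint) or $f-1$ (for the $\mathbf{s}_1$ endpoint), and then the remaining $j-n$ applications act on $f-1$, yielding $(\gamma_2\gamma_1)^{j-n}[f-1]$; the parity bookkeeping determines exactly which starting function ($n$ even vs. odd) reaches the $\mathbf{s}_1$ endpoint. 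The main obstacle is not conceptual but organizational: making sure the termwise application of $\gamma_2\gamma_1$ to the SPPS series is legitimate — this needs the uniform convergence of the series of first derivatives, which is available from Theorem \ref{ThSPPS_Pencil} / Theorem \ref{ThGenSolSturmLiouville} — and carefully handling the off-by-one boundary terms (the "$-\lambda$" in the generating-function identity, and the exceptional cases $n=0,1$) so that the stated ranges of $n$ in \eqref{gamma2 gamma1 c_n}--\eqref{gamma2 gamma1 power s} come out exactly right.
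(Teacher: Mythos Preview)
Your proposal is correct. For \eqref{gamma_1 c_0}--\eqref{gamma2 gamma1 c_0}, \eqref{gamma_1 at zero}, and \eqref{gamma2 gamma1 power c}--\eqref{gamma2 gamma1 power s} you proceed essentially as the paper does: short direct computations from Proposition~\ref{Prop c_n s_n as formal powers} and the definitions of $Y^{(n)}$, $\widetilde Y^{(n)}$, followed by induction on $j$ for the iterated formulas.

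The genuine difference is in the core recursions \eqref{gamma2 gamma1 c_n}--\eqref{gamma2 gamma1 s_n}. The paper proves these by a direct calculation in the formal-power language: it writes $\mathbf{c}_n/n!$ uniformly as $\tfrac12 f\bigl((1+(-1)^n)\widetilde Y^{(2n)}+2Y^{(2n-1)}\bigr)$, applies $\gamma_1$ then $\gamma_2$ step by step using the recursive definitions of $\widetilde Y$ and $Y$, and recognizes the result as $\mathbf{s}_{n-1}/(n-1)!$. Your route instead exploits the generating-function identity $\gamma_2\gamma_1 v=\lambda(v-1)$ for $v=\sum_k\lambda^k(\mathbf{c}_k+\mathbf{s}_k)/k!$ (and the companion identity for $w$), then matches coefficients of $\lambda$. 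This is more conceptual and handles all $n$ at once, with the boundary correction $-\lambda$ neatly accounting for the exceptional cases $n=1$. The price is the need to justify the termwise application of $\gamma_2\gamma_1$, which via \eqref{gamma2gamma1 C1} reduces to uniform convergence of the derivative series for $v$ and $w$; that is indeed available from Theorem~\ref{ThGenSolSturmLiouville} (the Cauchy product of $e^{\pm\lambda x}$ with the SPPS series for $u$ inherits uniform convergence of first derivatives). The paper's computation avoids this analytic step entirely but is more laborious and requires tracking the parity-dependent formulas for $\mathbf{c}_n$ separately.
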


\begin{proof}
Consider
\[
\gamma_{1}\mathbf{s}_{1}=f^{2}\left(  \widetilde{Y}^{(2)}+Y^{(1)}\right)
^{\prime}=\widetilde{Y}^{(1)}+1.
\]
Here we used (\ref{cn and sn for odd}) and the definition of the formal powers
$\widetilde{Y}^{(n)}$ and $Y^{(n)}$. Note that
\[
\widetilde{Y}^{(1)}(x)=2\int_{0}^{x}f(s)f^{\prime}(s)ds=f^{2}(x)-1.
\]
Hence $\gamma_{1}\mathbf{s}_{1}=f^{2}$. The other two equalities in
(\ref{gamma_1 c_0}) can are proved in a similar way.

Equalities (\ref{gamma_1 at zero}) follow from Proposition
\ref{Prop c_n s_n as formal powers} and the definition of $\widetilde{Y}%
^{(n)}$ and $Y^{(n)}$.

Equalities (\ref{gamma2 gamma1 c_0}) follow trivially from (\ref{gamma_1 c_0}).

For $n=2,3,\ldots$ consider
\[
\gamma_{2}\gamma_{1}\frac{\mathbf{c}_{n}}{n!}=\frac{1}{2}\gamma_{2}\gamma
_{1}\left[  f\left(  \widetilde{Y}^{(2n)}+(-1)^{n}\widetilde{Y}^{(2n)}%
+2Y^{(2n-1)}\right)  \right]  .
\]
This is according to Proposition \ref{Prop c_n s_n as formal powers}. Hence
\begingroup
\allowdisplaybreaks
\begin{align*}
\gamma_{2}\gamma_{1}\frac{\mathbf{c}_{n}}{n!}  &  =\frac{1}{2}\gamma
_{2}\left[  f^{2}\left(  \left(  1+(-1)^{n}\right)  \widetilde{Y}%
^{(2n)}+2Y^{(2n-1)}\right)  ^{\prime}\right] \\
&  =\frac{1}{2}\gamma_{2}\left[  \left(  1+(-1)^{n}\right)  \widetilde
{Y}^{(2n-1)}+2Y^{(2n-2)}\right] \\
&  =\frac{1+(-1)^{n}}{2}\int_{0}^{x}\left(  f(s)\widetilde{Y}^{(2n-2)}%
(s)\right)  ^{\prime}ds+\int_{0}^{x}\left(  f(s)Y^{(2n-3)}(s)\right)
^{\prime}ds\\
&  =f(x)\left(  \frac{1+(-1)^{n}}{2}\widetilde{Y}^{(2n-2)}(x)+Y^{(2n-3)}%
(x)\right) \\
&  =\frac{1}{2}f(x)\left(  \left(  1-(-1)^{n-1}\right)  \widetilde
{Y}^{(2(n-1))}(x)+2Y^{(2\left(  n-1\right)  -1)}(x)\right) \\
&  =\frac{\mathbf{s}_{n-1}}{\left(  n-1\right)  !}.
\end{align*}
\endgroup
Equality (\ref{gamma2 gamma1 s_n}) can be proved in a similar way.

The proof of (\ref{gamma2 gamma1 power c}) and (\ref{gamma2 gamma1 power s})
follows from the preceding equalities.
\end{proof}

\begin{definition}
Functions of the form
\[
\mathbf{C}_{N}=%
{\displaystyle\sum\limits_{n=0}^{N}}
\alpha_{n}\frac{\mathbf{c}_{n}}{n!},
\]
where $\alpha_{n}$ are complex numbers, will be called $\mathbf{c}%
$-polynomials of order $N$, and functions of the form
\[
\mathbf{S}_{N}=%
{\displaystyle\sum\limits_{n=1}^{N}}
\beta_{n}\frac{\mathbf{s}_{n}}{n!},
\]
where $\beta_{n}$ are complex numbers, will be called $\mathbf{s}$-polynomials
of order $N$.
\end{definition}

We stress that in general $\mathbf{C}_{N}$ and $\mathbf{S}_{N}$ are not, of
course, polynomials.

\begin{proposition}
Let $\mathbf{C}_{N}$ be a $\mathbf{c}$-polynomial of order $N$ then its
coefficients are uniquely determined by the relations $\alpha_{0}%
=\mathbf{C}_{N}(0)$ and
\begin{equation}
\alpha_{j+1}=\gamma_{1}\left(  \gamma_{2}\gamma_{1}\right)  ^{j}\mathbf{C}%
_{N}(0)+\sum_{\text{even }n=2}^{j}\alpha_{n}\gamma_{1}\left(  \gamma_{2}%
\gamma_{1}\right)  ^{j-n}\left[  1\right]  (0),\qquad j=0,1,2,\ldots,N-1.
\label{alpha j+1}%
\end{equation}
\end{proposition}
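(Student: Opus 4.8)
The plan is to recover the coefficients $\alpha_{n}$ one by one by applying the operators $\gamma_{1}(\gamma_{2}\gamma_{1})^{j}$ to $\mathbf{C}_{N}$ and evaluating the result at the origin. First I would settle the base step: by \eqref{cm} we have $\mathbf{c}_{0}=f$, whereas for $m\geq 1$ only the summand with $k=0$ survives at $x=0$ and $\varphi_{m}(0)=f(0)X^{(m)}(0)=0$, so $\mathbf{c}_{0}(0)=f(0)=1$ and $\mathbf{c}_{m}(0)=0$ for $m\geq 1$; hence $\mathbf{C}_{N}(0)=\alpha_{0}$. The case $j=0$ of \eqref{alpha j+1} then reads $\alpha_{1}=\gamma_{1}\mathbf{C}_{N}(0)$, which follows at once from $\gamma_{1}\mathbf{c}_{0}=0$, $\gamma_{1}\mathbf{c}_{1}=1$ (see \eqref{gamma_1 c_0}) and $\gamma_{1}\mathbf{c}_{n}(0)=0$ for $n\geq 2$ (see \eqref{gamma_1 at zero}).

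For a fixed $j\in\{0,\dots,N-1\}$, linearity gives $\gamma_{1}(\gamma_{2}\gamma_{1})^{j}\mathbf{C}_{N}(0)=\sum_{n=0}^{N}\alpha_{n}\,\gamma_{1}(\gamma_{2}\gamma_{1})^{j}\tfrac{\mathbf{c}_{n}}{n!}(0)$, and I would evaluate each summand by means of \eqref{gamma2 gamma1 power c}, splitting the index range into three parts. If $n\geq j+2$, then $j<n$ and $(\gamma_{2}\gamma_{1})^{j}\tfrac{\mathbf{c}_{n}}{n!}$ equals $\tfrac{\mathbf{s}_{n-j}}{(n-j)!}$ or $\tfrac{\mathbf{c}_{n-j}}{(n-j)!}$ with $n-j\geq 2$, so by \eqref{gamma_1 at zero} the summand vanishes. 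If $n=j+1$, then $(\gamma_{2}\gamma_{1})^{j}\tfrac{\mathbf{c}_{n}}{n!}$ equals $\mathbf{s}_{1}$ (for odd $j$) or $\mathbf{c}_{1}$ (for even $j$), and since $\gamma_{1}\mathbf{s}_{1}(0)=f^{2}(0)=1=\gamma_{1}\mathbf{c}_{1}(0)$ by \eqref{gamma_1 c_0}, this summand is exactly $\alpha_{j+1}$.

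It remains to treat the range $n\leq j$, which is where the correction sum in \eqref{alpha j+1} comes from. For $n=0$ or $n$ odd, \eqref{gamma2 gamma1 power c} (together with $\gamma_{1}\mathbf{c}_{0}=\gamma_{1}f\equiv 0$, which also covers the borderline case $n=j=0$) makes the summand vanish. For even $n$ with $2\leq n\leq j$, \eqref{gamma2 gamma1 power c} gives $(\gamma_{2}\gamma_{1})^{j}\tfrac{\mathbf{c}_{n}}{n!}=(\gamma_{2}\gamma_{1})^{j-n}[f-1]$, and here I would invoke the elementary identity $\gamma_{1}(\gamma_{2}\gamma_{1})^{m}[f-1]=-\gamma_{1}(\gamma_{2}\gamma_{1})^{m}[1]$, valid for every $m\geq 0$ because $\gamma_{1}f\equiv 0$ (so $(\gamma_{2}\gamma_{1})^{m}[f]$ is $f$ for $m=0$, killed by the subsequent $\gamma_{1}$, and is $0$ for $m\geq 1$). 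Adding the three ranges yields $\gamma_{1}(\gamma_{2}\gamma_{1})^{j}\mathbf{C}_{N}(0)=\alpha_{j+1}-\sum_{\text{even }n=2}^{j}\alpha_{n}\,\gamma_{1}(\gamma_{2}\gamma_{1})^{j-n}[1](0)$, which is \eqref{alpha j+1} after transposing the sum. Uniqueness then follows by induction on $j$, since the right-hand side of \eqref{alpha j+1} depends only on $\mathbf{C}_{N}$ and on $\alpha_{0},\dots,\alpha_{j}$, already pinned down at earlier steps.

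The step I expect to be the main obstacle is the careful bookkeeping in the second and third paragraphs: determining precisely which of the $N+1$ summands survive evaluation at $x=0$ (only the $n=j+1$ term and the even terms with $n\leq j$) and reconciling the boundary ``$f-1$'' contributions with the ``$[1]$'' contributions appearing in \eqref{alpha j+1}. I would also note that throughout one tacitly assumes $q$ to be regular enough (in the sense of Remark \ref{Rem gamma2gamma1 1}) for all the generalized derivatives $(\gamma_{2}\gamma_{1})^{j}$ occurring above to be well defined; under that assumption Proposition \ref{Prop Properties of c_n and s_n} guarantees that every function to which $\gamma_{1}$ or $\gamma_{2}\gamma_{1}$ is applied has the smoothness required.
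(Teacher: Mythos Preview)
Your proposal is correct and follows essentially the same route as the paper's proof: both compute $\gamma_{1}(\gamma_{2}\gamma_{1})^{j}\mathbf{C}_{N}$ term by term via \eqref{gamma2 gamma1 power c}, evaluate at the origin using \eqref{gamma_1 c_0} and \eqref{gamma_1 at zero}, and convert the $[f-1]$ contributions into $-[1]$ contributions by means of $\gamma_{1}f\equiv 0$. Your version is simply more explicit about the three index ranges and the borderline cases, whereas the paper treats the odd-$j$ case in detail and declares the even-$j$ case analogous.
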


\begin{proof}
Assume $j<N$ being odd. Then due to (\ref{gamma2 gamma1 power c}),
\begin{align*}
\left(  \gamma_{2}\gamma_{1}\right)  ^{j}\mathbf{C}_{N}(x)  &  =\sum_{n=0}^{N}
\alpha_{n}\left(  \gamma_{2}\gamma_{1}\right)  ^{j}\frac{\mathbf{c}_{n}
(x)}{n!}\\
&  =\sum_{\text{even }n=2}^{j}\alpha_{n}\left(  \gamma_{2}\gamma_{1}\right)
^{j-n}\left[  f-1\right]  (x)+\sum_{n=j+1}^{N}\alpha_{n}\frac{\mathbf{s}%
_{n-j}(x)}{\left(  n-j\right)  !}.
\end{align*}
Now applying to this equality $\gamma_{1}$ and considering it at $x=0$ we
obtain
\begin{align*}
\gamma_{1}\left(  \gamma_{2}\gamma_{1}\right)  ^{j}\mathbf{C}_{N}(0)  &
=\sum_{\text{even }n=2}^{j}\alpha_{n}\gamma_{1}\left(  \gamma_{2}\gamma
_{1}\right)  ^{j-n}\left[  f-1\right]  (0)+\sum_{n=j+1}^{N}\alpha_{n}%
\frac{\gamma_{1}\mathbf{s}_{n-j}(0)}{\left(  n-j\right)  !}\\
&  =\sum_{\text{even }n=2}^{j}\alpha_{n}\gamma_{1}\left(  \gamma_{2}\gamma
_{1}\right)  ^{j-n}\left[  f-1\right]  (0)+\alpha_{j+1}.
\end{align*}
Finally, observing that $\gamma_{1}f\equiv0$ we obtain (\ref{alpha j+1}). The
proof of (\ref{alpha j+1}) for any even $j$ is similar. The recursive relation
(\ref{alpha j+1}) implies the uniqueness of the coefficients $\alpha_{n}$.
\end{proof}

Analogously the statement concerning $\mathbf{s}$-polynomials is proved.

\begin{proposition}
Let $\mathbf{S}_{N}$ be an $\mathbf{s}$-polynomial of order $N$ then its
coefficients are uniquely determined by the relations%
\[
\beta_{j+1}=\gamma_{1}\left(  \gamma_{2}\gamma_{1}\right)  ^{j}\mathbf{S}%
_{N}(0)+\sum_{\text{odd }n=1}^{j}\beta_{n}\gamma_{1}\left(  \gamma_{2}%
\gamma_{1}\right)  ^{j-n}\left[  1\right]  (0),\qquad j=0,1,2,\ldots
,N-1.
\]
\end{proposition}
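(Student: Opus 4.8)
The plan is to mirror exactly the proof just given for the $\mathbf{c}$-polynomial case, with the roles of $\mathbf{c}_n$ and $\mathbf{s}_n$ interchanged according to the parity swap in \eqref{gamma2 gamma1 power c} and \eqref{gamma2 gamma1 power s}. First I would apply $(\gamma_2\gamma_1)^j$ to $\mathbf{S}_N = \sum_{n=1}^N \beta_n \mathbf{s}_n/n!$ for a fixed $j$ with $0\le j<N$, using \eqref{gamma2 gamma1 power s} term by term. The key observation is that in \eqref{gamma2 gamma1 power s} the surviving $\gamma_2\gamma_1$-iterates of $f-1$ now come from the \emph{odd} indices $n$ (those with $j\ge n$, $n$ odd), while for $n>j$ one gets $\mathbf{c}_{n-j}/(n-j)!$ when $j$ is odd and $\mathbf{s}_{n-j}/(n-j)!$ when $j$ is even. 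Thus for, say, odd $j$,
\[
\left(\gamma_2\gamma_1\right)^j\mathbf{S}_N(x)=\sum_{\text{odd }n=1}^{j}\beta_n\left(\gamma_2\gamma_1\right)^{j-n}\left[f-1\right](x)+\sum_{n=j+1}^N\beta_n\frac{\mathbf{c}_{n-j}(x)}{(n-j)!}.
\]

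Next I would apply $\gamma_1$ to this identity and evaluate at $x=0$. By \eqref{gamma_1 at zero} all the terms $\gamma_1\mathbf{c}_{n-j}(0)$ with $n-j\ge 2$, i.e.\ $n\ge j+2$, vanish, and the single term with $n=j+1$ contributes $\gamma_1\mathbf{c}_1=1$ (from \eqref{gamma_1 c_0}), leaving $\beta_{j+1}$. Since $\gamma_1 f\equiv 0$, we may replace $\gamma_1(\gamma_2\gamma_1)^{j-n}[f-1](0)$ by $\gamma_1(\gamma_2\gamma_1)^{j-n}[1](0)$, and we arrive at
\[
\beta_{j+1}=\gamma_1\left(\gamma_2\gamma_1\right)^j\mathbf{S}_N(0)+\sum_{\text{odd }n=1}^{j}\beta_n\,\gamma_1\left(\gamma_2\gamma_1\right)^{j-n}\left[1\right](0),
\]
which is the asserted recursion. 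The case of even $j$ is handled identically, the only change being that the second sum on the right of the displayed formula for $(\gamma_2\gamma_1)^j\mathbf{S}_N$ involves $\mathbf{s}_{n-j}/(n-j)!$ instead of $\mathbf{c}_{n-j}/(n-j)!$; applying $\gamma_1$ and using the other half of \eqref{gamma_1 at zero} together with $\gamma_1\mathbf{s}_1=f^2$ and $f(0)=1$ again isolates $\beta_{j+1}$. Finally, reading the recursion for $j=0,1,\dots,N-1$ in order determines $\beta_1,\beta_2,\dots,\beta_N$ successively (note $\mathbf{S}_N(0)=0$, so $j=0$ gives $\beta_1=\gamma_1\mathbf{S}_N(0)$), which proves uniqueness of the coefficients.

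I do not expect any genuine obstacle here: the statement is the verbatim $\mathbf{s}$-analogue of the preceding proposition and the machinery is already in place in Proposition~\ref{Prop Properties of c_n and s_n}. The only point requiring a little care is bookkeeping of the parity ranges in the sums (which indices $n$ survive in which of the four cases of \eqref{gamma2 gamma1 power s}), and checking that the boundary term at $n=j+1$ always lands in the regime $n-j=1$ so that the normalization constant is exactly $1$ after applying $\gamma_1$. Accordingly, the write-up can reasonably be compressed to "the proof repeats that of the previous proposition with $\mathbf{c}_n$ and $\mathbf{s}_n$ interchanged, using \eqref{gamma2 gamma1 power s} in place of \eqref{gamma2 gamma1 power c}", which is presumably why the authors state it as "Analogously the statement concerning $\mathbf{s}$-polynomials is proved."
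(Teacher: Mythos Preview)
Your proposal is correct and follows exactly the approach the paper intends: the paper itself gives no separate argument for this proposition, merely stating ``Analogously the statement concerning $\mathbf{s}$-polynomials is proved,'' and your write-up is precisely that analogous computation using \eqref{gamma2 gamma1 power s} in place of \eqref{gamma2 gamma1 power c}. One tiny wording slip: when you say ``replace $\gamma_1(\gamma_2\gamma_1)^{j-n}[f-1](0)$ by $\gamma_1(\gamma_2\gamma_1)^{j-n}[1](0)$'' there is actually a sign change (it becomes $-\gamma_1(\gamma_2\gamma_1)^{j-n}[1](0)$), but this sign is exactly what produces the $+$ in the final recursion after solving for $\beta_{j+1}$, so your conclusion is correct.
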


\begin{lemma}
\label{Lemma R(0)}Let the complex valued function $F$ of the variable $x$ be
such that in the neighborhood of $x=0$ its generalized derivatives $\gamma
_{1}\left(  \gamma_{2}\gamma_{1}\right)  ^{j}F$ exist for $j=0,\ldots N-1$ and
are continuous at $x=0$. Let $\mathbf{C}_{N}$ be a $\mathbf{c}$-polynomial
with the coefficients of the form $\alpha_{0}=F(0)$ and
\begin{equation}
\alpha_{j+1}=\gamma_{1}\left(  \gamma_{2}\gamma_{1}\right)  ^{j}%
F(0)+\sum_{\text{even }n=2}^{j}\alpha_{n}\gamma_{1}\left(  \gamma_{2}%
\gamma_{1}\right)  ^{j-n}\left[  1\right]  (0),\qquad j=0,1,2,\ldots.
\label{alpha j+1 F}%
\end{equation}
Then for the function $R:=F-\mathbf{C}_{N}$ the following relations are valid%
\begin{equation}
R(0)=\gamma_{1}R(0)=\ldots=\gamma_{1}\left(  \gamma_{2}\gamma_{1}\right)
^{N-1}R(0)=0. \label{Rat0}%
\end{equation}

\end{lemma}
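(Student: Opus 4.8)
The plan is to verify \eqref{Rat0} by directly applying the generalized derivatives $\gamma_1(\gamma_2\gamma_1)^j$ to $R = F - \mathbf{C}_N$ for $j = 0, 1, \ldots, N-1$ and checking that each vanishes at $x=0$, exploiting the recursive structure of the coefficients $\alpha_{j+1}$ in \eqref{alpha j+1 F}. The key observation is that $R(0) = F(0) - \mathbf{C}_N(0) = F(0) - \alpha_0 = 0$ by the very definition of $\alpha_0$, which handles the base case.

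For the inductive step, fix $j$ with $0 \le j \le N-1$; to be concrete take $j$ odd (the even case is symmetric, using the $\mathbf{c}_n \leftrightarrow \mathbf{s}_n$ swap in Proposition \ref{Prop Properties of c_n and s_n}). First I would compute $(\gamma_2\gamma_1)^j \mathbf{C}_N$ term by term using \eqref{gamma2 gamma1 power c}: the terms with $n \le j$ contribute $\sum_{\text{even }n=2}^j \alpha_n (\gamma_2\gamma_1)^{j-n}[f-1]$ (the odd-$n$ terms with $n \le j$ and the $n=0,1$ terms drop out), and the terms with $n > j$ contribute $\sum_{n=j+1}^N \alpha_n \mathbf{s}_{n-j}/(n-j)!$. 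Then apply $\gamma_1$ and evaluate at $x=0$. Since $\gamma_1 f \equiv 0$, the first sum becomes $\sum_{\text{even }n=2}^j \alpha_n \gamma_1(\gamma_2\gamma_1)^{j-n}[-1](0) = -\sum_{\text{even }n=2}^j \alpha_n \gamma_1(\gamma_2\gamma_1)^{j-n}[1](0)$; and by \eqref{gamma_1 at zero} only the $n-j=1$ term of the second sum survives, giving exactly $\alpha_{j+1}$. Hence
\[
\gamma_1(\gamma_2\gamma_1)^j \mathbf{C}_N(0) = \alpha_{j+1} - \sum_{\text{even }n=2}^j \alpha_n \gamma_1(\gamma_2\gamma_1)^{j-n}[1](0),
\]
which by \eqref{alpha j+1 F} equals $\gamma_1(\gamma_2\gamma_1)^j F(0)$. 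Therefore $\gamma_1(\gamma_2\gamma_1)^j R(0) = \gamma_1(\gamma_2\gamma_1)^j F(0) - \gamma_1(\gamma_2\gamma_1)^j \mathbf{C}_N(0) = 0$.

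The main subtlety — and the step I would be most careful about — is the bookkeeping of which terms survive in the two sums, i.e. correctly applying the case distinction in \eqref{gamma2 gamma1 power c}: one must check that for $j$ odd, the even-$n$ terms with $2 \le n \le j$ land in the $(\gamma_2\gamma_1)^{j-n}[f-1]$ branch, the odd-$n$ terms with $1 \le n \le j$ land in the "$0$ otherwise" branch, and the $n > j$ terms give $\mathbf{s}_{n-j}/(n-j)!$ since $j < n$ and $j$ odd. One should also note that the hypothesis guarantees $\gamma_1(\gamma_2\gamma_1)^j F$ is defined and continuous at $0$ for $j \le N-1$, so all the evaluations at $x=0$ make sense, and that the quantities $\gamma_1(\gamma_2\gamma_1)^{j-n}[1](0)$ appearing are well-defined (here $j - n \le N-1$, and these are the functions discussed in Remark \ref{Rem gamma2gamma1 1}). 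The even-$j$ case is entirely analogous: one uses the first line of \eqref{gamma2 gamma1 power c} combined with $(\gamma_2\gamma_1)^j \mathbf{c}_n/n! = \mathbf{c}_{n-j}/(n-j)!$ for $j$ even, and $\gamma_1 \mathbf{c}_{n-j}(0) = 0$ for $n - j \ge 2$ from \eqref{gamma_1 at zero}, together with the corresponding even-$j$ version of \eqref{alpha j+1 F}.
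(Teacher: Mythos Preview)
Your proposal is correct and follows essentially the same approach as the paper: compute $\gamma_1(\gamma_2\gamma_1)^j\mathbf{C}_N(0)$ via \eqref{gamma2 gamma1 power c} to obtain $\alpha_{j+1}-\sum_{\text{even }n=2}^{j}\alpha_n\gamma_1(\gamma_2\gamma_1)^{j-n}[1](0)$, then subtract from $\gamma_1(\gamma_2\gamma_1)^j F(0)$ and use \eqref{alpha j+1 F}. The paper's proof is more terse because this computation was already carried out in the proof of the preceding proposition and is simply cited there, whereas you spell it out again; the substance is identical.
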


\begin{proof}
Obviously, $R(0)=F(0)-\mathbf{C}_{N}(0)=F(0)-\alpha_{0}=0$ and $\gamma
_{1}R(0)=\gamma_{1}F(0)-\gamma_{1}\mathbf{C}_{N}(0)=\gamma_{1}F(0)-\alpha
_{1}=0$. For any $j=1,\ldots N-1$ we obtain%
\begin{align*}
\gamma_{1}\left(  \gamma_{2}\gamma_{1}\right)  ^{j}R(0)  &  =\gamma_{1}\left(
\gamma_{2}\gamma_{1}\right)  ^{j}F(0)-\gamma_{1}\left(  \gamma_{2}\gamma
_{1}\right)  ^{j}\mathbf{C}_{N}(0)\\
&  =\gamma_{1}\left(  \gamma_{2}\gamma_{1}\right)  ^{j}F(0)-\alpha_{j+1}%
+\sum_{\text{even }n=2}^{j}\alpha_{n}\gamma_{1}\left(  \gamma_{2}\gamma
_{1}\right)  ^{j-n}\left[  1\right]  (0)=0
\end{align*}
due to (\ref{alpha j+1 F}).
\end{proof}

A similar statement is true for $\mathbf{s}$-polynomials.

\begin{lemma}
\label{Lemma R(0) beta}Let the complex valued function $F$ of the variable $x$
be such that in the neighborhood of $x=0$ there exist $\gamma_{1}\left(
\gamma_{2}\gamma_{1}\right)  ^{j}F$ for $j=0,\ldots N-1$ and are continuous at
$x=0$, $F(0)=0$. Let $\mathbf{S}_{N}$ be an $\mathbf{s}$-polynomial with the
coefficients of the form
\begin{equation}
\beta_{j+1}=\gamma_{1}\left(  \gamma_{2}\gamma_{1}\right)  ^{j}F(0)+\sum
_{\text{odd }n=1}^{j}\beta_{n}\gamma_{1}\left(  \gamma_{2}\gamma_{1}\right)
^{j-n}\left[  1\right]  (0),\qquad j=0,1,2,\ldots,N-1. \label{beta j+1 F}%
\end{equation}
Then for the function $R:=F-\mathbf{S}_{N}$ the following relations are valid%
\[
R(0)=\gamma_{1}R(0)=\ldots=\gamma_{1}\left(  \gamma_{2}\gamma_{1}\right)
^{N-1}R(0)=0.
\]

\end{lemma}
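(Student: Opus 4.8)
The plan is to mirror, essentially verbatim, the argument already used for Lemma \ref{Lemma R(0)}, making only the adjustments forced by the parity shift from $\mathbf{c}$-polynomials to $\mathbf{s}$-polynomials. First I would record the base cases: since $F(0)=0$ and $\mathbf{S}_N(0)=0$ (because $\mathbf{s}_0\equiv 0$ and each $\mathbf{s}_n(0)=0$ for $n\ge 1$, as $\mathbf{s}_n$ is built from $x^k\varphi_{n-k}$ with $k\ge 1$), we get $R(0)=0$; and $\gamma_1 R(0)=\gamma_1 F(0)-\gamma_1\mathbf{S}_N(0)=\gamma_1 F(0)-\beta_1=0$ by the $j=0$ instance of \eqref{beta j+1 F}, using that $\gamma_1\mathbf{s}_1=f^2$ so $\gamma_1\mathbf{s}_1(0)=1$ while $\gamma_1\mathbf{s}_n(0)=0$ for $n\ge 2$ by \eqref{gamma_1 at zero}.

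Next, for a general $j$ with $1\le j\le N-1$, I would compute $\left(\gamma_2\gamma_1\right)^j\mathbf{S}_N$ using the explicit formula \eqref{gamma2 gamma1 power s}: applying $\left(\gamma_2\gamma_1\right)^j$ termwise to $\sum_{n=1}^{N}\beta_n\frac{\mathbf{s}_n}{n!}$ kills the terms with $n<j$ of the wrong parity, turns the terms with $n<j$ of the right parity (namely $n$ odd when $j$ is odd, $n$ even when $j$ is even — wait, more precisely the surviving low-order terms are those with $n$ odd, giving $\left(\gamma_2\gamma_1\right)^{j-n}[f-1]$ contributions), and converts the terms with $n>j$ into $\frac{\mathbf{c}_{n-j}}{(n-j)!}$ or $\frac{\mathbf{s}_{n-j}}{(n-j)!}$ according as $j$ is odd or even. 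Then I apply $\gamma_1$ to the resulting expression and evaluate at $x=0$. By \eqref{gamma_1 at zero} every term $\frac{\mathbf{c}_{m}}{m!}$ or $\frac{\mathbf{s}_{m}}{m!}$ with $m\ge 2$ contributes nothing; the single $m=1$ term survives with coefficient $\beta_{j+1}$ (using $\gamma_1\mathbf{c}_1=1$ in the odd-$j$ case and $\gamma_1\mathbf{s}_1=f^2$, so $\gamma_1\mathbf{s}_1(0)=1$, in the even-$j$ case); and the low-order sum contributes $\sum_{\text{odd }n=1}^{j}\beta_n\,\gamma_1\left(\gamma_2\gamma_1\right)^{j-n}[f-1](0)=\sum_{\text{odd }n=1}^{j}\beta_n\,\gamma_1\left(\gamma_2\gamma_1\right)^{j-n}[1](0)$, the last equality because $\gamma_1 f\equiv 0$ and $\gamma_1$ is linear. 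Comparing with the definition \eqref{beta j+1 F} of $\beta_{j+1}$ gives $\gamma_1\left(\gamma_2\gamma_1\right)^j\mathbf{S}_N(0)=\gamma_1\left(\gamma_2\gamma_1\right)^j F(0)$, hence $\gamma_1\left(\gamma_2\gamma_1\right)^j R(0)=0$.

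I do not expect a genuine obstacle here; the only thing requiring care is bookkeeping the parity conditions so that the indices in \eqref{gamma2 gamma1 power s} line up correctly with the sum $\sum_{\text{odd }n=1}^{j}$ appearing in \eqref{beta j+1 F}, and checking that the hypotheses on $F$ (existence and continuity at $0$ of $\gamma_1\left(\gamma_2\gamma_1\right)^j F$ for $j\le N-1$) are exactly what is needed for each displayed quantity to make sense. Since the computation is structurally identical to that of Lemma \ref{Lemma R(0)}, the cleanest writeup is simply to say so and indicate the parity changes. Concretely:

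\begin{proof}
Since $\mathbf{s}_n(0)=0$ for all $n$, we have $\mathbf{S}_N(0)=0$, hence $R(0)=F(0)-\mathbf{S}_N(0)=0$. Next, by \eqref{gamma_1 c_0} and \eqref{gamma_1 at zero} we have $\gamma_1\mathbf{s}_1(0)=f^2(0)=1$ and $\gamma_1\mathbf{s}_n(0)=0$ for $n\ge 2$, so $\gamma_1\mathbf{S}_N(0)=\beta_1$, and the case $j=0$ of \eqref{beta j+1 F} gives $\gamma_1 R(0)=\gamma_1 F(0)-\beta_1=0$. For $1\le j\le N-1$, applying $\left(\gamma_2\gamma_1\right)^j$ termwise to $\mathbf{S}_N=\sum_{n=1}^N\beta_n\frac{\mathbf{s}_n}{n!}$ and using \eqref{gamma2 gamma1 power s},
\[
\left(\gamma_2\gamma_1\right)^j\mathbf{S}_N(x)=\sum_{\text{odd }n=1}^{j}\beta_n\left(\gamma_2\gamma_1\right)^{j-n}[f-1](x)+\sum_{n=j+1}^{N}\beta_n\,\frac{\mathbf{z}_{n-j}(x)}{(n-j)!},
\]
where $\mathbf{z}_m=\mathbf{c}_m$ if $j$ is odd and $\mathbf{z}_m=\mathbf{s}_m$ if $j$ is even. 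Applying $\gamma_1$ and evaluating at $x=0$, all terms with $n-j\ge 2$ vanish by \eqref{gamma_1 at zero}, the term $n=j+1$ yields $\beta_{j+1}$ (using $\gamma_1\mathbf{c}_1=1$, resp. $\gamma_1\mathbf{s}_1(0)=1$), and since $\gamma_1 f\equiv 0$ the low-order sum equals $\sum_{\text{odd }n=1}^{j}\beta_n\,\gamma_1\left(\gamma_2\gamma_1\right)^{j-n}[1](0)$. Hence by \eqref{beta j+1 F},
\[
\gamma_1\left(\gamma_2\gamma_1\right)^j\mathbf{S}_N(0)=\beta_{j+1}+\sum_{\text{odd }n=1}^{j}\beta_n\,\gamma_1\left(\gamma_2\gamma_1\right)^{j-n}[1](0)=\gamma_1\left(\gamma_2\gamma_1\right)^j F(0),
\]
so that $\gamma_1\left(\gamma_2\gamma_1\right)^j R(0)=0$ for $j=0,1,\ldots,N-1$.
\end{proof}
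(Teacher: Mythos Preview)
Your approach is correct and is exactly the one the paper intends (the paper does not spell out the proof; after proving Lemma~\ref{Lemma R(0)} it simply says that the analogous statement for $\mathbf{s}$-polynomials is proved similarly). There is, however, a sign slip in your write-up: since $\gamma_1 f\equiv 0$, one has $\gamma_1(\gamma_2\gamma_1)^{j-n}[f-1]=-\gamma_1(\gamma_2\gamma_1)^{j-n}[1]$, not $+\gamma_1(\gamma_2\gamma_1)^{j-n}[1]$. With the correct sign the displayed identity reads
\[
\gamma_1(\gamma_2\gamma_1)^j\mathbf{S}_N(0)=\beta_{j+1}-\sum_{\text{odd }n=1}^{j}\beta_n\,\gamma_1(\gamma_2\gamma_1)^{j-n}[1](0),
\]
and then \eqref{beta j+1 F}, which rearranges to $\gamma_1(\gamma_2\gamma_1)^jF(0)=\beta_{j+1}-\sum_{\text{odd }n=1}^{j}\beta_n\,\gamma_1(\gamma_2\gamma_1)^{j-n}[1](0)$, gives the desired equality $\gamma_1(\gamma_2\gamma_1)^j\mathbf{S}_N(0)=\gamma_1(\gamma_2\gamma_1)^jF(0)$. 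With this correction your proof is complete.
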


\begin{lemma}
\label{Lemma o(x)} Suppose the complex valued function $R$ of the variable $x$
be such that in the neighborhood of $x=0$ there exist $\gamma_{1}\left(
\gamma_{2}\gamma_{1}\right)  ^{j}R$ for $j=0,\ldots N-1$ and are continuous at
$x=0$. Let
\[
R(0)=\gamma_{1}R(0)=\ldots=\gamma_{1}\left(  \gamma_{2}\gamma_{1}\right)
^{n}R(0)=0.
\]
Then $R(x)=o(x^{n+1}),\quad x\rightarrow0$.
\end{lemma}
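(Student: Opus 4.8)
The statement to prove is Lemma \ref{Lemma o(x)}: if $R(0)=\gamma_1 R(0)=\cdots=\gamma_1(\gamma_2\gamma_1)^n R(0)=0$ (with the relevant generalized derivatives existing and continuous near $x=0$), then $R(x)=o(x^{n+1})$ as $x\to 0$. Let me think about how the generalized derivatives interact with vanishing orders.

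# Proof plan

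The plan is to proceed by induction on $n$, showing that the vanishing of the first $n+1$ generalized derivatives of $R$ at the origin forces $R(x) = o(x^{n+1})$. The engine of the induction is the observation that $\gamma_1$ and $\gamma_2$ each "raise the vanishing order by one": if $g(x) = o(x^k)$ as $x\to 0$, then since $\gamma_2 g(x) = \frac12\int_0^x g'(s)/f(s)\,ds$ integrates a quantity (after one differentiation, which lowers the order by one) against the smooth nonvanishing $1/f$, one gets $\gamma_2 g(x) = o(x^k)$ again — more precisely, differentiation turns $o(x^k)$-type control into $o(x^{k-1})$ control and integration restores one power, while $\gamma_1 g(x) = f^2(x)\,\tfrac{d}{dx}(g(x)/f(x))$ similarly behaves like a derivative, taking $o(x^k)$ to $o(x^{k-1})$. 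One has to be slightly careful: these are statements about $o$-behavior of the functions themselves, and $\gamma_2$ involves $g'$, so the clean bookkeeping is to track the composite $\gamma_2\gamma_1$, which by Proposition \ref{Prop gamma2gamma1} acts (on $C^1$) as $\gamma_2\gamma_1 g(x) = \tfrac12(g'(x) - g'(0) - \int_0^x q(s)g(s)\,ds)$, i.e. essentially "subtract the linear part and apply a bounded operation to $g$". This shows $\gamma_2\gamma_1$ sends $o(x^{k})$ (with appropriate one-sided derivative control) to $o(x^{k-1})$ only via the $g'$ term, so the right statement is that $\gamma_1(\gamma_2\gamma_1)^j$ behaves like a $(2j+1)$-st order differential operation.

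Concretely, the key step will be the base case and the inductive mechanism together. First I would observe that $R(0) = \gamma_1 R(0) = 0$ already gives, by the definition $\gamma_1 R = f^2 (R/f)'$ and continuity of $\gamma_1 R$ at $0$, that $(R/f)'(0) = 0$ and $(R/f)(0) = 0$; since $f$ is $C^1$ and nonvanishing with $f(0)=1$, Taylor's theorem (Peano form) for $R/f$ gives $R(x) = f(x)\cdot o(x) = o(x)$, settling $n=0$. For the inductive step, suppose the claim holds for $n-1$. Given $R$ with $R(0) = \cdots = \gamma_1(\gamma_2\gamma_1)^n R(0) = 0$, set $\widetilde R := \gamma_2\gamma_1 R$. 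Then $\widetilde R(0) = \gamma_2\gamma_1 R(0) = 0$ (as $\gamma_2$ of anything vanishes at $0$), and for $j = 0,\ldots,n-1$ we have $\gamma_1(\gamma_2\gamma_1)^j \widetilde R = \gamma_1(\gamma_2\gamma_1)^{j+1} R$, which vanishes at $0$ by hypothesis. Hence by the induction hypothesis $\widetilde R(x) = o(x^n)$. It remains to deduce $R(x) = o(x^{n+1})$ from $\gamma_2\gamma_1 R(x) = o(x^n)$ together with $R(0) = \gamma_1 R(0) = 0$.

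This last deduction is where the real work lies, and I expect it to be the main obstacle. Using \eqref{gamma2gamma1 C1}, $\gamma_2\gamma_1 R(x) = \tfrac12(R'(x) - R'(0) - \int_0^x q(s)R(s)\,ds) = o(x^n)$; since $R$ is continuous and vanishes at $0$ the integral term is $O(x^2)\subseteq o(x)$ at worst but generally much smaller once we know more — I would bootstrap. From $R(0) = 0$ and $\gamma_1 R(0)=0$ we get $R'(0) = (f'/f)(0)R(0) + \gamma_1R(0)/f^2(0) \cdot \ldots$; cleanly, $\gamma_1 R = fR' - f'R$, so $\gamma_1 R(0) = R'(0)$, hence $R'(0)=0$. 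Thus $R'(x) = 2\gamma_2\gamma_1 R(x) + \int_0^x q(s)R(s)\,ds$. Starting from the crude bound $R(x) = O(x)$ (from $R(0)=R'(0)=0$ and $R\in C^1$, actually $R(x)=o(x)$), one feeds this into the integral to get $\int_0^x qR = o(x^2)$, hence $R'(x) = o(x^n) + o(x^2)$; integrating, $R(x) = o(x^{\min(n+1,3)})$. Iterating this bootstrap $\lceil (n+1)/2\rceil$ times the integral term's contribution is eventually dominated by the $o(x^n)$ term, yielding $R'(x) = o(x^n)$ and therefore $R(x) = \int_0^x R'(s)\,ds = o(x^{n+1})$. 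The care needed is that each bootstrap step is just "if $R = o(x^k)$ then $\int_0^x qR\,ds = o(x^{k+1})$, so $R' = o(x^{\min(n,k+1)})$, so $R = o(x^{\min(n+1,k+2)})$", which terminates at the fixed point $k = n+1$; I would present this as a short finite induction rather than grinding the estimates explicitly.
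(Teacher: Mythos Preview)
Your approach is correct and matches the paper's: induction on $n$, base case via $\gamma_1 R(0)=R'(0)=0$, and inductive step by applying the hypothesis to $R_1:=\gamma_2\gamma_1 R$ and then using the identity $R'(x)=2R_1(x)+\int_0^x q(s)R(s)\,ds$. The only difference is in the final recovery step: the paper avoids your multi-step bootstrap by observing that the level-$n$ assumptions on $R$ contain the level-$(n-1)$ ones, so the induction hypothesis applied to $R$ itself already gives $R=o(x^n)$; a single application of the mean-value theorem then yields $R(x)=x\cdot\bigl(\operatorname{Re}R'(c_1)+i\operatorname{Im}R'(c_2)\bigr)=x\cdot\bigl(o(x^n)+o(x^{n+1})\bigr)=o(x^{n+1})$ in one shot.
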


\begin{proof}
The proof is by induction. For $n=0$ by assumption we have that $R(0)=\gamma
_{1}R(0)=0$. Note that by definition of $\gamma_{1}$, $\gamma_{1}%
R(0)=-f^{\prime}(0)R(0)+R^{\prime}(0)$ (we took into account that $f(0)=1$),
and hence from the assumption we obtain that $R^{\prime}(0)=0$ which implies
that $R(x)=o(x)$.

Now, assuming that the statement is true for $n-1$, let us prove it for $n$.
For this, consider $R_{1}:=\gamma_{2}\gamma_{1}R$. Then by assumption we have
$\gamma_{1}R_{1}(0)=\ldots=\gamma_{1}\left(  \gamma_{2}\gamma_{1}\right)
^{n-1}R_{1}(0)=0$ as well as $R_{1}(0)=0$ due to the definition of $\gamma
_{2}$. Hence $R_{1}(x)=o(x^{n})$. Also we observe that
\begin{equation}
R_{1}(x)=\frac{1}{2}\int_{0}^{x}\left(  R^{\prime\prime}(s)-q(s)R(s)\right)
ds=\frac{1}{2}\left(  R^{\prime}(x)-\int_{0}^{x}q(s)R(s)ds\right)  ,
\label{R1}%
\end{equation}
where we took into account that $R^{\prime}(0)=0$.

Due to the mean-value theorem we have%
\[
R(x)=R(x)-R(0)=x\left(  \operatorname*{Re}R^{\prime}(c_{1}%
)+i\operatorname*{Im}R^{\prime}(c_{2})\right)  ,
\]
where $c_{1}$ and $c_{2}$ are some points between $0$ and $x$. Using
(\ref{R1}) we obtain
\begin{align*}
R(x)  &  =x\operatorname*{Re}\left(  2R_{1}(c_{1})+\int_{0}^{c_{1}%
}q(s)R(s)ds\right)  +ix\operatorname*{Im}\left(  2R_{1}(c_{2})+\int_{0}%
^{c_{2}}q(s)R(s)ds\right) \\
&  =x\left(  \operatorname*{Re}\left(  o(x^{n}\right)  +o(x^{n+1}%
))+i\operatorname{Im}\left(  o(x^{n}\right)  +o(x^{n+1}))\right)  =o(x^{n+1}).
\end{align*}
\end{proof}

\begin{theorem}[Taylor-type theorem with the Peano form of the remainder term] \label{Thm Taylor} Let the
complex valued function $F$ of the variable $x$ be such that in the
neighborhood of $x=0$ its generalized derivatives $\gamma_{1}\left(
\gamma_{2}\gamma_{1}\right)  ^{j}F$ exist for $j=0,\ldots N-1$ and are
continuous at $x=0$. Then
\begin{equation}
F(x)=\sum_{n=0}^{N}
\alpha_{n}\frac{\mathbf{c}_{n}(x)}{n!}+o(x^{N}) \label{Fc}
\end{equation}
where the coefficients $\alpha_{n}$ are defined as in Lemma \ref{Lemma R(0)}.

If additionally $F(0)=0$, then also
\begin{equation}
F(x)=\sum_{n=1}^{N}
\beta_{n}\frac{\mathbf{s}_{n}(x)}{n!}+o(x^{N}) \label{Fs}
\end{equation}
where the coefficients $\beta_{n}$ are defined as in Lemma
\ref{Lemma R(0) beta}.
\end{theorem}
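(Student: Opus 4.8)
The plan is to combine the three preceding lemmas directly. Given the function $F$ with the stated smoothness (existence and continuity at $x=0$ of $\gamma_1(\gamma_2\gamma_1)^j F$ for $j=0,\ldots,N-1$), first define the $\mathbf{c}$-polynomial $\mathbf{C}_N = \sum_{n=0}^{N}\alpha_n \mathbf{c}_n/n!$ with the coefficients $\alpha_n$ prescribed by Lemma \ref{Lemma R(0)}, namely $\alpha_0 = F(0)$ and the recursion \eqref{alpha j+1 F}. This is legitimate precisely because the required generalized derivatives of $F$ at the origin exist by hypothesis, and the functions $(\gamma_2\gamma_1)^{j-n}[1]$ appearing in the recursion are well defined (here one should note that the smoothness of $F$ is the only assumption needed; the terms $\gamma_1(\gamma_2\gamma_1)^{j-n}[1](0)$ are fixed numbers depending on $q$ and $f$ only).

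Next I would set $R := F - \mathbf{C}_N$ and invoke Lemma \ref{Lemma R(0)} to conclude that $R(0) = \gamma_1 R(0) = \cdots = \gamma_1(\gamma_2\gamma_1)^{N-1}R(0) = 0$. The only mild point to check is that $R$ inherits enough smoothness for these expressions to make sense: since $\mathbf{C}_N$ is a finite linear combination of the functions $\mathbf{c}_n$, which are built from $f$ and the formal powers and hence are as smooth as needed near $x=0$, the generalized derivatives $\gamma_1(\gamma_2\gamma_1)^j R$ exist and are continuous at $x=0$ for $j=0,\ldots,N-1$ exactly as for $F$. Then Lemma \ref{Lemma o(x)}, applied with $n = N-1$, gives $R(x) = o(x^N)$ as $x\to 0$, which is precisely \eqref{Fc}.

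For the second assertion \eqref{Fs}, the argument is the mirror image: assuming additionally $F(0)=0$, define $\mathbf{S}_N = \sum_{n=1}^{N}\beta_n \mathbf{s}_n/n!$ with the coefficients $\beta_n$ from Lemma \ref{Lemma R(0) beta} (recursion \eqref{beta j+1 F}), set $R := F - \mathbf{S}_N$, apply Lemma \ref{Lemma R(0) beta} to get the vanishing of $R(0),\gamma_1 R(0),\ldots,\gamma_1(\gamma_2\gamma_1)^{N-1}R(0)$, and then apply Lemma \ref{Lemma o(x)} once more to obtain $R(x) = o(x^N)$. The hypothesis $F(0)=0$ is exactly what is needed so that the $\mathbf{s}$-polynomial, which necessarily vanishes at the origin since $\mathbf{s}_0\equiv 0$, can match $F$ at $x=0$.

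I do not anticipate a serious obstacle: the theorem is essentially a bookkeeping corollary of Lemmas \ref{Lemma R(0)}, \ref{Lemma R(0) beta} and \ref{Lemma o(x)}, which have already done the real work. The one place that deserves a careful sentence rather than a one-line dismissal is the verification that $R = F - \mathbf{C}_N$ (respectively $F - \mathbf{S}_N$) satisfies the regularity hypothesis of Lemma \ref{Lemma o(x)}; this reduces to observing that the $\mathbf{c}_n$ and $\mathbf{s}_n$ are smooth enough near the origin (being products of $f$ with recursive integrals, all of which are at least $C^1$ and admit the relevant generalized derivatives by Proposition \ref{Prop Properties of c_n and s_n}), so that subtracting them does not destroy the existence or continuity at $x=0$ of any $\gamma_1(\gamma_2\gamma_1)^j$.
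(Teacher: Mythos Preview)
Your proposal is correct and follows essentially the same approach as the paper's own proof: define $R:=F-\mathbf{C}_N$ (respectively $F-\mathbf{S}_N$), invoke Lemma~\ref{Lemma R(0)} (respectively Lemma~\ref{Lemma R(0) beta}) to obtain the vanishing of $R$ and its generalized derivatives at $0$, and then apply Lemma~\ref{Lemma o(x)} with $n=N-1$ to conclude $R(x)=o(x^N)$. Your additional remarks on the regularity of $R$ are more explicit than what the paper writes, but the logical structure is identical.
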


\begin{proof}
For the proof of (\ref{Fc}) one needs to observe that due to Lemma
\ref{Lemma R(0)}, $R:=F-\sum_{n=0}^{N}
\alpha_{n}\frac{\mathbf{c}_{n}}{n!}$ satisfies (\ref{Rat0}) and hence
according to Lemma \ref{Lemma o(x)}, $R(x)=o(x^{N})$, $x\rightarrow0$.

Equality (\ref{Fs}) is proved similarly with the aid of Lemmas
\ref{Lemma R(0) beta} and \ref{Lemma o(x)}.
\end{proof}

\begin{remark}
Consider the solution $u$ of \eqref{Schr u} introduced in the proof of
Proposition \ref{Prop c_n s_n as formal powers} as satisfying
\eqref{cond for u}. Then the function $\cosh\lambda x\cdot u(x)$ is an example
of a function admitting a uniformly convergent on $[0,b]$ Taylor-type
expansion in terms of the functions $\mathbf{c}_{n}$. Indeed, we have
\[
\cosh\lambda x\cdot u(x)=\sum_{n=0}^{\infty}\lambda^{n}\frac{\mathbf{c}%
_{n}(x)}{n!}%
\]
and analogously,%
\[
\sinh\lambda x\cdot u(x)=\sum_{n=1}^{\infty}\lambda^{n}\frac{\mathbf{s}%
_{n}(x)}{n!}.
\]
The relations are obtained by adding and subtracting the functions $v$ and
$w$ from the proof of Proposition \ref{Prop c_n s_n as formal powers}.
\end{remark}

\begin{example}\label{Ex a2n b2n}
Lemmas \ref{Lemma R(0)} and \ref{Lemma R(0) beta} easily give us the exact expressions for several first coefficients $\alpha_n$ and $\beta_n$ of the generalized Taylor formulas for the functions $F_1(x) = \frac h2 + \frac 14\int_0^x q(s)\,ds$ and $F_2(x) = \frac 14\int_0^x q(s)\,ds$ appearing in \eqref{KxxErr}, \eqref{KxmxErr} and in the representation \eqref{K as a series}, see also \eqref{Goursat conditions Taylor}. First, observe that for a function $g$ such that $g(0)=0$ one has $\gamma_1 g(0) = g'(0)$. Second, we can compute several expressions $(2\gamma_2\gamma_1)^j [1]$ using \eqref{gamma2gamma1 C1}. We have
\begin{align*}
    (2\gamma_2\gamma_1)[1](x) &= -Q(x),\\
    (2\gamma_2\gamma_1)^2[1](x) &= -q(x) + q(0) - \frac 12 Q^2(x),\\
    (2\gamma_2\gamma_1)^3[1](x) &= -q'(x) + q'(0) - (q(x) + q(0))Q(x) + \int_0^x q^2(s)\,ds + \frac 16 Q^3(x),\\
    (2\gamma_2\gamma_1)^4[1](x) &= -q''(x) + q''(0) -q'(x)Q(x) - q(0)q(x) \\
    & \quad + q^2(0) + \frac 12 q(x)Q^2(x) - \int_0^x q(s) (2\gamma_2\gamma_1)^3[1](s)\,ds.
\end{align*}
Note that by the definition of the operator $\gamma_2\gamma_1$ all the expressions above are equal to zero at $x=0$. Hence,
\begin{gather*}
    \gamma_1[1](0)  = -h, \qquad   \gamma_1(2\gamma_2\gamma_1)[1](0)  = -q(0), \qquad \gamma_1(2\gamma_2\gamma_1)^2[1](0) = -q'(0), \\ \gamma_1(2\gamma_2\gamma_1)^3[1](0) = -q''(0), \qquad
    \gamma_1(2\gamma_2\gamma_1)^4[1](0)  = -q'''(0) - 2q'(0)q(0).
\end{gather*}
For Lemma \ref{Lemma R(0)}, the function $F_1(x) = \frac h2 - \frac 12 (\gamma_2\gamma_1)[1]$, hence we obtain
\begin{gather*}
    \alpha_0 = \frac h2, \qquad \alpha_1 = \frac{q(0)}4 - \frac{h^2}2,\qquad \alpha_2 = \frac{q'(0)}8 - \frac{h q(0)}4,\\
    \alpha_3 = \frac{q''(0)}{16} - \frac{hq'(0)}4 + \frac{h^2 q(0)}4,\qquad \alpha_4=\frac{q'''(0)}{32} - \frac{h\bigl(q''(0)-2q^2(0)\bigr)}{16}.
\end{gather*}
For Lemma \ref{Lemma R(0) beta}, the function $F_2(x) = - \frac 12 (\gamma_2\gamma_1)[1]$, hence we obtain
\begin{gather*}
    \beta_1 = \frac{q(0)}4,\qquad \beta_2=\frac{q'(0)}8 - \frac{h q(0)}4,\\
    \beta_3 = \frac{q''(0)}{16} - \frac{q^2(0)}8,\qquad \beta_4=\frac{q'''(0)}{32} - \frac{h\bigl(q''(0)-2q^2(0)\bigr)}{16}.
\end{gather*}
Observe that the coefficients with the even indices coincide. Later in Propositions \ref{Prop a2n b2n} and \ref{Prop a2n b2n 2} we give the proof of this phenomenon.
\end{example}

\section{The preimage of the transmutation kernel} 
\label{Sect 5}

Together with the function $\mathbf{K}$ let us consider its preimage
$\mathbf{k}:=\mathbf{T}_{f}^{-1}\left[  \mathbf{K}\right]  $. Due to Theorem
\ref{Th Transmutation} it is a solution of the wave equation and hence admits
the representation%
\begin{equation}
\mathbf{k}(x,\tau)=\varphi\left(  \frac{x+\tau}{2}\right)  +\psi\left(
\frac{x-\tau}{2}\right)  , \label{k via phi psi}%
\end{equation}
where the functions $\varphi$ and $\psi$ are unique up to an additive constant.

\begin{proposition}
The function $\mathbf{k}(x,\tau)$ admits the representation%
\begin{equation}\label{k(x,tau)}
\begin{split}
\mathbf{k}(x,\tau)  &  =\frac{h}{2}+\frac{1}{2}\int_{0}^{\frac{x+\tau}{2}}
q(s)ds-\int_{-\frac{x+\tau}{2}}^{\frac{x+\tau}{2}}
\mathbf{K}^{2}\left(t,\frac{x+\tau}{2}\right)dt\\
&  \quad-\int_{-\frac{x-\tau}{2}}^{\frac{x-\tau}{2}}
\mathbf{K}\left(t,\frac{x-\tau}{2}\right)\mathbf{K}\left(t,-\frac{x-\tau}{2}\right)\,dt.
\end{split}
\end{equation}
\end{proposition}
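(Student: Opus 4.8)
The plan is to bypass any direct manipulation of the double integrals and instead reduce \eqref{k(x,tau)} to a short check on the two characteristics $\tau=x$ and $\tau=-x$, exploiting that both sides are of d'Alembert type. First I would apply Proposition~\ref{Prop Inverse} in the first variable (treating the second one as a parameter, and using that $\mathbf{K}$ is defined and continuous on all of $\overline{\mathbf{S}}$), which yields the explicit formula
\begin{equation*}
\mathbf{k}(x,\tau)=\mathbf{K}(x,\tau)-\int_{-x}^{x}\mathbf{K}(t,x)\,\mathbf{K}(t,\tau)\,dt .
\end{equation*}
Next I would observe that the right-hand side of \eqref{k(x,tau)} is \emph{manifestly} of the form $\Phi\bigl(\tfrac{x+\tau}{2}\bigr)+\Psi\bigl(\tfrac{x-\tau}{2}\bigr)$: its first two terms depend only on $\tfrac{x+\tau}{2}$ (the first term being $\mathbf{H}\bigl(\tfrac{x+\tau}{2},0\bigr)$ by \eqref{GoursatTh2}), while the third term depends only on $\tfrac{x-\tau}{2}$. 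By \eqref{k via phi psi}, $\mathbf{k}$ has the same structure.

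The key point is that a function on $\overline{\mathbf{S}}$ of the form $\Phi\bigl(\tfrac{x+\tau}{2}\bigr)+\Psi\bigl(\tfrac{x-\tau}{2}\bigr)$ is completely determined by its restrictions to the two diagonals $\tau=x$ and $\tau=-x$: the first restriction pins down $\Phi$ up to an additive constant, the second pins down $\Psi$ up to an additive constant, and the two constants are forced to cancel. Hence it suffices to verify that $\mathbf{k}$ and the right-hand side of \eqref{k(x,tau)} coincide when $\tau=x$ and when $\tau=-x$.

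Both checks follow immediately from the explicit formula above. For $\tau=x$ it gives $\mathbf{k}(x,x)=\mathbf{K}(x,x)-\int_{-x}^{x}\mathbf{K}^{2}(t,x)\,dt$, which is exactly the right-hand side of \eqref{k(x,tau)} at $\tau=x$, since there the first term reduces to $\mathbf{H}(x,0)=\mathbf{K}(x,x)$ and the last integral degenerates to $\int_{0}^{0}=0$. For $\tau=-x$ it gives, via the Goursat condition $\mathbf{K}(x,-x)=\mathbf{H}(0,x)=h/2$ from \eqref{GoursatTh2} (equivalently \eqref{GoursatKh2}), the value $\mathbf{k}(x,-x)=\tfrac{h}{2}-\int_{-x}^{x}\mathbf{K}(t,x)\,\mathbf{K}(t,-x)\,dt$; and the right-hand side of \eqref{k(x,tau)} at $\tau=-x$ has first term $\tfrac{h}{2}$, vanishing first integral, and last term $-\int_{-x}^{x}\mathbf{K}(t,x)\,\mathbf{K}(t,-x)\,dt$, so the two agree. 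This establishes \eqref{k(x,tau)}.

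The only point requiring care is the validity of the d'Alembert representation \eqref{k via phi psi} for a merely continuous potential, but this is precisely what was recorded in the paragraph preceding the proposition (via Theorem~\ref{Th Transmutation}), and no extra smoothness of $q$ enters anywhere else. I do not expect a real obstacle here: the substantive step is the conceptual one of noticing the d'Alembert structure of the right-hand side of \eqref{k(x,tau)}, after which the characteristic-matching argument is routine and avoids the considerably more tedious alternative of transforming the double integrals through the integral equation \eqref{IntEq for K}.
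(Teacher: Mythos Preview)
Your proof is correct and follows essentially the same route as the paper: both start from the formula $\mathbf{k}(x,\tau)=\mathbf{K}(x,\tau)-\int_{-x}^{x}\mathbf{K}(t,x)\mathbf{K}(t,\tau)\,dt$ given by Proposition~\ref{Prop Inverse}, use the d'Alembert structure \eqref{k via phi psi}, and reduce the identity to its values on the two characteristics $\tau=\pm x$ via the Goursat conditions \eqref{GoursatKh2}. The only cosmetic difference is that the paper extracts $\varphi$ and $\psi$ explicitly and adds them (using $\varphi(0)+\psi(0)=h/2$), whereas you invoke the general principle that a d'Alembert function is determined by its restrictions to the two characteristics; this is the same argument in slightly more abstract packaging.
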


\begin{proof}
From Proposition \ref{Prop Inverse} we have
\begin{equation*}
\mathbf{k}(x,\tau)=\mathbf{K}(x,\tau)-\int_{-x}^{x}
\mathbf{K}(t,x)\mathbf{K}(t,\tau)dt. 
\end{equation*}
From this equality and (\ref{k via phi psi}) we obtain%
\begin{equation*}
\mathbf{k}(x,x)    =\varphi\left(  x\right)  +\psi(0)  =\mathbf{K}(x,x)-\int_{-x}^{x}
\mathbf{K}^{2}(t,x)dt  =\frac{h}{2}+\frac{1}{2}\int_{0}^{x}
q(s)ds-\int_{-x}^{x}
\mathbf{K}^{2}(t,x)dt
\end{equation*}
where (\ref{GoursatKh2}) was used.

Similarly,
\begin{equation}
\mathbf{k}(x,-x)   =\varphi\left(  0\right)  +\psi(x)
  =\mathbf{K}(x,-x)-\int_{-x}^{x}
\mathbf{K}(t,x)\mathbf{K}(t,-x)dt
 =\frac{h}{2}-\int_{-x}^{x}
\mathbf{K}(t,x)\mathbf{K}(t,-x)dt. \label{psi (x)}%
\end{equation}
Note that
\begin{equation}
\mathbf{k}(0,0)=\varphi\left(  0\right)  +\psi(0)=\frac{h}{2}. \label{k(0,0)}%
\end{equation}
Thus,
\[
\varphi\left(  \frac{x+\tau}{2}\right)  =\frac{h}{2}+\frac{1}{2}\int_{0}^{\frac{x+\tau}{2}}
q(s)ds-\int_{-\frac{x+\tau}{2}}^{\frac{x+\tau}{2}}
\mathbf{K}^{2}\left(t,\frac{x+\tau}{2}\right)dt-\psi(0)
\]
and
\[
\psi\left(  \frac{x-\tau}{2}\right)  =\frac{h}{2}-\int_{-\frac{x-\tau}{2}}^{\frac{x-\tau}{2}}
\mathbf{K}\left(t,\frac{x-\tau}{2}\right)\mathbf{K}\left(t,-\frac{x-\tau}{2}\right)dt-\varphi\left(
0\right)  .
\]
Adding these expressions and taking into account (\ref{k(0,0)}) we obtain
(\ref{k(x,tau)}).
\end{proof}

\begin{remark}
\label{Rem psi prime is even}The function $\psi^{\prime}$ is even. Indeed, due
to \eqref{psi (x)},
\begin{equation*}
\begin{split}
\psi(-x)    &=\frac{h}{2}-\varphi\left(  0\right)  -\int_{x}^{-x}
\mathbf{K}(t,-x)\mathbf{K}(t,x)dt
  \\& =\frac{h}{2}-\varphi\left(  0\right)  +\int_{-x}^{x}
\mathbf{K}(t,x)\mathbf{K}(t,-x)dt
  =-\psi(x)+h-2\varphi\left(  0\right)  .
\end{split}
\end{equation*}

From this fact and from \eqref{k via phi psi} it follows that
\[
\mathbf{k}_{1}(x,\tau)=\mathbf{k}_{1}(\tau,x)\qquad\text{and}\qquad
\mathbf{k}_{2}(x,\tau)=\mathbf{k}_{2}(\tau,x).
\]
\end{remark}

\begin{proposition}\label{Prop a2n b2n}
Suppose that the conditions from Remark \ref{Rem K as a series} are fulfilled
and hence $\mathbf{K}$ admits the representation \eqref{K as a series}. Then
\begin{equation}
a_{2n}=b_{2n}\qquad\text{for any }n\in\mathbb{N}.\label{a2n=b2n}%
\end{equation}
\end{proposition}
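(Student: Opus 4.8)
The plan is to transport the expansion \eqref{K as a series} through the inverse transmutation $\mathbf{T}_f^{-1}$, so as to obtain an expansion of the preimage $\mathbf{k}=\mathbf{T}_f^{-1}[\mathbf{K}]$ in \emph{wave} polynomials (rather than generalized wave polynomials), and then to read off the claimed identity from the parity statement for $\psi'$ already recorded in Remark \ref{Rem psi prime is even}. First I would use Remark \ref{Rem Tf pn}, which gives $u_n=\mathbf{T}_f[p_n]$, together with the continuity of $\mathbf{T}_f^{-1}$ on $C[-b,b]$ (Proposition \ref{Prop Inverse}). Since the series \eqref{K as a series} converges uniformly on $\overline{\mathbf{S}}$, applying $\mathbf{T}_f^{-1}$ term by term in the first variable yields, for every $(x,t)\in\overline{\mathbf{S}}$,
\[
\mathbf{k}(x,t)=a_0 p_0(x,t)+\sum_{n=1}^{\infty}a_n p_{2n-1}(x,t)+\sum_{n=1}^{\infty}b_n p_{2n}(x,t),
\]
with the partial sums converging uniformly on $\overline{\mathbf{S}}$. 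Substituting the closed forms of the wave polynomials from Example \ref{Ex Wave polynomials} and collecting the $(x+t)^n$ and $(x-t)^n$ terms gives
\[
\mathbf{k}(x,t)=a_0+\frac12\sum_{n=1}^{\infty}(a_n+b_n)(x+t)^n+\frac12\sum_{n=1}^{\infty}(a_n-b_n)(x-t)^n .
\]

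Next I would specialize to the diagonal $t=-x$, which belongs to $\overline{\mathbf{S}}$. There $x+t=0$, so only the second series survives and we get, for $|x|\le b$,
\[
\mathbf{k}(x,-x)=a_0+\frac12\sum_{n=1}^{\infty}(a_n-b_n)(2x)^n .
\]
Comparing this with \eqref{psi (x)}, which states $\mathbf{k}(x,-x)=\varphi(0)+\psi(x)$, we conclude that on $[-b,b]$ the function $\psi$ equals, up to the additive constant $a_0-\varphi(0)$, the convergent power series $\tfrac12\sum_{n\ge1}(a_n-b_n)(2x)^n$. Since this series converges at every point of $[-b,b]$ its radius of convergence is at least $b$, so on $(-b,b)$ it represents $\psi$ (modulo the constant) as a genuine power series and may be differentiated term by term:
\[
\psi'(x)=\sum_{n=1}^{\infty}(a_n-b_n)\,2^{n-1}n\,x^{n-1},\qquad |x|<b .
\]

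Finally I would invoke Remark \ref{Rem psi prime is even}, which asserts that $\psi'$ is even. Substituting $-x$ for $x$ in the last display and using uniqueness of the coefficients of a power series, we obtain $(a_n-b_n)2^{n-1}n=(-1)^{n-1}(a_n-b_n)2^{n-1}n$ for every $n\in\mathbb{N}$; for even $n$ this forces $(a_n-b_n)2^{n}n=0$, hence $a_n=b_n$. Writing $n=2m$ gives \eqref{a2n=b2n}. The only point demanding a word of care is the very first step: justifying the term-by-term application of $\mathbf{T}_f^{-1}$ to \eqref{K as a series}, and the uniform convergence of the resulting series for $\mathbf{k}$ on $\overline{\mathbf{S}}$, which in turn guarantees that the identity for $\mathbf{k}(x,-x)$ holds at each point. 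This, however, is immediate from the uniform convergence assumed in Remark \ref{Rem K as a series} and the boundedness of the fixed operator $\mathbf{T}_f^{-1}$ on $C[-b,b]$ (its operator norm being independent of the parameter $t$); the rest of the argument is elementary power-series manipulation.
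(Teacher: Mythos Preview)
Your argument is correct and follows essentially the same route as the paper's proof: both pass from \eqref{K as a series} to the wave-polynomial expansion of $\mathbf{k}$ via $\mathbf{T}_f^{-1}[u_n]=p_n$, extract the power series for $\psi$ (up to a constant) from the diagonal $t=-x$, and then use the parity information of Remark \ref{Rem psi prime is even} to force the even-index coefficients to vanish. The only cosmetic difference is that the paper compares $\psi(x)$ and $\psi(-x)$ directly using the relation $\psi(-x)=-\psi(x)+h-2\varphi(0)$ recorded in that remark, whereas you differentiate first and use that $\psi'$ is even; your extra care in justifying the termwise application of $\mathbf{T}_f^{-1}$ and the term-by-term differentiation is welcome and does not change the substance.
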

\begin{proof}
Indeed, in this case
\begin{align*}
\mathbf{k}(x,\tau)  & =a_{0}p_{0}(x,\tau)+\sum_{n=1}^{\infty}\left(
a_{n}p_{2n-1}(x,\tau)+b_{n}p_{2n}(x,\tau)\right)  \\
& =\frac{h}{2}+\frac{1}{2}\sum_{n=1}^{\infty}\left(  \left(  a_{n}%
+b_{n}\right)  (x+\tau)^{n}+\left(  a_{n}-b_{n}\right)  (x-\tau)^{n}\right)  .
\end{align*}
Thus,
\[
\varphi\left(  \frac{x+\tau}{2}\right)  =\varphi\left(  0\right)  +\frac{1}%
{2}\sum_{n=1}^{\infty}\left(  a_{n}+b_{n}\right)  (x+\tau)^{n}%
\]
and
\[
\psi\left(  \frac{x-\tau}{2}\right)  =\psi\left(  0\right)  +\frac{1}{2}%
\sum_{n=1}^{\infty}\left(  a_{n}-b_{n}\right)  (x-\tau)^{n}.
\]
From the last equality we have
\[
\psi\left(  x\right)  =\psi\left(  0\right)  +\frac{1}{2}\sum_{n=1}^{\infty
}2^{n}\left(  a_{n}-b_{n}\right)  x^{n}%
\]
and
\[
\psi\left(  -x\right)  =\psi\left(  0\right)  +\frac{1}{2}\sum_{n=1}^{\infty
}\left(  -1\right)  ^{n}2^{n}\left(  a_{n}-b_{n}\right)  x^{n}.
\]
Due to Remark \ref{Rem psi prime is even} we obtain the equality
\[
\sum_{n=1}^{\infty}\left(  -1\right)  ^{n}2^{n}\left(  a_{n}-b_{n}\right)
x^{n}=-\sum_{n=1}^{\infty}2^{n}\left(  a_{n}-b_{n}\right)  x^{n}%
\]
which proves the equality of the even coefficients \eqref{a2n=b2n}.
\end{proof}

\section{Goursat-to-Goursat transmutation operators and the generalized derivatives}\label{Sect 6}
By $\overline{\mathbf{S}}$ we denote a closed square with a diagonal joining
the endpoints $(b,b)$ and $(-b,-b)$ (c.f., Sect. \ref{Subsection Transmutation Operators}).  Let $\square :=\partial
_{x}^{2}-\partial _{t}^{2}$ and the functions $\widetilde{u}$ and $u$ be
solutions of the equations $\square \widetilde{u}=0$ and $\left( \square
-q(x)\right) u=0$ in $\overline{\mathbf{S}}$, respectively such that $u=T_{f}
\widetilde{u}$. In \cite{KT AnalyticApprox} the operator $G$ acting on the space $C^{1}[-b,b]\times C_{0}^{1}[-b,b]$ as
\begin{equation}\label{opG}
G:\ \frac{1}{2}\begin{pmatrix}
\widetilde{u}(x,x)+\widetilde{u}(x,-x)\\
\widetilde{u}(x,x)-\widetilde{u}(x,-x)
\end{pmatrix} \longmapsto \frac{1}{2}\begin{pmatrix}
u(x,x)+u(x,-x)\\
u(x,x)-u(x,-x)\end{pmatrix}.
\end{equation}
was introduced. It is  bounded together with its inverse and is related to the operator transforming the Goursat data for the equation $\square \widetilde{u}=0$ into the Goursat data for the equation $\left( \square-q(x)\right) u=0$. The operator $G$ allowed us to prove the completeness of the functions $\left\{ \mathbf{c}_{n}\right\} _{n=0}^{\infty }$ in $C^{1}[-b,b]$ and of the functions $\left\{ \mathbf{s}_{n}\right\} _{n=1}^{\infty }$ in $C_0^{1}[-b,b]$, see
\cite[Corollary 4.5]{KT AnalyticApprox}. The following proposition summarizes some properties of the operator $G$.
\begin{proposition}[\cite{KT AnalyticApprox}]\label{Prop Operator G}\mbox{}
\begin{enumerate}
\item[1)] The operator $G$ defined by \eqref{opG} on $C^{1}[-b,b]\times
C_{0}^{1}[-b,b]$ admits the following representation
\begin{equation*}
G\begin{pmatrix}\eta (x)\\
\xi (x)\end{pmatrix}=\begin{pmatrix}G_1[\eta (x)]\\
G_2[\xi (x)]\end{pmatrix}=\begin{pmatrix}G_{+}\left[ \eta (x)-\frac{\eta (0)
}{2}\right] +\frac{\eta (0)}{2}\\
G_{-}\left[ \xi (x)\right] \end{pmatrix}
\end{equation*}
where $G_{+}$ and $G_{-}$ have the form
\begin{equation*}
G_{\pm }\eta (x)=\eta (x)+\int_{-x}^{x}\mathbf{K}(x,t)\left( \eta \left(\frac{t+x}{2}\right)\pm
\eta \left(\frac{t-x}{2}\right)\right) dt.
\end{equation*}
\item[2)] Both operators $G_{+}$ and $G_{-}$ preserve the value of the function in
the origin and $G_{+}:C^{1}[-b,b]\rightarrow C^{1}[-b,b]$, $%
G_{-}:C_{0}^{1}[-b,b]\rightarrow C_{0}^{1}[-b,b]$.
\item[3)] There exist the inverse operators $G_{+}^{-1}$ and $G_{-}^{-1}$ as well as the inverse operators $G_{1}^{-1}$ and $G_{2}^{-1}$  defined
on $C^{1}[-b,b]$, and the inverse
operator for $G$ admits the representation
\begin{equation*}
G^{-1}\begin{pmatrix}\eta (x)\\
\xi (x)\end{pmatrix}=\begin{pmatrix}G_{+}^{-1}\left[ \eta (x)-\frac{\eta
(0)}{2}\right] +\frac{\eta (0)}{2}\\
G_{-}^{-1}\left[ \xi (x)\right] \end{pmatrix}.
\end{equation*}
\item[4)] The operator $G$ maps the powers of $x$ into the functions $\mathbf{c}_n$ and $\mathbf{s}_n$ according to the following relations
\begin{equation*}
        G:\ \begin{pmatrix}1\\0\end{pmatrix} \mapsto  \begin{pmatrix}\mathbf{c}_{0}(x)\\0\end{pmatrix},\quad
        G:\ 2^{n-1}\begin{pmatrix}x^{n}\\
0\end{pmatrix} \mapsto  \begin{pmatrix}\mathbf{c}_{n}(x)\\
0\end{pmatrix},\quad
G:\ 2^{n-1}\begin{pmatrix}0\\
x^{n}\end{pmatrix} \mapsto  \begin{pmatrix}0\\
\mathbf{s}_{n}(x)\end{pmatrix}.
\end{equation*}
\end{enumerate}
\end{proposition}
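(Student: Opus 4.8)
The plan is to prove the four parts of Proposition \ref{Prop Operator G} by working directly from the definition \eqref{opG} of $G$ together with the explicit Volterra form \eqref{Tf} of $\mathbf{T}_f$ and the mapping property \eqref{mapping powers 1}.

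First I would establish the representation in part 1). Starting from a solution $\widetilde u$ of $\square\widetilde u=0$, write $\widetilde u(x,t)=\eta_+((x+t)/2)+\eta_-((x-t)/2)$ for the general d'Alembert solution; then the Goursat data $\tfrac12(\widetilde u(x,x)+\widetilde u(x,-x))$ and $\tfrac12(\widetilde u(x,x)-\widetilde u(x,-x))$ are expressed through $\eta_\pm$. Since $u=\mathbf{T}_f\widetilde u$ is obtained by applying \eqref{Tf} to $\widetilde u$ in the $t$-variable, substituting $t=\pm x$ and using $\mathbf{K}(x,x)$, $\mathbf{K}(x,-x)$ from \eqref{GoursatKh2}, one recovers the stated integral formulas for $G_+$ and $G_-$. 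The decomposition of $G$ into the two scalar operators $G_1,G_2$ acting separately on $\eta$ and $\xi$ follows because the wave operator preserves the even/odd splitting in $t$ and $\mathbf{T}_f$ acts symmetrically via \eqref{Tf}; the shift by $\eta(0)/2$ appears because $G_+$ is naturally defined on functions vanishing at the origin while $\eta$ need not.

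For parts 2) and 3) I would argue as follows. That $G_+$ and $G_-$ preserve the value at the origin is immediate from the integral formula: at $x=0$ the integral $\int_{-x}^x$ is empty. The mapping $G_+:C^1[-b,b]\to C^1[-b,b]$ and $G_-:C_0^1[-b,b]\to C_0^1[-b,b]$ follows from the smoothness of $\mathbf{K}$ (continuously differentiable since $q\in C[-b,b]$, as recalled after Remark \ref{RemTh}) and differentiation under the integral sign, checking the boundary terms. Invertibility of $G_+,G_-$ — and hence of $G_1,G_2$ and $G$ — follows from the invertibility of $\mathbf{T}_f$ (Definition \ref{DefTransmut} and Theorem \ref{Th Transmutation}, with the explicit inverse in Proposition \ref{Prop Inverse}): the inverse $G_\pm^{-1}$ is constructed the same way from $\mathbf{T}_f^{-1}$, running the d'Alembert-to-Goursat correspondence in reverse. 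The block structure of $G^{-1}$ is then a direct computation mirroring part 1).

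Finally, part 4) is the cleanest: take $\widetilde u(x,t)=p_n(x,t)$, the wave polynomial of Example \ref{Ex Wave polynomials}. By Remark \ref{Rem Tf pn}, $u=\mathbf{T}_f[p_n]=u_n(x,t)$, the generalized wave polynomial. Evaluating the Goursat data of $p_n$ on $t=\pm x$ and using the explicit formulas $p_{2m-1}(x,x)=2^{m-1}(2x)^{\,?}$ — more precisely $p_{2m-1}(x,\pm x)=\tfrac12((2x)^m+0)$ and $p_{2m}(x,\pm x)=\pm\tfrac12(2x)^m$ — shows that the input Goursat data of $p_{2m-1}$ is $2^{m-1}(x^m,0)^{\mathsf T}$ and of $p_{2m}$ is $2^{m-1}(0,x^m)^{\mathsf T}$, while by the definitions \eqref{cm}, \eqref{sm} the output Goursat data $\tfrac12(u_n(x,x)\pm u_n(x,-x))$ equals $(\mathbf{c}_m,0)^{\mathsf T}$ or $(0,\mathbf{s}_m)^{\mathsf T}$ respectively; the case $n=0$ is handled separately since $p_0\equiv1$ maps to $\mathbf{c}_0=f$. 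I expect the main obstacle to be the bookkeeping in part 3): verifying carefully that differentiating the Volterra identities for $\mathbf{T}_f$ and $\mathbf{T}_f^{-1}$ really yields operators on $C^1$ with the claimed $C^1$ inverses, i.e. controlling the regularity of the kernels $\mathbf{K}(x,t)$ and $\mathbf{K}(t,x)$ and of the composed d'Alembert substitutions; everything else reduces to algebraic manipulation already licensed by the results quoted above.
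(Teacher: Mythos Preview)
This proposition is not proved in the present paper: it is quoted from \cite{KT AnalyticApprox} (note the citation in the proposition header), so there is no ``paper's own proof'' to compare against here. Your outline is essentially the natural argument and is the one used in \cite{KT AnalyticApprox}.

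One correction of wording: $\mathbf{T}_f$ acts in the $x$-variable, not the $t$-variable. Concretely, $u(x,\tau)=\widetilde u(x,\tau)+\int_{-x}^{x}\mathbf{K}(x,s)\,\widetilde u(s,\tau)\,ds$, and then you substitute $\tau=\pm x$. Writing $\widetilde u(x,t)=\phi\bigl(\tfrac{x+t}{2}\bigr)+\psi\bigl(\tfrac{x-t}{2}\bigr)$ and expressing $\phi,\psi$ through $\eta,\xi$ via the Goursat data, the computation of $\tfrac12\bigl(\widetilde u(s,x)\pm\widetilde u(s,-x)\bigr)$ gives exactly $\eta\bigl(\tfrac{s+x}{2}\bigr)+\eta\bigl(\tfrac{s-x}{2}\bigr)-\eta(0)$ and $\xi\bigl(\tfrac{s+x}{2}\bigr)-\xi\bigl(\tfrac{s-x}{2}\bigr)$, which yields the formulas for $G_1$ and $G_2$; the $\eta(0)$ term is what forces the shift by $\eta(0)/2$ in the $G_+$ representation. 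The Goursat values \eqref{GoursatKh2} of $\mathbf{K}$ are not actually needed for part 1). Your arguments for parts 2) and 4) are correct as stated. For part 3), your idea of running the construction backwards with $\mathbf{T}_f^{-1}$ from Proposition~\ref{Prop Inverse} is exactly right; the $C^1$ regularity of $\mathbf{K}(t,x)$ in the enlarged square $\overline{\mathbf{S}}$ (recalled just before Proposition~\ref{Prop Inverse}) is what makes the inverse land back in $C^1$, so the obstacle you anticipate does not materialize.
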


Together with the operators $G_+$ and $G_-$ consider the following pair of operators acting on $C^1[-b,b]$
\begin{equation*}
    \Gamma_-\eta(x)=\eta(0) + \int_{-x}^x \mathbf{K}(x,t)\eta\left(\frac{t-x}2\right)\,dt
\end{equation*}
and
\begin{equation*}
    \Gamma_+\eta(x)=\eta(x) + \int_{-x}^x \mathbf{K}(x,t)\eta\left(\frac{t+x}2\right)\,dt.
\end{equation*}

\begin{proposition}\label{Prop Gammapm xn}
The images of the powers of $x$ under the action of the operators $\Gamma_{\pm}$ are given by the following relations
\begin{equation}\label{Gamma_pm xn}
    \Gamma_-[x^n](x) = \frac 1{2^n}\bigl(\mathbf{c}_n(x)-\mathbf{s}_n(x)\bigr),\qquad
    \Gamma_+[x^n](x) = \frac 1{2^n}\bigl(\mathbf{c}_n(x)+\mathbf{s}_n(x)\bigr),\qquad n\ge 0.
\end{equation}
\end{proposition}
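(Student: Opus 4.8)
The plan is to express $\Gamma_{\pm}$ in terms of the operators $G_{+}$ and $G_{-}$ from Proposition~\ref{Prop Operator G} and then invoke items~1) and~4) of that proposition. First I would simply add and subtract the defining integral representations of $G_{+}$ and $G_{-}$. Since
\[
G_{+}\eta(x)+G_{-}\eta(x)=2\eta(x)+2\int_{-x}^{x}\mathbf{K}(x,t)\eta\left(\frac{t+x}{2}\right)dt=2\Gamma_{+}\eta(x)
\]
and
\[
G_{+}\eta(x)-G_{-}\eta(x)=2\int_{-x}^{x}\mathbf{K}(x,t)\eta\left(\frac{t-x}{2}\right)dt=2\bigl(\Gamma_{-}\eta(x)-\eta(0)\bigr),
\]
one obtains the operator identities $\Gamma_{+}\eta=\tfrac12\bigl(G_{+}\eta+G_{-}\eta\bigr)$ and $\Gamma_{-}\eta=\eta(0)+\tfrac12\bigl(G_{+}\eta-G_{-}\eta\bigr)$, valid on $C^{1}[-b,b]$.

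Next I would read off the action of $G_{\pm}$ on the monomials. For $n\ge1$ the function $x^{n}$ vanishes at the origin, so the decomposition in item~1) of Proposition~\ref{Prop Operator G} gives $G_{1}[x^{n}]=G_{+}[x^{n}]$ and $G_{2}[x^{n}]=G_{-}[x^{n}]$; combining this with item~4) yields $G_{+}[x^{n}]=2^{1-n}\mathbf{c}_{n}$ and $G_{-}[x^{n}]=2^{1-n}\mathbf{s}_{n}$ for $n\ge1$. Substituting these into the two identities above and using $\eta(0)=0$ for $\eta(x)=x^{n}$, $n\ge1$, gives precisely \eqref{Gamma_pm xn} in that range.

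Finally I would treat $n=0$ directly. Here $\Gamma_{+}[1](x)=\Gamma_{-}[1](x)=1+\int_{-x}^{x}\mathbf{K}(x,t)\,dt=\mathbf{T}_{f}[1](x)=f(x)=\mathbf{c}_{0}(x)$, and since $\mathbf{s}_{0}\equiv0$ both identities in \eqref{Gamma_pm xn} hold for $n=0$ as well.

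I do not expect a serious obstacle: the argument is essentially bookkeeping, combining Proposition~\ref{Prop Operator G} with the elementary algebraic relations between $\Gamma_{\pm}$ and $G_{\pm}$. The only point requiring a little care is the $n=0$ case, where one must invoke the normalization $\mathbf{T}_{f}[1]=f$ and the convention $\mathbf{s}_{0}\equiv0$ rather than the formula $G_{+}[x^{n}]=2^{1-n}\mathbf{c}_{n}$, which was derived under the assumption that the argument vanishes at the origin.
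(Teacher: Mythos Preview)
Your proof is correct but follows a different route from the paper's. The paper argues directly with the transmutation $\mathbf{T}_f$: for $n\ge 1$ it rewrites $\bigl(\tfrac{t-x}{2}\bigr)^n=\tfrac{1}{2^n}\bigl(p_{2n-1}(t,x)-p_{2n}(t,x)\bigr)$, observes that $p_{2n-1}(x,x)-p_{2n}(x,x)=0$, and then recognizes the resulting expression as $\tfrac{1}{2^n}\bigl(\mathbf{T}_f[p_{2n-1}(\cdot,x)](x)-\mathbf{T}_f[p_{2n}(\cdot,x)](x)\bigr)=\tfrac{1}{2^n}\bigl(u_{2n-1}(x,x)-u_{2n}(x,x)\bigr)$ via Remark~\ref{Rem Tf pn}; the case $n=0$ is handled the same way you do. You instead reduce the statement to Proposition~\ref{Prop Operator G} by the operator identities $\Gamma_{+}=\tfrac12(G_{+}+G_{-})$ and $\Gamma_{-}=\delta+\tfrac12(G_{+}-G_{-})$, which is clean and entirely legitimate. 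Your argument has the advantage of making the relation to $G_{\pm}$ explicit (this is in the spirit of \eqref{G1G2Gammas} a few lines later), while the paper's computation is more self-contained in that it appeals only to the basic mapping property $\mathbf{T}_f[p_n]=u_n$ rather than to the package of results imported in Proposition~\ref{Prop Operator G}.
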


\begin{proof}
We present the proof for the first relation in \eqref{Gamma_pm xn}, the proof for the second relation is similar.

If $n=0$, then
\begin{equation*}
    \Gamma_-[1](x)=1+\int_{-x}^x \mathbf{K}(x,t) \cdot 1\,dt=\mathbf{T}_f[1](x)=\mathbf{c}_0(x) = \mathbf{c}_0(x) - \mathbf{s}_0(x).
\end{equation*}

If $n>0$, then using Remark \ref{Rem Tf pn} and formulas from Example \ref{Ex Wave polynomials} we obtain
\begin{equation*}
    \begin{split}
       \Gamma_-[x^n](x) & = \int_{-x}^x \mathbf{K}(x,t) \left(\frac{t-x}2\right)^n\,dt =\frac 1{2^n}\int_{-x}^x \mathbf{K}(x,t) \bigl(p_{2n-1}(t,x) - p_{2n}(t,x)\bigr)\,dt\\
         & = \frac 1{2^n}\bigl(p_{2n-1}(x,x) - p_{2n}(x,x)\bigr) + \frac 1{2^n}\int_{-x}^x \mathbf{K}(x,t) \bigl(p_{2n-1}(t,x) - p_{2n}(t,x)\bigr)\,dt\\
         & = \frac 1{2^n}\bigl(u_{2n-1}(x,x) - u_{2n}(x,x)\bigr)=\frac 1{2^n}\bigl(\mathbf{c}_{n}(x) - \mathbf{s}_{n}(x)\bigr).
     \end{split}
\end{equation*}
\end{proof}

\begin{proposition}
The following commutation relations hold for the operators $\Gamma_-$ and $\Gamma_+$ on $C^2[-b,b]$,
\begin{equation}\label{GammasCommutation}
    \bigl(\partial^2-q(x)\bigr)\Gamma_+ = \partial\Gamma_+\partial\qquad\text{and}\qquad \bigl(\partial^2-q(x)\bigr)\Gamma_- = -\partial\Gamma_-\partial,
\end{equation}
moreover, on $C^1[-b,b]$ the operators $\Gamma_{\pm}$ satisfy
\begin{equation}\label{GammasCommutation2}
    2\gamma_2\gamma_1\Gamma_+ = \bigl(\Gamma_+-\delta)\partial\qquad\text{and}\qquad 2\gamma_2\gamma_1\Gamma_- = -(\Gamma_--\delta)\partial,
\end{equation}
where $\partial$ denotes $d/dx$ and $\delta [\eta](x) = \eta(0)$.
\end{proposition}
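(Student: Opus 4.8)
The plan is to realise $\Gamma_{+}\eta$ and $\Gamma_{-}\eta$ as the two characteristic traces of a single solution of the equation with potential, so that \eqref{GammasCommutation} becomes a chain-rule identity, and then to obtain \eqref{GammasCommutation2} from \eqref{GammasCommutation} by integration. Concretely, given $\eta\in C^{2}[-b,b]$ I would set $\widetilde u(x,t):=\eta\!\left(\frac{x+t}{2}\right)$, which satisfies $\square\widetilde u=0$ because it depends on $x+t$ only, and let
\[
u(x,t):=\widetilde u(x,t)+\int_{-x}^{x}\mathbf{K}(x,s)\,\widetilde u(s,t)\,ds ,
\]
i.e. the transmutation $\mathbf{T}_{f}$ applied in the first variable. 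Differentiating under the integral in $t$ and using \eqref{transmutationT} gives $(\partial_{x}^{2}-q(x))u=\partial_{t}^{2}u$, that is $u_{11}-u_{22}=q(x)u$, where subscripts denote partial derivatives. Comparing with the definitions one reads off $\Gamma_{+}\eta(x)=u(x,x)$ and $\Gamma_{-}\eta(x)=u(x,-x)$, and, differentiating the integrand once more in $t$ before restricting $t=\pm x$, $\Gamma_{+}[\eta'](x)=2u_{2}(x,x)$ and $\Gamma_{-}[\eta'](x)=2u_{2}(x,-x)$.

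With this in hand, \eqref{GammasCommutation} is pure bookkeeping. Along $t=x$ one has $\frac{d^{2}}{dx^{2}}u(x,x)=u_{11}+2u_{12}+u_{22}=2u_{22}+2u_{12}+q(x)u$ (all evaluated at $(x,x)$), so $(\partial^{2}-q)\Gamma_{+}\eta=2u_{22}(x,x)+2u_{12}(x,x)$, while $\partial\Gamma_{+}\partial\eta=\frac{d}{dx}\bigl(2u_{2}(x,x)\bigr)=2u_{21}(x,x)+2u_{22}(x,x)$, and the two coincide since $u_{12}=u_{21}$. The same computation along $t=-x$ produces $\frac{d^{2}}{dx^{2}}u(x,-x)=u_{11}-2u_{12}+u_{22}$, hence $(\partial^{2}-q)\Gamma_{-}\eta=2u_{22}(x,-x)-2u_{12}(x,-x)$, whereas $\partial\Gamma_{-}\partial\eta=2u_{21}(x,-x)-2u_{22}(x,-x)$ is the negative of it; this is \eqref{GammasCommutation}. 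To get \eqref{GammasCommutation2} I would then integrate \eqref{GammasCommutation} from $0$ to $x$: by Proposition \ref{Prop gamma2gamma1}, $2\gamma_{2}\gamma_{1}g(x)=\int_{0}^{x}\bigl(g''(s)-q(s)g(s)\bigr)\,ds$ for $g\in C^{2}$, so applying $\int_{0}^{x}(\cdot)\,ds$ to $(\partial^{2}-q)\Gamma_{+}=\partial\Gamma_{+}\partial$ gives
\[
2\gamma_{2}\gamma_{1}\Gamma_{+}\eta(x)=\Gamma_{+}[\eta'](x)-\Gamma_{+}[\eta'](0)=(\Gamma_{+}-\delta)\partial\eta(x),
\]
since $\Gamma_{+}[\eta'](0)=\eta'(0)=\delta[\partial\eta]$; the minus sign in \eqref{GammasCommutation} yields $2\gamma_{2}\gamma_{1}\Gamma_{-}=-(\Gamma_{-}-\delta)\partial$. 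This proves \eqref{GammasCommutation2} for $\eta\in C^{2}$, and I would pass to $\eta\in C^{1}[-b,b]$ by density, since both sides are continuous in the $C^{1}$-norm with values in $C[-b,b]$: $\partial$ and $\gamma_{2}\gamma_{1}$ (via \eqref{gamma2gamma1 C1}) are bounded $C^{1}\to C$, and $\Gamma_{\pm}$ are bounded on $C$ by their integral representations.

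An alternative route, which avoids any extra smoothness hypothesis on $q$, is to verify all four relations on the monomials $x^{n}$ and extend by linearity and density. Here I would use $\Gamma_{\pm}[x^{n}]=2^{-n}(\mathbf{c}_{n}\pm\mathbf{s}_{n})$ from Proposition \ref{Prop Gammapm xn}, together with the fact, obtained by comparing powers of $\lambda$ in the series $e^{\pm\lambda x}u=\sum_{k}\lambda^{k}(\mathbf{c}_{k}\pm\mathbf{s}_{k})/k!$ from the proof of Proposition \ref{Prop c_n s_n as formal powers}, which solve $y''-qy=\pm2\lambda y'$, that $(\partial^{2}-q)(\mathbf{c}_{n}\pm\mathbf{s}_{n})=\pm2n(\mathbf{c}_{n-1}\pm\mathbf{s}_{n-1})'$; combining these gives $(\partial^{2}-q)\Gamma_{\pm}[x^{n}]=\pm\partial\Gamma_{\pm}[nx^{n-1}]$ (both sides vanish for $n=0$ since $f''=qf$). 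For \eqref{GammasCommutation2} one uses the same formula for $\Gamma_{\pm}[x^{n}]$ with \eqref{gamma2 gamma1 c_n}, \eqref{gamma2 gamma1 s_n} and \eqref{gamma2 gamma1 c_0}, treating $n\ge2$, $n=1$ (the term $f-1$ matching $(\Gamma_{\pm}-\delta)[1]=\pm(f-1)$) and $n=0$ separately.

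The main obstacle I anticipate is not conceptual but the sign and index bookkeeping: keeping straight which mixed partial derivatives and which $\pm$ survive along each of the characteristics $t=\pm x$. A minor technical point is that the composition $(\partial^{2}-q)\Gamma_{\pm}$ is literally defined on $C^{2}[-b,b]$ only when $\mathbf{K}$ is twice differentiable, i.e. when $q\in C^{1}$; the general continuous case is handled either by approximating $q$ uniformly by $C^{1}$ potentials and passing to the limit in $\mathbf{K}$, or directly by the monomial argument above.
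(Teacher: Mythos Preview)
Your proposal is correct. Your alternative route---verifying \eqref{GammasCommutation2} on the monomials $x^{n}$ via Proposition~\ref{Prop Gammapm xn} and Proposition~\ref{Prop Properties of c_n and s_n}, then extending by density in $C^{1}$---is exactly what the paper does; the paper simply records \eqref{GammasCommutation} as a consequence of the equivalence on $C^{2}$ given by Proposition~\ref{Prop gamma2gamma1}, mentioning that a direct computation in the style of \cite[Theorem~6]{KrT2012} is possible but omitted.

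Your main route is a genuinely different and more geometric argument: you realise $\Gamma_{\pm}\eta$ as the traces $u(x,\pm x)$ of the single function $u=\mathbf{T}_{f}[\eta((x+t)/2)]$, so that the transmutation identity $(\partial_{x}^{2}-q)u=\partial_{t}^{2}u$ together with the chain rule along the characteristics $t=\pm x$ yields \eqref{GammasCommutation} in two lines, and \eqref{GammasCommutation2} then follows by a single antidifferentiation. This has the advantage of explaining \emph{why} the commutation relations hold (they are the PDE restricted to characteristics) rather than checking them formula by formula, and it makes the sign in the $\Gamma_{-}$ relation transparent. The paper's monomial approach, on the other hand, works directly for $q\in C[-b,b]$ without the detour through $q\in C^{1}$ that your trace argument needs in order to have $u\in C^{2}$ and $u_{12}=u_{21}$; you correctly flag this and your suggested fixes (approximating $q$, or falling back on the monomial argument) are adequate. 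The density step at the end---that both sides of \eqref{GammasCommutation2} are bounded $C^{1}\to C$ using \eqref{gamma2gamma1 C1} and the $C^{1}$-regularity of $\mathbf{K}$---matches the paper's closure argument in spirit, though the paper phrases it via closedness of $\partial$ after rewriting \eqref{GammasCommutation2} as an identity for $\frac{d}{dx}\Gamma_{\pm}[p_{n}]$.
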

\begin{proof}
Note that the relations \eqref{GammasCommutation} and \eqref{GammasCommutation2} are equivalent on $C^2[-b,b]$ due to Proposition \ref{Prop gamma2gamma1}.

Relations \eqref{GammasCommutation} can be verified directly similarly to \cite[Theorem 6]{KrT2012}. We omit this lengthy verification and exploit Proposition \ref{Prop Gammapm xn} to verify \eqref{GammasCommutation2}.

Let $n>1$. Then using Propositions \ref{Prop Properties of c_n and s_n} and \ref{Prop Gammapm xn}
\[
2\gamma_2\gamma_1\Gamma_{\pm}[x^n]=2\gamma_2\gamma_1\left[\frac 1{2^n}\bigl(\mathbf{c}_n\pm\mathbf{s}_n\bigr)\right]=\frac n{2^{n-1}}\bigl(\mathbf{s}_n\pm\mathbf{c}_n\bigr)=\pm \Gamma_{\pm}[nx^{n-1}]=\pm \bigl(\Gamma_{\pm}-\delta\bigr)[\partial x^n],
\]
i.e., \eqref{GammasCommutation2} holds for the functions $x^n$ for any $n>1$. The validity of \eqref{GammasCommutation2} for the cases $n=0,1$ can be easily verified as well using Proposition \ref{Prop Properties of c_n and s_n}. By linearity \eqref{GammasCommutation2} holds for any polynomial.

Consider a function $\eta(x)\in C^1[-b,b]$ and let $p_n$ be a sequence of polynomials such that $p_n(0)=\eta(0)$, $p'_n(0)=\eta'(0)$ and $p'_n(x)\to \eta'(x)$ uniformly as $n\to\infty$. Then necessarily $p_n(x)\to \eta(x)$ uniformly for $x\in[-b,b]$. Using Proposition \ref{Prop gamma2gamma1} the relations \eqref{GammasCommutation2} for the polynomials $p_n$ can be written as
\[
\frac{d}{dx}\bigl(\Gamma_{+}[p_n(x)]\bigr) = p_n'(0) + hp_n(0) + \int_{0}^x q(s)\Gamma_{+}[p_n(s)]\,ds + \Gamma_+[p'_n(x)]-p'_n(0)
\]
and
\[
\frac{d}{dx}\bigl(\Gamma_{-}[p_n(x)]\bigr) = hp_n(0) + \int_{0}^x q(s)\Gamma_{-}[p_n(s)]\,ds - \Gamma_-[p'_n(x)]+p'_n(0)
\]
The right-hand sides of these expressions converge uniformly as $n\to\infty$. The differentiation operators in the left-hand sides are closed operators (defined on suitable subspaces of $C^1[-b,b]$) hence allowing us to pass to the limit as $n\to\infty$ and verify \eqref{GammasCommutation2} for the function $\eta$.
\end{proof}

The operators $G_1$ and $G_2$ have the following representations in terms of the operators $\Gamma_1$ and $\Gamma_2$,
\begin{equation}\label{G1G2Gammas}
    G_1=\Gamma_++\Gamma_--f\cdot \delta,\qquad \text{and}\qquad G_2=\Gamma_+-\Gamma_- +\delta.
\end{equation}
Applying the generalized derivatives $\gamma_2\gamma_1$ to \eqref{G1G2Gammas} and using \eqref{GammasCommutation2} we obtain the following corollary.
\begin{corollary}\label{Cor gamma2gamma1 commut}
The following commutation relations hold for the operators $G_1$ and $G_2$ on $C^1[-b,b]$,
\begin{equation}\label{G1G2commut}
    2\gamma_2\gamma_1 G_1 = \bigl(G_2-\delta\bigr)\partial\qquad \text{and}\qquad
    2\gamma_2\gamma_1 \bigl(G_2-\delta\bigr) = \bigl(G_1+(f-2)\delta\bigr)\partial.
\end{equation}
\end{corollary}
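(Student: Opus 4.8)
The plan is to simply substitute the representations \eqref{G1G2Gammas} into the commutation relations \eqref{GammasCommutation2} and collect terms, keeping careful track of the extra $\delta$-terms. First I would apply $2\gamma_2\gamma_1$ to the identity $G_1=\Gamma_++\Gamma_--f\cdot\delta$. Since $\gamma_1 f\equiv 0$, we have $\gamma_2\gamma_1(f\cdot\delta)[\eta](x)=\gamma_2\gamma_1[\eta(0)f](x)=\eta(0)\,\gamma_2\gamma_1 f(x)=0$, so the last term drops out entirely. Using \eqref{GammasCommutation2}, namely $2\gamma_2\gamma_1\Gamma_+=(\Gamma_+-\delta)\partial$ and $2\gamma_2\gamma_1\Gamma_-=-(\Gamma_--\delta)\partial$, the sum becomes
\[
2\gamma_2\gamma_1 G_1=(\Gamma_+-\delta)\partial-(\Gamma_--\delta)\partial=(\Gamma_+-\Gamma_-)\partial.
\]
Comparing with $G_2=\Gamma_+-\Gamma_-+\delta$ gives $\Gamma_+-\Gamma_-=G_2-\delta$, hence $2\gamma_2\gamma_1 G_1=(G_2-\delta)\partial$, which is the first relation in \eqref{G1G2commut}.

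For the second relation I would apply $2\gamma_2\gamma_1$ to $G_2-\delta=\Gamma_+-\Gamma_-$ (again using that $\gamma_2\gamma_1$ annihilates $\delta[\eta]=\eta(0)$ viewed as a constant function, since $\gamma_2\gamma_1[\text{const}]$ involves the derivative of a constant). By \eqref{GammasCommutation2},
\[
2\gamma_2\gamma_1(G_2-\delta)=(\Gamma_+-\delta)\partial+(\Gamma_--\delta)\partial=(\Gamma_++\Gamma_--2\delta)\partial.
\]
Now from the first identity in \eqref{G1G2Gammas}, $\Gamma_++\Gamma_-=G_1+f\cdot\delta$, so $\Gamma_++\Gamma_--2\delta=G_1+(f-2)\delta$, which yields exactly $2\gamma_2\gamma_1(G_2-\delta)=(G_1+(f-2)\delta)\partial$, the second relation in \eqref{G1G2commut}.

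There is essentially no obstacle here: the result is a formal consequence of the already-established operator identities \eqref{GammasCommutation2} and \eqref{G1G2Gammas}. The only points requiring a word of care are (i) verifying that $\gamma_2\gamma_1$ kills the terms $f\cdot\delta$ and $\delta$ — which follows from $\gamma_1 f\equiv 0$ and from the fact that $\gamma_2\gamma_1$ applied to any constant function vanishes (indeed $\gamma_2\gamma_1 g$ depends only on $g'$ via \eqref{gamma2gamma1 C1} up to the $\int q g$ term, and for $g\equiv\eta(0)$ one checks directly it is $-\tfrac{\eta(0)}{2}Q$, but here $\delta[\eta]$ is being composed as an operator, so what matters is that $(2\gamma_2\gamma_1\delta)[\eta]=\eta(0)(2\gamma_2\gamma_1)[1]$ and this term is handled consistently on both sides); and (ii) noting that all identities are being asserted on $C^1[-b,b]$, which is legitimate because \eqref{GammasCommutation2} was proved on $C^1[-b,b]$ and the representations \eqref{G1G2Gammas} hold there as well. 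A clean way to present this is to first rewrite $\delta$-terms explicitly and then just read off the two identities; I would keep the computation to the few displayed lines above rather than expand $\Gamma_\pm$ into their integral forms.
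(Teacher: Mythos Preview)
Your proposal is correct and follows exactly the approach indicated in the paper, which simply says to apply $\gamma_2\gamma_1$ to the representations \eqref{G1G2Gammas} and use \eqref{GammasCommutation2}. One small presentational point: your aside in item (i) about $\gamma_2\gamma_1$ ``killing'' $\delta$ is momentarily misleading (as you yourself note, $\gamma_2\gamma_1[1]=-\tfrac12 Q\neq 0$ in general), but since you actually start the second computation from $G_2-\delta=\Gamma_+-\Gamma_-$ you never need this claim, so just drop that remark and keep the clean two-line computation.
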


Repeated application of Corollary \ref{Cor gamma2gamma1 commut} allows us to obtain the following result.
\begin{corollary}\label{Cor G1G2 uCn}
Suppose that $q\in C^p[-b,b]$ and let a function $u\in C^m[-b,b]$ be given. Then there exist all the functions $(\gamma_2\gamma_1)^j [G_1u]$ for $j\le \min\{m, p+4\}$ and all the functions $(\gamma_2\gamma_1)^j\linebreak[2] [G_2u-u(0)]$ for $j\le\min\{m, p+3\}$ and they are given by the following expressions.

If $n=2k$ then
\begin{align}
    (2\gamma_2\gamma_1)^{n} G_1 u &=
    G_1 u^{(n)} + f\cdot  u^{(n)}(0)- 2\sum_{\text{even } j=2}^{n}u^{(j)}(0)\cdot(2\gamma_2\gamma_1)^{n-j}[1],\label{CommuteG1 2k}\\
    (2\gamma_2\gamma_1)^n [G_2 u - u(0)] &= G_2 u^{(n)} - u^{(n)}(0) - 2\sum_{\text{odd }j=1}^{n-1}u^{(j)}(0)\cdot(2\gamma_2\gamma_1)^{n-j}[1].\label{CommuteG2 2k}
\end{align}

If $n=2k+1$ then
\begin{align}
    (2\gamma_2\gamma_1)^n G_1 u &=
    G_2u^{(n)} - u^{(n)}(0) - 2\sum_{\text{even }j=2}^{n-1}u^{(j)}(0)(2\gamma_2\gamma_1)^{n-j}[1],\label{CommuteG1 2k+1}\\
    (2\gamma_2\gamma_1)^n [G_2 u - u(0)] &=
    G_1u^{(n)}+ f \cdot u^{(n)}(0) - 2\sum_{\text{odd }j=1}^{n}u^{(j)}(0)(2\gamma_2\gamma_1)^{n-j}[1].\label{CommuteG2 2k+1}
\end{align}
\end{corollary}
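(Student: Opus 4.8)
The proof is a straightforward induction on $n$ driven by the two one-step identities of Corollary \ref{Cor gamma2gamma1 commut}. It is convenient to abbreviate $\ell:=2\gamma_2\gamma_1$, $\partial:=d/dx$, and $B:=G_2-\delta$, so that \eqref{G1G2commut} becomes the compact pair
\[
\ell G_1=B\partial,\qquad \ell B=\bigl(G_1+(f-2)\delta\bigr)\partial .
\]
Before the induction I would record the elementary facts that will be used at every step: $\ell[f]=2\gamma_2\gamma_1 f=0$ because $\gamma_1 f\equiv0$; for a constant $c$, $\ell\bigl[c\,(2\gamma_2\gamma_1)^m[1]\bigr]=c\,(2\gamma_2\gamma_1)^{m+1}[1]$ by linearity; $(f-2)c=f c-2c\,(2\gamma_2\gamma_1)^0[1]$, where $(2\gamma_2\gamma_1)^0[1]:=1$; and $\delta\partial[v]=v'(0)$, i.e. the constant function with value $v'(0)$.

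For $G_1 u$ the base case $n=1$ is precisely $\ell G_1 u=B u'=G_2 u'-u'(0)$, which is \eqref{CommuteG1 2k+1} with $k=0$. For the inductive step one applies $\ell$ to the stage-$n$ formula and distributes over its three kinds of summands. On the leading term use $\ell G_1=B\partial$ when $n$ is even (obtaining $B u^{(n+1)}=G_2 u^{(n+1)}-u^{(n+1)}(0)$) and $\ell B=\bigl(G_1+(f-2)\delta\bigr)\partial$ when $n$ is odd (obtaining $G_1 u^{(n+1)}+(f-2)u^{(n+1)}(0)$, which is then rewritten as $G_1 u^{(n+1)}+f u^{(n+1)}(0)-2u^{(n+1)}(0)(2\gamma_2\gamma_1)^0[1]$). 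The term $f u^{(n)}(0)$, present only when $n$ is even, is killed by $\ell$. Each correction term $-2u^{(j)}(0)(2\gamma_2\gamma_1)^{n-j}[1]$ turns into $-2u^{(j)}(0)(2\gamma_2\gamma_1)^{n+1-j}[1]$. Collecting the pieces: for even $n\to$ odd $n+1$ one reads off \eqref{CommuteG1 2k+1} for $n+1$ directly, and for odd $n\to$ even $n+1$ the freshly created term $-2u^{(n+1)}(0)(2\gamma_2\gamma_1)^0[1]$ is exactly the $j=n+1$ summand missing from the shifted sum, so the corrections close up into $-2\sum_{\text{even }j=2}^{n+1}u^{(j)}(0)(2\gamma_2\gamma_1)^{(n+1)-j}[1]$ and one recovers \eqref{CommuteG1 2k} for $n+1$. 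The induction for $B u=G_2 u-u(0)$ is the mirror image, with ``even $j$'' and ``odd $j$'' in the sums interchanged; its base case $n=1$ is $\ell B u=G_1 u'+(f-2)u'(0)=G_1 u'+f u'(0)-2u'(0)$, which is \eqref{CommuteG2 2k+1} with $k=0$.

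The existence and regularity assertions are pure bookkeeping once the formulas are available. On the right-hand sides of \eqref{CommuteG1 2k}--\eqref{CommuteG1 2k+1} only the derivatives $u^{(j)}$ with $j\le n$ appear, so $u\in C^m$ suffices as long as $n\le m$ (the integral formulas for $G_1$ and $G_2$ make sense on $C^0[-b,b]$, so the right-hand side is meaningful even at $n=m$); moreover at each intermediate stage $k\le n-1$ the commutation relations are applied only to functions $u^{(j)}$ with $j\le n-1\le m-1$, which are of class $C^1$, and since $G_1$ and $G_2$ map $C^1[-b,b]$ into $C^1[-b,b]$ by Proposition \ref{Prop Operator G}(2), every expression to which $\ell=2\gamma_2\gamma_1$ is applied genuinely lies in $C^1$, so the extension \eqref{gamma2gamma1 C1} is legitimate throughout. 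The only other ingredients are the functions $(2\gamma_2\gamma_1)^m[1]$: the largest index that occurs is $m=n-2$ in \eqref{CommuteG1 2k}--\eqref{CommuteG1 2k+1} and $m=n-1$ in \eqref{CommuteG2 2k}--\eqref{CommuteG2 2k+1}, and by Remark \ref{Rem gamma2gamma1 1} these are well defined (and smooth enough to survive one further application of $\gamma_2\gamma_1$ at the intermediate stages) once $q\in C^{n-4}[-b,b]$, respectively $q\in C^{n-3}[-b,b]$, i.e. once $n\le p+4$, respectively $n\le p+3$; intersecting with $n\le m$ gives the stated ranges. The main difficulty is thus organizational rather than conceptual: one must keep the four interlocking formulas exactly aligned through the even$\leftrightarrow$odd alternation — especially the reindexing that absorbs the boundary term $(f-2)u^{(n+1)}(0)$ into the $j=n+1$ summand — and verify at each step that the argument of $\gamma_2\gamma_1$ is indeed $C^1$.
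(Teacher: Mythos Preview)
Your proof is correct and follows essentially the same approach as the paper: an induction based on the one-step commutation relations \eqref{G1G2commut}, together with the same regularity bookkeeping via Remark~\ref{Rem gamma2gamma1 1}. The paper first records the two-step relations \eqref{G1G2commut2} and then leaves the induction to the reader, whereas you carry out the alternating even/odd induction directly from \eqref{G1G2commut}; the content is the same, and your explicit handling of how the boundary term $(f-2)u^{(n+1)}(0)$ becomes the new $j=n+1$ summand is exactly the step the paper elides.
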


\begin{proof}
Applying \eqref{G1G2commut} twice we obtain the following commutation relations valid on $C^2[-b,b]$,
\begin{equation}\label{G1G2commut2}
    (2\gamma_2\gamma_1)^2 G_1 = \bigl(G_1 + (f-2)\cdot \delta\bigr)\partial^2 \quad \text{and}\quad
    (2\gamma_2\gamma_1)^2 \bigl(G_2-\delta\bigr) = \bigl(G_2-\delta\bigr)\partial^2 - 4\gamma_2\gamma_1[1]\cdot\delta\partial.
\end{equation}
Now using relations \eqref{G1G2commut} and \eqref{G1G2commut2} the proof is straightforward by the induction. The inequalities $n\le p+4$ for the function $G_1u$ and $n\le p+3$ for the function $G_2u$ are justified by the following observation. The right-hand sides of the expressions \eqref{CommuteG1 2k}--\eqref{CommuteG2 2k+1} involve the functions $(\gamma_2\gamma_1)^j[1]$ for either $j\le n-2$ (function $G_1u$) or $j\le n-1$ (function $G_2u$). Note that we do not need to apply $(\gamma_2\gamma_1)^n$ to $1$ in the left-hand sides of \eqref{CommuteG2 2k} and \eqref{CommuteG2 2k+1}. According to Remark \ref{Rem gamma2gamma1 1}, the function $(\gamma_2\gamma_1)^j[1]$ is well defined for all $j\le p+2$. Hence the condition $n-2\le p+2$, i.e., $n\le p+4$, is sufficient for the function $G_1u$ and the condition $n-1\le p+2$, i.e., $n\le p+3$, is sufficient for the function $G_2u$.
\end{proof}

\begin{remark}
In the above corollary, the condition $q\in C^p[-b,b]$ is used only to ensure the existence of the functions $(2\gamma_2\gamma_1)^j[1]$, $j\le p+2$. The corollary holds whenever the corresponding number of the functions $(2\gamma_2\gamma_1)^j[1]$ are well-defined independently on the smoothness of $q$.
\end{remark}

Now we show that the result of Corollary \ref{Cor G1G2 uCn} can be inverted in the following sense. Supposing that for a function $\eta(x)$ the functions $(\gamma_2\gamma_1)^j[\eta]$, $j=1,2,\ldots$ are well defined, we are going to show that the preimages of the function $\eta$ under the operators $G_1$ and $G_2$ possess corresponding numbers of derivatives.

Let $u\in C^1[-b,b]$ be a given function and denote $\eta:=G_1u$. Applying $G_2^{-1}$ to the first relation in \eqref{G1G2commut} we obtain
\begin{equation*}
\begin{split}
  2G_2^{-1}\gamma_2\gamma_1\eta & = G_2^{-1}\bigl(G_2 u'-u'(0)\bigr)=u'-u'(0)G_2^{-1}[1] \\
    & = u'-u'(0) = \partial G_1^{-1}\eta - \partial G_1^{-1}\eta(0).
\end{split}
\end{equation*}
It follows from the definition of the operator $G_1$ that $\eta(0)=u(0)$ and $\eta'(0)=u'(0)+hu(0)$. Hence $\partial G_1^{-1}\eta(0)=u'(0)=\eta'(0)-h\eta(0)$ and we obtain the first inverse commutation relation
\begin{equation}\label{InverseCommut1}
    G_2^{-1}[2\gamma_2\gamma_1\eta] = \partial G_1^{-1}\eta - \eta'(0)+h\eta(0).
\end{equation}

In order to obtain the second commutation relation consider $u\in C_0^1[-b,b]$ and denote $\eta=G_2u$. Applying $G_1^{-1}$ to the second relation in \eqref{G1G2commut} we obtain
\begin{equation}\label{invcommut_a}
    2G_1^{-1}\gamma_2\gamma_1\eta = G_1^{-1}\bigl(G_1 u' + f\cdot u'(0)-2u'(0)\bigr)= u'+u'(0) - 2u'(0)G_1^{-1}[1],
\end{equation}
where we used the inverse of the equality $G_1[1]=f$. It follows from the definition of the operator $G_2$ that $\eta'(0)=u'(0)$, hence \eqref{invcommut_a} can be written as
\[
2G_1^{-1}\gamma_2\gamma_2\eta = \partial G_2^{-1}\eta + \eta'(0) - 2\eta'(0)G_1^{-1}[1],
\]
or
\begin{equation}\label{InverseCommut2}
    G_1^{-1}\bigl[2\gamma_2\gamma_1\eta + 2\eta'(0)\bigr] = \partial G_2^{-1}\eta+\eta'(0).
\end{equation}

Since the operator $G_1$ is a bijection on $C^1[-b,b]$, the commutation relation \eqref{InverseCommut1} holds on the whole $C^1[-b,b]$. Moreover, it means that for any $\eta\in C^1[-b,b]$ (which is equivalent to the condition that there exists $\gamma_2\gamma_1\eta$ and it is a continuous function), the preimage $G_1^{-1}\eta$ is a continuously differentiable function. Similarly, the commutation relation \eqref{InverseCommut2} holds on $C_0^1[-b,b]$ and allows one to conclude that the preimage $G_2^{-1}\eta$ is also a continuously differentiable function. Since $G_2[1]=1$, the commutation relation \eqref{InverseCommut2} can be extended onto $C^1[-b,b]$ in the following way
\begin{equation}\label{InverseCommut2a}
    G_1^{-1}\bigl[2\gamma_2\gamma_1(\eta-\eta(0)) + 2\eta'(0)\bigr] = \partial G_2^{-1}\eta+\eta'(0).
\end{equation}

Suppose now that for $\eta\in C_0^1[-b,b]$ there exists $(2\gamma_2\gamma_1)^2\eta$ and is a continuous function. Denote $v:=2\gamma_2\gamma_1\eta$. We obtain from \eqref{InverseCommut1} and \eqref{InverseCommut2} that $G_1^{-1}v$ is continuously differentiable and
\begin{equation*}
    G_1^{-1}v = G_1^{-1}(2\gamma_2\gamma_1\eta)=\partial G_2^{-1}\eta + \eta'(0)-2\eta'(0)\cdot G_1^{-1}[1].
\end{equation*}
If additionally $G_1^{-1}[1]$ is continuously differentiable (or equivalently, by Corollary \ref{Cor gamma2gamma1 commut}, there exists $2\gamma_2\gamma_1[1]$), then $G_2^{-1}\eta$ is twice continuously differentiable and
\begin{equation}\label{invcommut_b}
    G_2^{-1}\bigl((2\gamma_2\gamma_1)^2\eta\bigr) = \partial^2 G_2^{-1}\eta - 2\eta'(0)\cdot \partial G_1^{-1}[1]-(2\gamma_2\gamma_1\eta)'(0),
\end{equation}
where we have used that $\gamma_2\gamma_1\eta(0)=0$. The equality \eqref{InverseCommut1} for $\eta=1$ reads as
\begin{equation*}
    G_2^{-1}\bigl(2\gamma_2\gamma_1[1]\bigr) = \partial G_1^{-1}[1]+h,
\end{equation*}
hence we can write \eqref{invcommut_b} as
\begin{equation*}
    G_2^{-1}\bigl((2\gamma_2\gamma_1)^2\eta\bigr) = \partial^2 G_2^{-1} \eta - 2\eta'(0)\left(G_2^{-1}\bigl(2\gamma_2\gamma_1[1]\bigr)-h\right) - (2\gamma_2\gamma_1\eta)'(0),
\end{equation*}
or
\begin{equation}\label{InverseCommut22}
    G_2^{-1}\Bigl((2\gamma_2\gamma_1)^2\eta + 2\eta'(0)\cdot 2\gamma_2\gamma_1[1]\Bigr) = \partial^2 G_2^{-1}\eta + 2h \eta'(0) - (2\gamma_2\gamma_1\eta)'(0).
\end{equation}

Similarly we obtain from \eqref{InverseCommut2} that $G_1^{-1}\eta$ is twice continuously differentiable function and that
\begin{equation}\label{InverseCommut12}
    G_1^{-1}\bigl( (2\gamma_2\gamma_1)^2\eta + 2(2\gamma_2\gamma_1\eta)'(0)\bigr) = \partial^2 G_1^{-1}\eta + (2\gamma_2\gamma_1\eta)'(0).
\end{equation}

In order to obtain the higher order differentiability of the preimages consider the following functions
\begin{align}
    u_0 & = \eta & v_0 &= \eta-\eta(0) \label{u0v0}\\
    u_{2n+1} &= 2\gamma_2\gamma_1 [u_{2n}], & v_{2n+1} &= 2\gamma_2\gamma_1 [v_{2n}] + 2 v'_{2n}(0),\label{u1v1}\\
    u_{2n+2} &= 2\gamma_2\gamma_1 [u_{2n+1}] + 2 u'_{2n+1}(0), & v_{2n+2} &= 2\gamma_2\gamma_1 [v_{2n+1}], & n=0,1,\ldots\label{u2v2}
\end{align}
We suppose that the recursive process stops when we cannot evaluate the operator $\gamma_2\gamma_1$ anymore, i.e., the obtained function is not continuously differentiable. Note that the function $u_n$ is a linear combination of the functions $(2\gamma_2\gamma_1)^j \eta$, $j\le n$ and the functions $(2\gamma_2\gamma_1)^j[1]$, $j\le n-2$. The function $v_n$ is a linear combination of the functions $(2\gamma_2\gamma_1)^j \eta$, $j\le n$ and the functions $(2\gamma_2\gamma_1)^j[1]$, $j\le n$ (however if $\eta(0)=0$ the function $(2\gamma_2\gamma_1)^n[1]$ does not participate).

Starting from the relations \eqref{InverseCommut1}, \eqref{InverseCommut2}, \eqref{InverseCommut22} and \eqref{InverseCommut12} by induction we obtain the following result.

\begin{proposition}\label{Prop Derivative Preimage}
Let a function $\eta \in C^1[-b,b]$ be given and the functions $u_j$ and $v_j$ be well defined by the formulas \eqref{u0v0}--\eqref{u2v2} for $j\le n$. Then the preimages $G_1^{-1}\eta$ and $G_2^{-1}\eta$ are  $n$ times continuously differentiable functions and the following relations hold,
\begin{align}
    \partial^{n}G_1^{-1}\eta & = G_1^{-1}[u_{n}] - u'_{n-2}(0), & \partial^{n}G_2^{-1}\eta & = G_2^{-1}[v_n]-2h v'_{n-2}(0) + v'_{n-1}(0), &\text{if } n \text{ is even}, \label{InverseCommut1 2n}\\
    \partial^{n}G_2^{-1}\eta &=  G_1^{-1}[v_n] - v'_{n-1}(0), & \partial^{n}G_1^{-1}\eta & = G_2^{-1}[u_{n}] + u'_{n-1}(0)-h u_{n-1}(0),  & \text{if } n \text{ is odd}. \label{InverseCommut1 2n+1}
\end{align}
\end{proposition}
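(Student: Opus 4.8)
The plan is to prove Proposition \ref{Prop Derivative Preimage} by induction on $n$, using the four base commutation relations \eqref{InverseCommut1}, \eqref{InverseCommut2a} (or \eqref{InverseCommut2}), \eqref{InverseCommut22} and \eqref{InverseCommut12} as the engine that passes from order $n$ to order $n+1$. The cases $n=0$ and $n=1$ are immediate: $n=0$ is trivial, and $n=1$ is exactly \eqref{InverseCommut1} for $\partial G_1^{-1}\eta$ (noting $u_1=2\gamma_2\gamma_1\eta$ and $v_1=2\gamma_2\gamma_1 v_0 + 2v_0'(0)=2\gamma_2\gamma_1(\eta-\eta(0))+2\eta'(0)$, which matches the bracket in \eqref{InverseCommut2a}) together with the analogous relation for $\partial G_2^{-1}\eta$. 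The key point is that the recursive definitions \eqref{u0v0}--\eqref{u2v2} are precisely engineered so that the correction terms appearing in the base commutation relations get absorbed: e.g.\ the term $2\eta'(0)\cdot 2\gamma_2\gamma_1[1]$ inside the argument of $G_2^{-1}$ on the left of \eqref{InverseCommut22} is exactly what is needed to convert $(2\gamma_2\gamma_1)^2\eta$ into $v_2$ after the first step produced $v_1$.

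First I would set up the induction hypothesis in the form of \eqref{InverseCommut1 2n}--\eqref{InverseCommut1 2n+1} for a fixed $n$, assuming in addition that all the functions $u_j,v_j$ for $j\le n$ (and hence, by the remark preceding the proposition, the required functions $(2\gamma_2\gamma_1)^j[1]$) are well defined and continuous. Then I would apply the appropriate base relation to the function $\eta$ replaced by the relevant iterate. Concretely, to go from $\partial^n$ to $\partial^{n+1}$ when $n$ is even: apply $\partial$ to $\partial^n G_1^{-1}\eta = G_1^{-1}[u_n]-u'_{n-2}(0)$, which gives $\partial^{n+1}G_1^{-1}\eta = \partial G_1^{-1}[u_n]$; now invoke \eqref{InverseCommut1} with $\eta$ replaced by $u_n$, which expresses $\partial G_1^{-1}[u_n]$ in terms of $G_2^{-1}[2\gamma_2\gamma_1 u_n]$ plus boundary corrections involving $u_n'(0)$ and $u_n(0)$; finally identify $2\gamma_2\gamma_1 u_n = u_{n+1}$ (since $n$ even means $n+1$ odd, and \eqref{u1v1} applies with no correction term) and check the boundary terms collapse to $u'_{n-1}(0)-hu_{n-1}(0)$, matching the second formula in \eqref{InverseCommut1 2n+1}. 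The three other parity/operator combinations are handled the same way, each time matching one of the four base relations to one of the four recursions, and each time verifying the algebra of the boundary terms.

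The main obstacle — and the step deserving the most care — is the bookkeeping of the boundary correction terms and the precise matching of which base relation feeds which recursion in each of the four cases (even/odd $n$ crossed with $G_1^{-1}$/$G_2^{-1}$). One has to be careful that, for instance, the relation \eqref{InverseCommut22} involves the argument $(2\gamma_2\gamma_1)^2\eta + 2\eta'(0)\cdot 2\gamma_2\gamma_1[1]$, so when it is applied to an iterate $v_{n}$ the extra term $2v_n'(0)\cdot 2\gamma_2\gamma_1[1]$ is exactly the correction in \eqref{u2v2} or \eqref{u1v1}; getting the indices $n-1$, $n-2$ right in the $u'_{n-2}(0)$, $v'_{n-1}(0)$ etc.\ terms requires tracking that one application of $2\gamma_2\gamma_1$ lowers differentiation order by one while producing a term evaluated at $0$ of the previous iterate. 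I would organize this as a short table indicating, for $n\to n+1$: which of $\partial^{n+1}G_i^{-1}\eta$ is being computed, which base relation is applied, to which iterate, and what the resulting boundary terms are, then verify each of the (essentially four) entries matches the claimed formulas.

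Finally I would remark that the well-definedness claims (that $G_1^{-1}\eta$, $G_2^{-1}\eta$ are genuinely $n$ times continuously differentiable) follow automatically along the induction: at each step the right-hand side of the base commutation relation is a continuous function whenever the next iterate $u_{n+1}$ or $v_{n+1}$ is (which is the standing hypothesis), and the left-hand side is a derivative of a continuous function, so by closedness of $\partial$ the derivative exists and is continuous — exactly as in the argument already used in the proof of relations \eqref{GammasCommutation2}. No new machinery beyond the four base relations, the recursions \eqref{u0v0}--\eqref{u2v2}, and the bijectivity of $G_1,G_2$ on $C^1[-b,b]$ (Proposition \ref{Prop Operator G}, item 3) is needed.
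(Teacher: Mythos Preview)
Your proposal is correct and is essentially the same approach as the paper: the paper's proof consists of the single sentence ``Starting from the relations \eqref{InverseCommut1}, \eqref{InverseCommut2}, \eqref{InverseCommut22} and \eqref{InverseCommut12} by induction we obtain the following result,'' so you have in fact supplied considerably more detail than the original. The only point to watch is the index bookkeeping you yourself flag as the main obstacle (e.g.\ in your sketch of the even-to-odd step the boundary correction should read $u_n'(0)-hu_n(0)$ rather than $u_{n-1}'(0)-hu_{n-1}(0)$), but the mechanism is exactly right.
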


The existence of the functions $u_j$ and $v_j$ can be guaranteed by imposing smoothness conditions on $q$ and $\eta$, c.f., Remark \ref{Rem gamma2gamma1 1} and Corollary \ref{Cor G1G2 uCn}. The following corollary holds.
\begin{corollary}\label{Corr Derivative Preimage}
Suppose that $q\in C^p[-b,b]$ and let a function $\eta\in C^n[-b,b]$ be given. Then $G_1^{-1}\eta \in C^m[-b,b]$, where  $m=\min\{n, p+4\}$.

Let a function $\eta\in C^n_0[-b,b]$ be given. Then $G_2^{-1}\eta\in C^m_0[-b,b]$, where $m=\min\{n, p+3\}$.
\end{corollary}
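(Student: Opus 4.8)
The plan is to deduce Corollary \ref{Corr Derivative Preimage} directly from Proposition \ref{Prop Derivative Preimage} by checking that, under the stated smoothness hypotheses on $q$ and $\eta$, the auxiliary functions $u_j$ and $v_j$ from \eqref{u0v0}--\eqref{u2v2} are well-defined (i.e., continuously differentiable, so that $\gamma_2\gamma_1$ can be applied one more time) for all indices up to the claimed $m$. The main bookkeeping is to track how many generalized derivatives of $\eta$ and how many of the functions $(2\gamma_2\gamma_1)^k[1]$ enter into $u_j$ and $v_j$, and to bound each of those by the available smoothness.

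First I would recall, as already noted in the text following \eqref{u2v2}, that $u_n$ is a linear combination of the functions $(2\gamma_2\gamma_1)^j\eta$ with $j\le n$ and the functions $(2\gamma_2\gamma_1)^j[1]$ with $j\le n-2$, whereas $v_n$ is a linear combination of $(2\gamma_2\gamma_1)^j\eta$ with $j\le n$ and $(2\gamma_2\gamma_1)^j[1]$ with $j\le n$. By Remark \ref{Rem gamma2gamma1 1}, since $q\in C^p[-b,b]$, the functions $(2\gamma_2\gamma_1)^j[1]$ are well-defined and continuous (indeed continuously differentiable when $j\le p+2$, so that one further application of $\gamma_2\gamma_1$ is legitimate) for $j\le p+3$. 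On the other hand, since $\eta\in C^n[-b,b]$, formula \eqref{gamma2gamma1 C1} shows that each application of $\gamma_2\gamma_1$ lowers the differentiability of $\eta$ by exactly one (the term $g'(x)$ dominates, the integral term being smoother), so $(2\gamma_2\gamma_1)^j\eta$ is $C^{n-j}$, hence continuously differentiable, for $j\le n-1$. Combining these two constraints: for $G_1^{-1}\eta$ we need $u_j$ continuously differentiable for $j\le m$, which by the above requires $j-1\le n-1$ (contribution of $\eta$) and $j-2\le p+3$ equivalently, carefully, $j-2\le p+2$ so that the $(2\gamma_2\gamma_1)^{j-2}[1]$ term is still $C^1$; this gives $j\le p+4$, so $m=\min\{n,p+4\}$. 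For $G_2^{-1}\eta$ the function $v_j$ contains $(2\gamma_2\gamma_1)^j[1]$, raising the requirement to $j\le p+3$ (when $\eta(0)\ne0$; if $\eta(0)=0$ this term drops and one could even gain one order, but the corollary is stated for the uniform case), together with $j\le n$; hence $m=\min\{n,p+3\}$.

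Next I would invoke Proposition \ref{Prop Derivative Preimage} with this value of $n$ (renamed $m$): it asserts precisely that $G_1^{-1}\eta$ and $G_2^{-1}\eta$ are $m$ times continuously differentiable, with $\partial^m$ expressed through $G_1^{-1}[u_m]$ or $G_2^{-1}[v_m]$ plus lower-order correction terms, all of which are continuous by the preceding paragraph. Finally, for the membership in $C_0^m[-b,b]$ in the second assertion, I would use part 2) of Proposition \ref{Prop Operator G}: $G_2^{-1}$ preserves the value at the origin, so $\eta\in C^n_0[-b,b]$ forces $G_2^{-1}\eta$ to vanish at $0$, and we have just shown it lies in $C^m[-b,b]$, giving $G_2^{-1}\eta\in C^m_0[-b,b]$.

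The step I expect to require the most care is pinning down the exact numerical thresholds $p+4$ and $p+3$ — in particular verifying that the last needed function $(2\gamma_2\gamma_1)^j[1]$ need only be \emph{continuous} rather than continuously differentiable when it appears merely as a coefficient in $u_n$ or $v_n$ (not inside a further $\gamma_2\gamma_1$), which is exactly the subtlety already flagged in the proof of Corollary \ref{Cor G1G2 uCn} (''we do not need to apply $(\gamma_2\gamma_1)^n$ to $1$ in the left-hand sides''). Getting this off-by-one accounting right is what makes the bounds $\min\{n,p+4\}$ and $\min\{n,p+3\}$ come out as stated rather than one unit smaller; everything else is a direct appeal to Proposition \ref{Prop Derivative Preimage} and Remark \ref{Rem gamma2gamma1 1}.
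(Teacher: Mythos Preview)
Your approach is correct and is exactly what the paper does: the paper's entire proof is the sentence preceding the corollary (``The existence of the functions $u_j$ and $v_j$ can be guaranteed by imposing smoothness conditions on $q$ and $\eta$, c.f., Remark \ref{Rem gamma2gamma1 1} and Corollary \ref{Cor G1G2 uCn}''), and you have spelled out precisely this bookkeeping and then invoked Proposition \ref{Prop Derivative Preimage}.

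One small correction: your parenthetical remark about $\eta(0)$ is backwards. The second assertion of the corollary is stated for $\eta\in C^n_0[-b,b]$, so $\eta(0)=0$ \emph{is} assumed; in that case the highest-order term $(2\gamma_2\gamma_1)^j[1]$ appearing in $v_j$ has index $j-1$ (not $j$), and this is exactly what gives $m\le p+3$. If $\eta(0)\ne 0$ one would get only $m\le p+2$, i.e.\ one order \emph{worse}, not better. This does not affect your final bound, which is correct.
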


The transmutation operators $G_1$ and $G_2$ together with Corollary \ref{Corr Derivative Preimage} allow us to prove a result similar to Proposition \ref{Prop a2n b2n} under different assumptions on the potential $q$. Suppose that in some neighborhood of $x=0$ the potential $q$ possesses continuous derivatives of all orders up to the order $N$. Then both functions $g_1(x)=\frac h2 + \frac 14\int_0^x q(s)\,ds$ and $g_2(x)=\frac 14\int_0^x q(s)\,ds$ participating in Theorem \ref{Th Kapprox} are $N+1$ times continuously differentiable at $x=0$ and there exist generalized derivatives $\gamma_1(\gamma_2\gamma_1)^j g_{1,2}$, $j\le N$ in a neighborhood of zero. By Theorem \ref{Thm Taylor} the following representations hold
\begin{equation}\label{Goursat conditions Taylor}
    \frac h2 + \frac 14\int_0^x q(s)\,ds = \sum_{n=0}^{N+1} a_n \frac{\mathbf{c}_n(x)}{n!}+o(x^{N+1})\quad \text{and}\quad
    \frac 14\int_0^x q(s)\,ds = \sum_{n=1}^{N+1} b_n \frac{\mathbf{s}_n(x)}{n!}+o(x^{N+1}).
\end{equation}
By Corollary \ref{Corr Derivative Preimage} the functions $\widetilde g_1:=G_1^{-1}g_1$ and $\widetilde g_2:=G_2^{-1}g_2$ are $N+1$ times continuously differentiable at $x=0$. Since the operators $G_1^{-1}$ and $G_2^{-1}$ are bounded and map the functions $\mathbf{c}_n$ and $\mathbf{s}_n$ into $2^{n-1}x^n$ (see Proposition \ref{Prop Operator G}), the following Taylor formulas are valid
\begin{equation*}
    \widetilde g_1(x)=a_0 + \sum_{n=1}^{N+1} \frac{2^{n-1}a_n x^n}{n!} + o(x^{N+1})\qquad\text{and}\qquad
    \widetilde g_2(x)=\sum_{n=1}^{N+1} \frac{2^{n-1}b_n x^n}{n!} + o(x^{N+1}).
\end{equation*}
Due to the definition of the operator $G$ the functions $\widetilde g_1$ and $\widetilde g_2$ coincide (up to some additive constants) with the functions $\phi(x)+\psi(x)$ and $\phi(x)-\psi(x)$ from \eqref{k via phi psi}, respectively. Repeating the proof of Proposition \ref{Prop a2n b2n} and using the uniqueness of the Taylor coefficients one can obtain the following result.
\begin{proposition}\label{Prop a2n b2n 2}
Let $q\in C^N[-c,c]$, where $[-c,c]\subset [-b,b]$ and let the coefficients $a_n$, $b_n$, $n\le N+1$ be defined by \eqref{Goursat conditions Taylor}. Then \[
a_{2n}=b_{2n},\qquad n=1,\ldots,\left[\frac{N+1}2\right].
\]
\end{proposition}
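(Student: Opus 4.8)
The strategy is to imitate the proof of Proposition \ref{Prop a2n b2n}, replacing the uniformly convergent series used there by the finite generalized Taylor expansions supplied by Theorem \ref{Thm Taylor}, and then to reduce the claim to the evenness of $\psi'$ recorded in Remark \ref{Rem psi prime is even}. First I would take the two expansions \eqref{Goursat conditions Taylor} for $g_1(x)=\frac h2+\frac14\int_0^x q(s)\,ds$ and $g_2(x)=\frac14\int_0^x q(s)\,ds$; these are legitimate because $q\in C^N[-c,c]$ makes $g_{1,2}\in C^{N+1}$ near the origin, hence (by Remark \ref{Rem gamma2gamma1 1}) the generalized derivatives $\gamma_1(\gamma_2\gamma_1)^j g_{1,2}$ exist and are continuous near $0$ for $j\le N$. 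Applying $G_1^{-1}$ and $G_2^{-1}$ and using the mapping property of Proposition \ref{Prop Operator G} together with Corollary \ref{Corr Derivative Preimage}, one gets, exactly as already observed in the paragraph preceding the statement, the ordinary Taylor expansions
\[
\widetilde g_1(x)=a_0+\sum_{n=1}^{N+1}\frac{2^{n-1}a_n}{n!}\,x^n+o(x^{N+1}),\qquad
\widetilde g_2(x)=\sum_{n=1}^{N+1}\frac{2^{n-1}b_n}{n!}\,x^n+o(x^{N+1}),
\]
where $\widetilde g_1:=G_1^{-1}g_1$ and $\widetilde g_2:=G_2^{-1}g_2$ are $N+1$ times continuously differentiable near $0$. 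By uniqueness of the Taylor coefficients this gives $\widetilde g_1^{(n)}(0)=2^{n-1}a_n$ and $\widetilde g_2^{(n)}(0)=2^{n-1}b_n$ for $1\le n\le N+1$.

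Next I would identify the difference $\widetilde g_1-\widetilde g_2$ with the function $\psi$ of \eqref{k via phi psi}. Applying the defining relation \eqref{opG} of $G$ to the pair $u=\mathbf K$, $\widetilde u=\mathbf k:=\mathbf T_f^{-1}\mathbf K$, and using that, by the Goursat conditions \eqref{GoursatKh2}, $\frac12\bigl(\mathbf K(x,x)+\mathbf K(x,-x)\bigr)=g_1$ and $\frac12\bigl(\mathbf K(x,x)-\mathbf K(x,-x)\bigr)=g_2$, one obtains $\widetilde g_1=G_1^{-1}g_1=\frac12\bigl(\mathbf k(x,x)+\mathbf k(x,-x)\bigr)$ and $\widetilde g_2=G_2^{-1}g_2=\frac12\bigl(\mathbf k(x,x)-\mathbf k(x,-x)\bigr)$. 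Hence, by \eqref{k via phi psi}, $\widetilde g_1(x)-\widetilde g_2(x)=\mathbf k(x,-x)=\varphi(0)+\psi(x)$, i.e. $\widetilde g_1-\widetilde g_2$ coincides with $\psi$ up to an additive constant.

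Finally I would invoke Remark \ref{Rem psi prime is even}: since $\psi'$ is even, $\psi(x)+\psi(-x)$ is constant, so every Taylor coefficient of $\psi$ of even order $\ge 2$ vanishes, that is $\psi^{(2k)}(0)=0$ for $k\ge1$. Combining this with the two previous paragraphs, for each even index $2k$ with $2\le 2k\le N+1$ one has $0=\psi^{(2k)}(0)=\widetilde g_1^{(2k)}(0)-\widetilde g_2^{(2k)}(0)=2^{2k-1}(a_{2k}-b_{2k})$, whence $a_{2k}=b_{2k}$ for $k=1,\dots,\bigl[(N+1)/2\bigr]$, which is precisely the assertion. I do not expect a genuine obstacle at this stage: the real technical content sits in Corollary \ref{Corr Derivative Preimage} — and, underneath it, in the fact that $G_1^{-1}$ and $G_2^{-1}$ carry an $o(x^{N+1})$ remainder again to $o(x^{N+1})$, because their Neumann expansions consist of integral operators with bounded kernels whose integration ranges collapse to $\{0\}$ as $x\to0$, so that each such term improves the order by a factor $x$ — while what remains above is only the constant bookkeeping and the index count. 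The hypothesis that $q$ is smooth merely on the subinterval $[-c,c]$ is harmless, since every ingredient used (Theorem \ref{Thm Taylor}, the differentiability part of Corollary \ref{Corr Derivative Preimage}, Remark \ref{Rem psi prime is even}) is local at $x=0$.
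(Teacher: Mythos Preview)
Your proposal is correct and follows essentially the same route as the paper: the paper's proof is the paragraph immediately preceding the statement, which likewise passes to the preimages $\widetilde g_{1,2}=G_{1,2}^{-1}g_{1,2}$, invokes Corollary \ref{Corr Derivative Preimage} for their $C^{N+1}$-smoothness, identifies them (up to additive constants) with the combinations $\varphi\pm\psi$ from \eqref{k via phi psi}, and then repeats the argument of Proposition \ref{Prop a2n b2n} using the evenness of $\psi'$ and the uniqueness of Taylor coefficients. Your direct computation $\widetilde g_1-\widetilde g_2=\mathbf{k}(x,-x)=\varphi(0)+\psi(x)$ is a mild streamlining of that identification, and your remark on why $G_1^{-1}$, $G_2^{-1}$ preserve the $o(x^{N+1})$ remainder supplies a detail the paper leaves implicit.
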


\section{Convergence rate estimates for the analytic approximation of transmutation operators}\label{Sect ConvEstimates}
In this section we establish relations between the smoothness of the potential $q$ and the decrease rate of $\varepsilon_{1,2}$ from Theorem \ref{Th Kapprox} as functions of $N$. As a result, we establish convergence rate estimates for the approximations $K_N$ of the integral kernel $\mathbf{K}$.

\begin{theorem}\label{Thm Direct}
Let $q\in C^p[-b,b]$. Then there exists a constant $\widetilde c_p$ and a sequence of the approximate kernels $K_N$ of the form \eqref{K(x,t)} such that
\begin{equation}\label{K-KN error}
    \max_{(x,t)\in\overline{\mathbf{S}}}\bigl| \mathbf{K}(x,t) - K_N(x,t)\bigr|\le \frac{\widetilde c_p}{N^{p+1}}.
\end{equation}
\end{theorem}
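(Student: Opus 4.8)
The plan is to reduce the kernel approximation error to a one-dimensional approximation problem and then invoke a classical Jackson-type theorem. By Theorem~\ref{Th Kapprox}, if the Goursat data $g_1(x)=\frac h2+\frac14 Q(x)$ and $g_2(x)=\frac14 Q(x)$ are approximated uniformly on $[-b,b]$ by a $\mathbf{c}$-polynomial $\sum_{n=0}^N a_n\mathbf{c}_n$ and an $\mathbf{s}$-polynomial $\sum_{n=1}^N b_n\mathbf{s}_n$ with errors $\varepsilon_1$ and $\varepsilon_2$, then $\|\mathbf{K}-K_N\|_{\overline{\mathbf S}}\le\varepsilon$ with $\varepsilon$ controlled by $\varepsilon_1,\varepsilon_2$ and $q$. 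So it suffices to show that for $q\in C^p[-b,b]$ one can achieve $\varepsilon_1,\varepsilon_2\le \widetilde c_p/N^{p+1}$.

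First I would translate the approximation-by-$\mathbf{c}$-polynomials problem into an ordinary polynomial approximation problem using the operators $G_1,G_2$ of Proposition~\ref{Prop Operator G}. Since $G_1$ maps $2^{n-1}x^n\mapsto\mathbf{c}_n$ (and $1\mapsto\mathbf{c}_0$), and $G_2$ maps $2^{n-1}x^n\mapsto\mathbf{s}_n$, approximating $g_1$ by a $\mathbf{c}$-polynomial of order $N$ is equivalent to approximating $\widetilde g_1:=G_1^{-1}g_1$ by an ordinary polynomial of degree $N$, with the error amplified only by the bounded norm $\|G_1\|$; similarly for $g_2$ and $\widetilde g_2:=G_2^{-1}g_2$. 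Now $q\in C^p[-b,b]$ gives $g_1,g_2\in C^{p+1}[-b,b]$, and by Corollary~\ref{Corr Derivative Preimage} the preimages satisfy $\widetilde g_1\in C^{m}[-b,b]$, $\widetilde g_2\in C_0^{m}[-b,b]$ with $m=\min\{p+1,p+4\}=p+1$ (respectively $m=\min\{p+1,p+3\}=p+1$). Thus $\widetilde g_1,\widetilde g_2\in C^{p+1}[-b,b]$.

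Next I would apply the classical Jackson theorem: a function in $C^{p+1}[-b,b]$ can be approximated in the uniform norm by algebraic polynomials of degree $N$ with error $O(N^{-(p+1)})$ (one gains an extra factor from the modulus of continuity of the top derivative, but $N^{-(p+1)}$ already suffices). This yields polynomials $P_N,\widetilde P_N$ of degree $N$ with $\|\widetilde g_1-P_N\|,\|\widetilde g_2-\widetilde P_N\|\le c_p/N^{p+1}$; for $g_2$ one should take $\widetilde P_N$ with $\widetilde P_N(0)=0$, which is harmless since $\widetilde g_2(0)=0$ (subtract the value at $0$, changing the bound by at most a factor $2$). Writing $P_N(x)=\sum_{n=0}^N c_n 2^{n-1}x^n$ (with the $n=0$ term handled via $\mathbf{c}_0$) and applying $G_1$, respectively applying $G_2$ to $\widetilde P_N$, gives the required coefficients $a_n,b_n$ with $\varepsilon_1\le\|G_1\|\,c_p/N^{p+1}$ and $\varepsilon_2\le 2\|G_2\|\,c_p/N^{p+1}$. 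Feeding these into Theorem~\ref{Th Kapprox} produces the bound \eqref{K-KN error} with $\widetilde c_p$ depending on $p$, $q$, $b$ and the operator norms.

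The main obstacle, and the point requiring care, is the dependence of $\varepsilon$ on $\varepsilon_1,\varepsilon_2$ in Theorem~\ref{Th Kapprox}: one must check that this dependence is (at worst) linear, or at least Lipschitz near zero, so that an $O(N^{-(p+1)})$ bound on $\varepsilon_1,\varepsilon_2$ transfers to an $O(N^{-(p+1)})$ bound on $\varepsilon$; this can be traced through the integral-equation estimate \eqref{IntEq for K} governing $\mathbf{K}$, where a Gr\"onwall-type argument gives $\varepsilon\le C(b,\|q\|)(\varepsilon_1+\varepsilon_2)$. A secondary point is confirming that the constant in Jackson's theorem and the operator norms $\|G_1\|,\|G_2\|$ (which depend on $\mathbf{K}$, hence on $q$ and $b$, but not on $N$) can be absorbed into a single $N$-independent constant $\widetilde c_p$; since all these quantities are finite and independent of $N$, this is immediate. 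Everything else is bookkeeping.
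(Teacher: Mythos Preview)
Your proposal is correct and follows essentially the same route as the paper: pull the Goursat data $g_1,g_2$ back through $G_1^{-1},G_2^{-1}$, use Corollary~\ref{Corr Derivative Preimage} to get $C^{p+1}$ regularity of the preimages, apply Jackson's theorem, and push forward via the bounded operators $G_1,G_2$ into Theorem~\ref{Th Kapprox}. The paper handles the linear dependence of $\varepsilon$ on $\varepsilon_1+\varepsilon_2$ by citing \cite[Theorem 5.1]{KT AnalyticApprox} (an explicit bound $\varepsilon\le C(\varepsilon_1+\varepsilon_2)$ with $C=3I_0(b\sqrt{M})$ is also derived in Theorem~\ref{Th Kapprox Restr}), so your Gr\"onwall remark is on target; otherwise the arguments coincide.
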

\begin{proof}
Consider the functions $g_1(x)=\frac h2 + \frac 14\int_0^x q(s)\,ds$ and $g_2(x)=\frac 14\int_0^x q(s)\,ds$ and their preimages $\widetilde g_1:=G_1^{-1}g_1$ and $\widetilde g_2:=G_2^{-1}g_2$. Since $g_1\in C^{p+1}[-b,b]$ and $g_2\in C_0^{p+1}[-b,b]$, by Corollary \ref{Corr Derivative Preimage} we have that $\widetilde g_1\in C^{p+1}[-b,b]$ and $\widetilde g_2\in C_0^{p+1}[-b,b]$. By Jackson's theorem \cite[Chap.4, Sec.6]{Cheney} there exist a constant $c_p$ and sequences of polynomials $\{p_N\}_{N\in\mathbb{N}}$ and $\{q_N\}_{N\in\mathbb{N}}$ such that
\begin{equation*}
    \max_{x\in[-b,b]}\bigl| \widetilde g_1(x)-p_N(x)\bigr| \le \frac{c_p}{N^{p+1}}\qquad \text{and}\qquad \max_{x\in[-b,b]}\bigl| \widetilde g_2(x)-q_N(x)\bigr| \le \frac{c_p}{N^{p+1}},
\end{equation*}
and additionally polynomials $q_N$ satisfy $q_N(0)=0$.

Applying the operators $G_1$ and $G_2$ we obtain that for each $N$ there exist such complex numbers $a_0^{(N)},\ldots,a_N^{(N)},b_1^{(N)},\ldots,b_N^{(N)}$ that the constants $\varepsilon_1$ and $\varepsilon_2$ in \eqref{KxxErr} and \eqref{KxmxErr} satisfy
\[
\varepsilon_1\le \frac{c_p\|G_1\| }{N^{p+1}}\qquad \text{and}\qquad \varepsilon_2\le \frac{c_p\|G_2\| }{N^{p+1}},
\]
hence
\begin{equation*}
    \max_{(x,t)\in\overline{\mathbf{S}}}\bigl| \mathbf{K}(x,t) - K_N(x,t)\bigr|\le c(\varepsilon_1+\varepsilon_2)\le \frac{\tilde c_p}{N^{p+1}},
\end{equation*}
c.f., \cite[Theorem 5.1]{KT AnalyticApprox}.
\end{proof}

\begin{remark}
Let a function $g\in C^p[-b,b]$ be given. Then its sequence of polynomials of the best uniform approximation $\{p_n\}_{n\in\mathbb{N}}$ satisfies
\begin{equation*}
    \max_{x\in[-b,b]}\bigl| g(x)-p_n(x)\bigr|=o\left(\frac 1{N^{p+1}}\right),\qquad N\to \infty,
\end{equation*}
see, e.g., \cite[Thm. VIII]{Jackson}. Hence one can obtain a slightly stronger result than \eqref{K-KN error}, namely that
\begin{equation}\label{K-KN error2}
    \max_{(x,t)\in\overline{\mathbf{S}}}\bigl| \mathbf{K}(x,t) - K_N(x,t)\bigr| = o\left(\frac{1}{N^{p+1}}\right).
\end{equation}

For the estimate \eqref{K-KN error} to hold $q^{(p)}$ does not need to be a continuous function, it is sufficient that the derivative be bounded or that $q^{(p-1)}\in \operatorname{Lip} [-b,b]$. We do not enter into further details in the present paper.
\end{remark}

The order of the convergence rate in Theorem \ref{Thm Direct} is close to optimal as the following partial inverse result shows.  Before we need the following lemma.

\begin{lemma}\label{Lemma Inverse gamma2gamma1}
Suppose that for a function $u$ there exist all generalized derivatives $(2\gamma_2\gamma_1)^ju$ for $j\le k$ on a segment $[-d,d]\subset[-b,b]$, the last generalized derivative $(2\gamma_2\gamma_1)^ku\in C^\ell[-d,d]$ and $q\in C^m[-d,d]$, where $k$, $\ell$ and $m$ are some non-negative integers. Then the function $u$ possesses $\min\{k+\ell, m+2\}$ continuous derivatives on the segment $[-d,d]$.
\end{lemma}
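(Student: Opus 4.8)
The plan is to argue by induction on $k$, performing at each step a short bootstrap based on the elementary first-order identity encoded by the operator $2\gamma_2\gamma_1$. For $k=0$ there is nothing to prove: $(2\gamma_2\gamma_1)^0u=u\in C^\ell[-d,d]$ and $\min\{k+\ell,m+2\}=\min\{\ell,m+2\}\le\ell$.

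For the inductive step, assume the assertion holds with $k$ replaced by $k-1$, and let $u$ satisfy the hypotheses for some $k\ge 1$. Put $w:=2\gamma_2\gamma_1u$. The mere existence of this generalized derivative already forces $u\in C^1[-d,d]$ via formula \eqref{gamma2gamma1 C1}, which moreover gives
\[
u'(x)=u'(0)+w(x)+\int_0^x q(s)u(s)\,ds,\qquad x\in[-d,d].
\]
The function $w$ satisfies the hypotheses of the lemma with $k$ replaced by $k-1$ and with the same $\ell$ and $m$: indeed $(2\gamma_2\gamma_1)^jw=(2\gamma_2\gamma_1)^{j+1}u$ exists for $j\le k-1$, and $(2\gamma_2\gamma_1)^{k-1}w=(2\gamma_2\gamma_1)^ku\in C^\ell[-d,d]$. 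Hence, by the induction hypothesis, $w\in C^{\min\{k+\ell-1,\,m+2\}}[-d,d]$.

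Now I bootstrap the regularity of $u$ from the displayed identity. Suppose $u\in C^r[-d,d]$ for some $r\ge 1$; one may start with $r=1$. Then $qu\in C^{\min\{m,r\}}[-d,d]$, so $\int_0^x q(s)u(s)\,ds\in C^{\min\{m,r\}+1}[-d,d]=C^{\min\{m+1,\,r+1\}}[-d,d]$, and combining this with the regularity of $w$ yields $u'\in C^{\min\{k+\ell-1,\,m+1,\,r+1\}}[-d,d]$, that is, $u\in C^{\min\{k+\ell,\,m+2,\,r+2\}}[-d,d]$. Iterating the map $r\mapsto\min\{k+\ell,\,m+2,\,r+2\}$ starting from $r=1$, the value increases by $2$ at each step until it reaches $\min\{k+\ell,\,m+2\}$, after which it is stationary; so after finitely many steps $u\in C^{\min\{k+\ell,\,m+2\}}[-d,d]$, which closes the induction. (When $\ell=0$, i.e.\ $(2\gamma_2\gamma_1)^ku$ is merely continuous, the argument applies verbatim, reading $C^0$ as ``continuous''.)

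The one point requiring care is the bookkeeping of the three competing regularity indices: $k+\ell$, coming down the chain of generalized derivatives; $m+2$, reflecting that the smoothness of $q$ reaches $u$ only after $qu$ and then $u'$ are integrated; and the self-improving index $r+2$ produced by the bootstrap. Verifying that the iteration stabilizes precisely at $\min\{k+\ell,m+2\}$ — neither stalling nor overshooting — is the short monotonicity argument above; everything else (e.g.\ the facts that $g\in C^j\Rightarrow\int_0^x g\in C^{j+1}$ and that a product lies in $C^{\min}$ of the factors' regularities) is routine.
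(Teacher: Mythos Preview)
Your argument is correct. It differs from the paper's in how a single step of the induction is carried out. The paper writes down the general solution of the second-order equation $2\gamma_2\gamma_1\eta=u$ explicitly in terms of the nonvanishing particular solution $f$,
\[
\eta(x)=c_1 f(x)+c_2 f(x)\!\int_0^x\!\frac{ds}{f^2(s)}+f(x)\!\int_0^x\!\Bigl(\tfrac{u(s)}{f(s)}-\tfrac{1}{f^2(s)}\!\int_0^s f'(t)u(t)\,dt\Bigr)ds,
\]
and reads off directly that $u\in C^\ell$, $f\in C^{m+2}$ imply $\eta\in C^{\min\{m+2,\ell+1\}}$; iterating $k$ times gives the conclusion without any inner bootstrap. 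Your route avoids $f$ entirely and uses only the first-order identity $u'=u'(0)+w+\int_0^x qu$, paying for that simplicity with the self-improvement loop $r\mapsto\min\{k+\ell,m+2,r+2\}$. The trade-off is genuine: the paper's version is one-shot per inductive step but needs the explicit inversion formula and the auxiliary regularity $f\in C^{m+2}$; your version is more elementary and stays at the level of $q$, at the cost of the extra (but short) bootstrap.
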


\begin{proof}
The general solution of the equation $2\gamma_2\gamma_1 \eta=u$ can be written in the form
\begin{equation*}
    \eta(x)=c_1f(x) + c_2f(x)\int_0^x \frac {ds}{f^2(s)} + f(x) \int_0^x \biggl(\frac{u(s)}{f(s)} - \frac 1{f^2(s)}\int_0^s f'(t)u(t)\,dt\biggr)ds.
\end{equation*}
Hence if $u\in C^\ell$ and $f\in C^n$ then $\eta\in C^{\min\{n,\ell+1\}}$. The result of the lemma follows by induction taking into account that for $q\in C^m[-d,d]$ we have $f\in C^{m+2}[-d,d]$.
\end{proof}

\begin{theorem}\label{Thm Inverse}
Suppose that there exist a constant $p\in\mathbb{N}$, constants $\delta>0$ and $c_p>0$, and for each $N$ there exist such complex constants $a_0^{(N)},\ldots,a_N^{(N)},b_1^{(N)},\ldots,b_N^{(N)}$ that the constants $\varepsilon_1$ and $\varepsilon_2$ in \eqref{KxxErr} and \eqref{KxmxErr} satisfy
\begin{equation}\label{InvIneq}
    \max\{\varepsilon_1, \varepsilon_2\}\le \frac{c_p}{N^{p+1+\delta}}.
\end{equation}
Then $q\in C^p(-b,b)$.
\end{theorem}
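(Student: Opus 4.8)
The strategy is to reverse the chain of reasoning used in the direct theorem. By hypothesis \eqref{InvIneq}, the inequalities \eqref{KxxErr} and \eqref{KxmxErr} hold with $\varepsilon_{1,2}\le c_p N^{-(p+1+\delta)}$, which means that the functions $g_1(x)=\frac h2+\frac14\int_0^x q(s)\,ds$ and $g_2(x)=\frac14\int_0^x q(s)\,ds$ are approximated on $[-b,b]$ by the $\mathbf c$-polynomials $\sum_{n=0}^N a_n^{(N)}\mathbf c_n$ and $\mathbf s$-polynomials $\sum_{n=1}^N b_n^{(N)}\mathbf s_n$ at the rate $O(N^{-(p+1+\delta)})$. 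Applying the bounded operators $G_1^{-1}$ and $G_2^{-1}$ (Proposition \ref{Prop Operator G}) and using that they send $\mathbf c_n$, $\mathbf s_n$ to $2^{n-1}x^n$, we obtain ordinary polynomials $p_N$, $q_N$ of degree $\le N$ with
\[
\max_{x\in[-b,b]}\bigl|\widetilde g_1(x)-p_N(x)\bigr|\le \frac{c_p\|G_1^{-1}\|}{N^{p+1+\delta}},\qquad
\max_{x\in[-b,b]}\bigl|\widetilde g_2(x)-q_N(x)\bigr|\le \frac{c_p\|G_2^{-1}\|}{N^{p+1+\delta}},
\]
where $\widetilde g_1:=G_1^{-1}g_1$ and $\widetilde g_2:=G_2^{-1}g_2$.

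Next I would invoke the converse part of Jackson's theory (a Bernstein-type inverse theorem, e.g. \cite[Chap.~4]{Cheney} or \cite{Jackson}): if a function on $[-b,b]$ can be approximated by polynomials of degree $N$ with error $O(N^{-(p+1+\delta)})$ for some $\delta>0$, then the function belongs to $C^{p+1}[-b,b]$ (more precisely its $(p+1)$-st derivative is continuous, in fact Hölder of some exponent). Applying this to $\widetilde g_1$ and $\widetilde g_2$ yields $\widetilde g_1\in C^{p+1}[-b,b]$ and $\widetilde g_2\in C^{p+1}_0[-b,b]$. Then, applying the operators $G_1$ and $G_2$ together with their commutation relations with $\gamma_2\gamma_1$ from Corollary \ref{Cor gamma2gamma1 commut} (or directly the fact that $G_1^{-1}$, $G_2^{-1}$ intertwine differentiation with $\gamma_2\gamma_1$, as made explicit in Proposition \ref{Prop Derivative Preimage}), one concludes that $g_1$ and $g_2$ possess the appropriate number of generalized derivatives $(\gamma_2\gamma_1)^j$; equivalently, rewriting $g_{1,2}$ in terms of $(\gamma_2\gamma_1)[1]$ as in Example \ref{Ex a2n b2n}, that $(2\gamma_2\gamma_1)^j[1]$ and hence the relevant generalized derivatives of $g_{1,2}$ are well defined up to a suitable order.

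The final step is to transfer this regularity back to $q$ itself. Here I would use Lemma \ref{Lemma Inverse gamma2gamma1}: the fact that $g_2=-\tfrac12(\gamma_2\gamma_1)[1]$ has a certain number of generalized derivatives, together with $g_2'=\tfrac14 q$, should feed into the recursion $(2\gamma_2\gamma_1)[1](x)=-Q(x)$, $(2\gamma_2\gamma_1)^{j}[1]=\tfrac12\bigl(((2\gamma_2\gamma_1)^{j-1}[1])'(x) - \int_0^x q(s)(2\gamma_2\gamma_1)^{j-1}[1](s)\,ds\bigr)-\frac12((2\gamma_2\gamma_1)^{j-1}[1])'(0)$ from \eqref{gamma2gamma1 C1}, and one reads off inductively that each extra well-defined generalized derivative buys one extra derivative of $Q$, i.e. of $q$. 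Since the conclusion is local ($q\in C^p(-b,b)$, an open-interval statement), one is free to work on any $[-d,d]\subset(-b,b)$ and Lemma \ref{Lemma Inverse gamma2gamma1} with $m$ large (bootstrapping) gives $q\in C^p$ on that subinterval, hence on all of $(-b,b)$.

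The main obstacle I expect is bookkeeping the exact loss of one derivative at several places — the Jackson inverse theorem produces $C^{p+1}$ of the \emph{preimage} $\widetilde g_{1,2}$, then passing to $g_{1,2}$ via $G$ and the commutation relations costs something, then passing from the generalized derivatives of $g_{1,2}$ back to ordinary derivatives of $q$ via Lemma \ref{Lemma Inverse gamma2gamma1} costs something more — so that the counting closes precisely at $q\in C^p$ and not $C^{p-1}$ or $C^{p+1}$. A secondary subtlety is that the $\delta>0$ in \eqref{InvIneq} is essential: it is exactly what the Bernstein-type inverse theorem needs to upgrade an $O(N^{-(p+1)})$ rate (which a merely $C^p$ function has) to genuine $C^{p+1}$ smoothness of $\widetilde g_{1,2}$, thereby yielding $C^p$ rather than only $C^{p-1}$ for $q$; one must be careful that the $\delta$ is not consumed before it is used.
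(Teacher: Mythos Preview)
Your plan is essentially the paper's own proof: pull back via the bounded operators $G_1^{-1},G_2^{-1}$ to get polynomial approximation of $\widetilde g_{1,2}$ at rate $O(N^{-(p+1+\delta)})$, invoke an inverse (Bernstein-type) approximation theorem, then use Corollary~\ref{Cor G1G2 uCn} to produce generalized derivatives of $g_1$ and Lemma~\ref{Lemma Inverse gamma2gamma1} in a bootstrap on $[-d,d]\subset(-b,b)$ to upgrade $q$ from $C^m$ to $C^{m+1}$ until $m=p$.

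One small correction worth making explicit: the inverse approximation theorem on a compact interval does \emph{not} give $\widetilde g_{1,2}\in C^{p+1}[-b,b]$ as you state; it only yields $C^{p+1}(-b,b)$ on the open interval (and roughly $C^{[(p+1)/2]}$ up to the endpoints), which is precisely why the theorem's conclusion is $q\in C^p(-b,b)$ and why one must pass to a compact subinterval $[-d,d]$ before invoking Corollary~\ref{Cor G1G2 uCn} and Lemma~\ref{Lemma Inverse gamma2gamma1}. You do eventually localize, so the argument survives, but the intermediate claim should be weakened accordingly.
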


\begin{proof}
Again, consider the functions $g_1(x)=\frac h2 + \frac 14\int_0^x q(s)\,ds$ and $g_2(x)=\frac 14\int_0^x q(s)\,ds$ and their preimages $\widetilde g_1:=G_1^{-1}g_1$ and $\widetilde g_2:=G_2^{-1}g_2$. Due to Proposition \ref{Prop Operator G} the preimages
\begin{equation*}
    G_1^{-1}\left(\sum_{n=0}^N a_n^{(N)} \mathbf{c}_n\right)\qquad \text{and}\qquad G_2^{-1}\left(\sum_{n=1}^N b_n^{(N)} \mathbf{s}_n\right)
\end{equation*}
are polynomials of degree less or equal to $N$. Since the operators $G_1^{-1}$ and $G_2^{-1}$ are bounded, the inequality \eqref{InvIneq} implies that the functions $\widetilde g_1$ and $\widetilde g_2$ can be approximated by sequences of polynomials in such a way that the approximation errors decrease at least as $O(N^{-p-1-\delta})$. Due to the inverse approximation theorem (see, e.g., \cite[Theorem 31]{KKTT}), $\widetilde g_1\in C^{p+1}(-b,b)\cap C^{[(p+1)/2]}[-b,b]$ and $\widetilde g_2\in C_0^{p+1}(-b,b)\cap C_0^{[(p+1)/2]}[-b,b]$.

Let $[-d,d]\subset(-b,b)$ and assume that $q\in C^m[-d,d]$ but $q\not\in C^{m+1}[-d,d]$ where $m<p$. In such case applying Corollary \ref{Cor G1G2 uCn} to the function $\widetilde g_1$ on the segment $[-d,d]$ we obtain that there exist generalized derivatives $(2\gamma_2\gamma_1)^j g_1$ of all orders up to $k=\min\{p+1, m+4\}$. Due to the assumption $m<p$ we have that $k\ge m+2$. Lemma \ref{Lemma Inverse gamma2gamma1} provides that $g_1\in C^{m+2}[-d,d]$, contradicting the assumption $q\not\in C^{m+1}[-d,d]$ as $q=g_1'$.
\end{proof}

\begin{remark}\label{Rem DirectInverse Alt}
There is another possibility to obtain convergence rate estimates similar to those of Theorems \ref{Thm Direct} and \ref{Thm Inverse}. It is based on the following simple observation: for each fixed $x$, the expression
\begin{equation}\label{KN alt}
    \widetilde K_N(x,t):= a_0 u_0(x,t) + \sum_{n=1}^{2[N/2]+1}a_n u_{2n-1}(x,t) + \sum_{n=1}^{2[(N+1)/2]} b_n u_{2n}(x,t),
\end{equation}
where $[\cdot]$ denotes the integer part function, is a polynomial in $t$ whose degree is less or equal to $N$. All generalized wave polynomials $u_n$ which are not included in \eqref{KN alt} contain terms with powers of $t$ higher than $t^N$, and expression \eqref{KN alt} contains exactly 2 terms with the power $t^N$, these terms are (up to nonzero multiplicative constants) $\varphi_0(x) t^N$ and $\varphi_1(x) t^N$. The $x$-derivative of \eqref{KN alt} possesses the same properties, the terms with $t^N$ are $\varphi'_0(x) t^N$ and $\varphi'_1(x) t^N$.

The mentioned properties are sufficient to verify that the Cauchy problem
\begin{equation}\label{Cauchy prob K}
\begin{split}
    \Bigl(\frac{\partial^2}{\partial x^2}  - q(x)\Bigr) K(x,t) = \frac{\partial^2}{\partial t^2}K(x,t),\\
    K(b,t) = p_1(t),\qquad
    K_1(b,t) = p_2(t),
\end{split}
\end{equation}
where $p_1$ and $p_2$ are some polynomials of degree not greater than $N$, possesses a unique solution in the region $0\le |t|\le x\le b$ and this solution has the form \eqref{KN alt}.

Now the analogue of Theorem \ref{Thm Direct} can be obtained if one approximates $K(b,t)$ and $K_1(b,t)$ by polynomials and utilizes continuous dependence (see, e.g., \cite{Trikomi}) of the Cauchy problem \eqref{Cauchy prob K} on the initial data. The analogue of Theorem \ref{Thm Inverse} can be obtained by utilizing continuous dependence of the solution of the Goursat problem \eqref{GoursatKh1}, \eqref{GoursatKh2} on the data on the characteristics and reasoning as in the proof of \cite[Theorem 3.5]{KNT}.

The described method does not require any inverse operators however the order of the convergence rate estimate which can be obtained is less by 1 than those provided by Theorem \ref{Thm Direct}. The reason is that for $q\in C^p[0,b]$ one has $K(b,t)\in C^{p+1}[-b,b]$ while $K_1(b,t) \in C^p[-b,b]$ and hence the convergence rate of the polynomial approximations of $K_1$ has estimates which are one order less than those for the function $K$.
\end{remark}

\section{Transmutation operators restricted to the half segment}\label{Sect 8}
For the solution of spectral problems it may be convenient to make use of the known initial values \eqref{ICcos} and \eqref{ICsin}. For that, one can consider equation \eqref{SLlambda} restricted to the segment $[0,b]$. The solutions $c(\omega, x)$ and $s(\omega, x)$ can be calculated once one knows the integral kernel $\mathbf{K}(x,t)$ on the domain $R_1=\{(x,t):0\le |t|\le x\le b\}$, and it can be deduced from the integral equation \eqref{IntEq for K} that the integral kernel $\mathbf{K}$ is uniquely determined on $R_1$ by the parameter $h$ and values of the potential $q$ on $[0,b]$ and does not depend on the values of $q$ for $x<0$. That is, let the potential $q$ be given on $[0,b]$. One can consider any sufficiently smooth continuation of $q$ onto the segment $[-b,b]$, the resulting integral kernel of the transmutation operator on $R_1$ does not depend on the continuation. However the analytic approximation of the transmutation operator given by Theorem \ref{Th Kapprox} does, as well as the inverse operators $\mathbf{T}_f^{-1}$, $G_{1,2}^{-1}$ and the preimage of the integral kernel $\mathbf{k}$.

Note that the functions $\varphi_k$, $\psi_k$, $\mathbf{c}_k$, $\mathbf{s}_k$, $k\in \mathbb{N}_0$, are completely determined on $[0,b]$ by the values on $[0,b]$ of a particular solution $f$ and instead of considering the approximation problems \eqref{KxxErr} and \eqref{KxmxErr} on the whole segment $[-b,b]$ one can solve them on the half-segment $[0,b]$. It is  clear that the coefficients $a_0,\ldots,a_N,b_1,\ldots,b_N$ determined as solutions of the approximation problems \eqref{KxxErr} and \eqref{KxmxErr} on the half-segment $[0,b]$ would not necessary solve them on the whole segment $[-b,b]$, it depends on the choice of the continuation of the potential $q$. We already verified in various numerical examples (see \cite{KT AnalyticApprox}) that the method based on restrictions of the approximation problems \eqref{KxxErr} and \eqref{KxmxErr} to the half-segment works. In this section we briefly justify the main results for this restricted case.

First we prove the main approximation theorem.
\begin{theorem}\label{Th Kapprox Restr} Let the complex numbers
$a_{0},\ldots,a_{N}$ and $b_{1},\ldots,b_{N}$ be such that the inequalities
\eqref{KxxErr} and \eqref{KxmxErr} hold
for any $x\in[0,b]$. Then the kernel $\mathbf{K}(x,t)$ is
approximated by the function \eqref{K(x,t)}
in such a way that for any $(x,t)\in R_1$ the inequality
holds
\begin{equation*}
\bigl|\mathbf{K}(x,t)-K_{N}(x,t)\bigr|\leq C(\varepsilon_1+\varepsilon_2),
\end{equation*}
where
\[
C=3 I_{0}\big(b\sqrt
{M}\big),\qquad M:=\max_{[0,b]}|q(x)|,
\]
and $I_{0}$ is the modified Bessel
function of the first kind.
\end{theorem}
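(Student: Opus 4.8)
The plan is to treat $K_N$ as a perturbation of the Goursat data of $\mathbf K$ and to estimate $\mathbf K-K_N$ through the well-posedness of the Goursat problem, i.e. by a two-dimensional Gronwall (successive approximations) argument in the characteristic variables $\xi=\frac{x+t}{2}$, $\eta=\frac{x-t}{2}$, which carry $R_1$ onto the triangle $\{\,\xi,\eta\ge 0,\ \xi+\eta\le b\,\}$. I would not attempt to reduce this to Theorem \ref{Th Kapprox}: the hypotheses \eqref{KxxErr}--\eqref{KxmxErr} are only assumed on $[0,b]$ and say nothing on $[-b,0]$, whereas on $R_1$ both $\mathbf K$ and $K_N$ are determined by $h$ and $q|_{[0,b]}$ alone (this is exactly what makes the restricted statement meaningful), so a self-contained argument is called for.

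First I would record the traces of the error $E:=\mathbf K-K_N$ on the characteristics $t=\pm x$. From the definitions \eqref{cm}, \eqref{sm} and the parity of the generalized wave polynomials one has $u_0(x,\pm x)=f(x)=\mathbf c_0(x)$, $u_{2m-1}(x,\pm x)=\mathbf c_m(x)$, $u_{2m}(x,x)=\mathbf s_m(x)$ and $u_{2m}(x,-x)=-\mathbf s_m(x)$, so that \eqref{K(x,t)} gives
\[
K_N(x,x)=\sum_{n=0}^{N}a_n\mathbf c_n(x)+\sum_{n=1}^{N}b_n\mathbf s_n(x),\qquad
K_N(x,-x)=\sum_{n=0}^{N}a_n\mathbf c_n(x)-\sum_{n=1}^{N}b_n\mathbf s_n(x).
\]
Since $\mathbf K(x,x)=\tfrac h2+\tfrac12\int_0^x q$ and $\mathbf K(x,-x)=\tfrac h2$ by \eqref{GoursatKh2}, writing these as $\bigl(\tfrac h2+\tfrac14\int_0^x q\bigr)\pm\tfrac14\int_0^x q$ and adding, respectively subtracting, \eqref{KxxErr} and \eqref{KxmxErr}, I obtain $|E(x,x)|\le\varepsilon_1+\varepsilon_2$ and $|E(x,-x)|\le\varepsilon_1+\varepsilon_2$ for every $x\in[0,b]$; evaluating \eqref{KxxErr} at $x=0$, where $\mathbf c_0(0)=1$ and $\mathbf c_n(0)=0$ for $n\ge1$, gives $|E(0,0)|=\bigl|\tfrac h2-a_0\bigr|\le\varepsilon_1$.

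Next, since $K_N$ is a linear combination of generalized wave polynomials it is a twice continuously differentiable solution of \eqref{GoursatKh1}, so in the variables $(\xi,\eta)$, where \eqref{GoursatKh1} reads $\partial_\xi\partial_\eta v=q(\xi+\eta)v$, the function $H_N(\xi,\eta):=K_N(\xi+\eta,\xi-\eta)$ satisfies the Goursat integral equation obtained by integrating this identity over $[0,\xi]\times[0,\eta]$, with free term $K_N(\xi,\xi)+K_N(\eta,-\eta)-K_N(0,0)$; and $\mathbf H(\xi,\eta):=\mathbf K(\xi+\eta,\xi-\eta)$ satisfies exactly \eqref{IntEq for K}, which is precisely that same Goursat integral equation for the data $\mathbf K(x,x)$, $\mathbf K(x,-x)$. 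Subtracting, the error $\mathcal E(\xi,\eta):=\mathbf H(\xi,\eta)-H_N(\xi,\eta)$ obeys
\[
\mathcal E(\xi,\eta)=E(\xi,\xi)+E(\eta,-\eta)-E(0,0)+\int_0^{\xi}\!\!\int_0^{\eta}q(\alpha+\beta)\,\mathcal E(\alpha,\beta)\,d\beta\,d\alpha ,
\]
and by the previous step its free term is bounded in modulus by $3(\varepsilon_1+\varepsilon_2)$ throughout the triangle. With $M:=\max_{[0,b]}|q(x)|$ this yields $|\mathcal E(\xi,\eta)|\le 3(\varepsilon_1+\varepsilon_2)+M\int_0^\xi\int_0^\eta|\mathcal E|$, a standard two-dimensional Volterra inequality; iterating it (the $k$-th iterated kernel equals $M^k\xi^k\eta^k/(k!)^2$) gives $|\mathcal E(\xi,\eta)|\le 3(\varepsilon_1+\varepsilon_2)\sum_{k\ge0}(M\xi\eta)^k/(k!)^2=3(\varepsilon_1+\varepsilon_2)\,I_0\bigl(2\sqrt{M\xi\eta}\bigr)$. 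On $R_1$ one has $\xi\eta=(x^2-t^2)/4\le b^2/4$, and since $I_0$ is increasing this produces $|\mathbf K(x,t)-K_N(x,t)|\le 3I_0(b\sqrt M)(\varepsilon_1+\varepsilon_2)$, which is the assertion with $C=3I_0(b\sqrt M)$.

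The one point I expect to require genuine care, rather than routine computation, is the passage to the integral equation for the difference when $q$ is merely continuous: then $\mathbf K$ need not be twice differentiable, so instead of differentiating $\mathbf K$ I would keep it in the integral form \eqref{IntEq for K} and use only the honest smoothness of the polynomial object $K_N$ to put $H_N$ into the matching integral form; once both are written as Volterra equations, the subtraction and the Gronwall estimate are immediate. (For $q\in C^1[0,b]$ one may alternatively argue throughout with \eqref{GoursatKh1}--\eqref{GoursatKh2}.)
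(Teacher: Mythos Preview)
Your argument is correct and follows essentially the same route as the paper's proof: pass to the characteristic variables, observe that the error $\mathbf K-K_N$ satisfies the Goursat integral equation with free term bounded by $3(\varepsilon_1+\varepsilon_2)$ (the paper records $|H^e(0,0)|\le\varepsilon_1+\varepsilon_2$ where you have the slightly sharper $\varepsilon_1$, but this is immaterial), and then iterate to obtain the $I_0$ bound. Your additional remarks about handling the case of merely continuous $q$ by working directly with the integral form \eqref{IntEq for K} rather than the differential Goursat problem are well taken and make explicit a point the paper leaves implicit.
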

\begin{proof}
Consider the difference $K^e = \mathbf{K} - K_N$ and the corresponding function $H^e(u,v)=K^e\big(u+v, u-v\big)$, $0\le u+v\le b$. It is the unique solution of the Goursat problem \eqref{GoursatTh1}, \eqref{GoursatTh2} and due to the inequalities \eqref{KxxErr} and \eqref{KxmxErr} the functions $H^e(u,0)$ and $H^e(0,v)$ are bounded by $\varepsilon_1+\varepsilon_2$. The Goursat problem \eqref{GoursatTh1}, \eqref{GoursatTh2} is equivalent to the following integral equation
\[
H^e(u,v) = H^e(u,0) + H^e(0,v) - H^e(0,0) + \int_0^u \int_0^v q(u'+v')H(u',v')\,du'\,dv',
\]
whose solution can be estimated by the successive iterations method, see, e.g., \cite[Proposition 18]{KKTT}. Note that $|H^e(0,0)| \le \varepsilon_1+\varepsilon_2$, hence the successive iterations provide that
\[
|H^e(u,v)|\le 3(\varepsilon_1+\varepsilon_2) \sum_{k=0}^\infty\frac{M^k |uv|^k}{k!k!}=3 (\varepsilon_1+\varepsilon_2)I_0\bigl(\sqrt{M|uv|}\bigr)\le
3 (\varepsilon_1+\varepsilon_2)I_0\bigl(b\sqrt{M}\bigr).
\]
\end{proof}

Approximations of the derivatives $c'(\omega,x)$ and $s'(\omega,x)$ require considering additionally a transmutation operator for the Darboux-associated equation
\begin{equation}\label{SL Darboux}
    -u''+q_D(x)u = \omega ^2 u,
\end{equation}
where
\begin{equation*}
    q_D = 2\left(\frac {f'}{f}\right)^2 - q.
\end{equation*}
The function $1/f$ is a particular solution of \eqref{SL Darboux} for $\omega=0$ satisfying $(1/f)(0)=1$ and $(1/f)'(0) = -h$. Consider the transmutation operator $\mathbf{T}_{1/f}$ for \eqref{SL Darboux}, where the subindex $1/f$ means that the operator $\mathbf{T}_{1/f}$ is such that $\mathbf{T}_{1/f}[1]=1/f$. Denote the integral kernel of $\mathbf{T}_{1/f}$ by $\mathbf{K}_D$. We refer the reader to \cite{KT AnalyticApprox}, \cite{KT Transmut} for further details.

As was shown in \cite{KT AnalyticApprox}, the pair of operators $\mathbf{T}_f$ and $\mathbf{T}_{1/f}$ allows one to write the derivatives of the solutions of \eqref{SLomega2} in the following form
\begin{align*}
    c'(\omega,x) & = \frac{f'}{f}c(\omega, x) - \omega \mathbf{T}_{1/f}[\sin\omega x],\\
    s'(\omega,x) & = \frac{f'}{f}s(\omega, x) + \mathbf{T}_{1/f}[\cos\omega x].
\end{align*}

Recall that the generalized wave polynomials for the operator $\mathbf{T}_{1/f}$ have the form
\begin{equation*}
v_{0}=\psi_{0}(x),\quad v_{2m-1}(x,t)=\sum_{\text{even }k=0}^{m}\binom
{m}{k}\psi_{m-k}(x)t^{k},\quad v_{2m}(x,t)=\sum_{\text{odd }k=1}^{m}%
\binom{m}{k}\psi_{m-k}(x)t^{k}.
\end{equation*}
Denote by $\mathbf{\widetilde c}_n$ and $\mathbf{\widetilde s}_n$ the corresponding traces, c.f., \eqref{cm} and \eqref{sm}.

It was proved in \cite[Theorem 6.4]{KT AnalyticApprox} that if one constructs an approximation of $\mathbf{K}$ via Theorem \ref{Th Kapprox} then the expression
\begin{equation}\label{KD_N}
    K_{D,N}= -b_{0}v_{0}(x,t)-\sum_{n=1}^{N}b_{n}v_{2n-1}(x,t)-\sum_{n=1}%
^{N}a_{n}v_{2n}(x,t),
\end{equation}
where $b_0:=a_0$, can be used as an approximation to the integral kernel $\mathbf{K}_D$. We present a simpler proof of this result, also suitable for the case of the half-segment.

\begin{theorem}\label{Th KDapprox}
Let the complex numbers
$a_{0},\ldots,a_{N}$ and $b_{1},\ldots,b_{N}$ be such that the inequalities
\eqref{KxxErr} and \eqref{KxmxErr} hold
for any $x\in[0,b]$. Then the kernel $\mathbf{K}_D(x,t)$ is
approximated by the function \eqref{KD_N}
in such a way that for any $(x,t)\in R_1$ the inequality
holds
\begin{equation}\label{K_D - K_D,N estimate}
\bigl|\mathbf{K}_D(x,t)-K_{D,N}(x,t)\bigr|\leq C\bigl(\varepsilon_1M_1+(\varepsilon_1+\varepsilon_2)(2M_1M_2b + M_1+1)\bigr),
\end{equation}
where
\[
C=3I_{0}\big(b\sqrt
{M}\big),\qquad M:=\max_{[0,b]}|q_D(x)|,\qquad M_1:=\max_{[0,b]}\left|\frac{1}{f(x)}\right|,\qquad M_2:=\max_{[0,b]}|f'(x)|
\]
and $I_{0}$ is the modified Bessel
function of the first kind.
\end{theorem}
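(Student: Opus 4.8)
The plan is to reduce the error estimate for $\mathbf{K}_D$ to that for $\mathbf{K}$ already established in Theorem \ref{Th Kapprox Restr}, by exhibiting $K_{D,N}$ as the analytic approximation of $\mathbf{K}_D$ associated with appropriate approximation data for the Goursat conditions of the Darboux-associated problem, and then running the same Goursat integral-equation argument. First I would recall from Theorem \ref{Th Transmute} (applied to $\mathbf{T}_{1/f}$, whose generating solution is $1/f$) that $\mathbf{T}_{1/f}[p_n]=v_n$, so that $K_{D,N}$ of \eqref{KD_N} is exactly the trace-type combination whose Goursat data on the characteristics $t=x$ and $t=-x$ are controlled by the same coefficients $a_n$, $b_n$. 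Concretely, the Goursat conditions \eqref{GoursatTh2} for $\mathbf{K}_D$ read $\mathbf{H}_D(u,0)=\frac{-h}{2}+\frac 12\int_0^u q_D(s)\,ds$ and $\mathbf{H}_D(0,v)=\frac{-h}{2}$, since $(1/f)'(0)=-h$. So I would write $H_D^e:=\mathbf{H}_D-H_{D,N}$ where $H_{D,N}(u,v)=K_{D,N}(u+v,u-v)$, and the whole task is to bound $|H_D^e(u,0)|$ and $|H_D^e(0,v)|$ by a constant multiple of $\varepsilon_1M_1+(\varepsilon_1+\varepsilon_2)(\cdots)$; once that is done, the successive-iterations estimate for the Goursat integral equation (exactly as in the proof of Theorem \ref{Th Kapprox Restr}, with $M=\max_{[0,b]}|q_D|$) yields \eqref{K_D - K_D,N estimate} with the stated $C=3I_0(b\sqrt M)$.

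The heart of the matter is therefore the estimate of $K_{D,N}$ on the two characteristics. On $t=-x$ the generalized wave polynomials satisfy $v_{2m-1}(x,-x)=v_{2m-1}(x,x)$ and $v_{2m}(x,-x)=-v_{2m}(x,x)$ (the parity relations), so $K_{D,N}(x,-x)$ collapses, as with $\mathbf{c}_n,\mathbf{s}_n$, to a $\widetilde{\mathbf c}$-type combination; I would show it equals $-\psi_0(x)\sum a_n(\text{something})$ and compare it to the exact value $\mathbf{H}_D(0,\frac{x-(-x)}2)=-h/2$, which after using \eqref{KxxErr}, \eqref{KxmxErr} is where the factor $M_1=\max|1/f|$ enters (the trace of $v_n$ carries $\psi_{n-k}$, hence a factor $1/f$). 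On $t=x$ one must compare $K_{D,N}(x,x)$, a $\widetilde{\mathbf c}_n/\widetilde{\mathbf s}_n$-type combination, with $\mathbf{H}_D(\frac{2x}2,0)=-\frac h2+\frac 12 Q_D(x)$ where $Q_D(x)=\int_0^x q_D(s)\,ds=\int_0^x\bigl(2(f'/f)^2-q\bigr)(s)\,ds$. Here I would use the explicit relation (analogous to $\mathbf K(x,x)=\frac h2+\frac12 Q(x)$) between the traces $\widetilde{\mathbf c}_n(x),\widetilde{\mathbf s}_n(x)$ and the functions $\psi_k$; and crucially I would reuse the identity that expresses $Q_D$ in terms of $Q$ and $f'/f$, or more directly the fact (from \cite{KT AnalyticApprox}) that the same coefficients $a_n,b_n$ that approximate $\frac h2+\frac14 Q$ and $\frac14 Q$ in the $\mathbf c$-$/\mathbf s$-systems produce, via the $\psi$-systems, approximations of the corresponding Darboux Goursat data with error controlled by $\varepsilon_{1,2}$ times powers of $M_1,M_2$. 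Differentiating or integrating these relations is where the extra terms $2M_1M_2 b$ (one factor $M_2=\max|f'|$ from $f'/f$, one factor $b$ from an integration in $x$), $M_1$, and $+1$ arise.

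The main obstacle I anticipate is precisely the bookkeeping on the characteristic $t=x$: unlike the $\mathbf K$ case, the Goursat datum $H_D(u,0)$ is not simply a constant multiple of the approximated functions $\frac h2+\frac14 Q$ and $\frac14 Q$ — it involves $Q_D$, which mixes $q$ with $(f'/f)^2$ — so I must either (i) invoke the closed-form relationship between $\mathbf K_D$ and $\mathbf K$ from \cite[Theorem 6.4]{KT AnalyticApprox}, \cite{KT Transmut}, which already packages this algebra, and then just re-derive the sharp constant by the Goursat iteration; or (ii) carry out the comparison term by term using the definitions \eqref{cm}, \eqref{sm}, \eqref{phik}, \eqref{psik} of $\varphi_k,\psi_k$, tracking how a bound $\varepsilon_{1,2}$ on $\sum a_n\mathbf c_n$ (which lives in the $f$-world) transfers to a bound on $\sum b_n v_{2n-1}(\cdot,\cdot)$-type sums (which live in the $1/f$-world), the transfer costing exactly the factors $M_1$ and $M_1M_2 b$ claimed. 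I would follow route (i) for brevity: assume the representation of $\mathbf K_D$ in terms of $\mathbf K$ and its derivatives established in \cite[Theorem 6.4]{KT AnalyticApprox}, substitute $K_N$ for $\mathbf K$, estimate each resulting term using Theorem \ref{Th Kapprox Restr} together with the a priori bounds $|\mathbf K(t,x)|,|K_N(t,x)|\le(\text{const})$ on $R_1$, and collect the coefficients of $\varepsilon_1$ and $\varepsilon_1+\varepsilon_2$ to land on the displayed constant; the Bessel factor $C=3I_0(b\sqrt M)$ with $M=\max|q_D|$ then comes verbatim from re-running the successive-iterations bound of Theorem \ref{Th Kapprox Restr} for the Darboux Goursat problem.
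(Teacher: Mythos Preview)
Your overall architecture is right: define the Goursat error $H_D^e=\mathbf H_D-H_{D,N}$, bound $|H_D^e(u,0)|$ and $|H_D^e(0,v)|$, and then run the successive-iterations estimate for the Darboux Goursat integral equation to get the Bessel factor $C=3I_0(b\sqrt M)$. That is exactly what the paper does at the outer level. The disagreement is in how the characteristic errors are bounded, and there your preferred route (i) has a real gap.

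Route (i) proposes to take a ``representation of $\mathbf K_D$ in terms of $\mathbf K$ and its derivatives'' from \cite[Theorem 6.4]{KT AnalyticApprox}, substitute $K_N$ for $\mathbf K$, and estimate using Theorem~\ref{Th Kapprox Restr}. Two problems. First, Theorem~\ref{Th Kapprox Restr} controls only $|\mathbf K-K_N|$ on $R_1$, not any derivative of that difference; if the representation you invoke involves $\partial_x\mathbf K$ or $\partial_t\mathbf K$, you have no bound to feed in, and the constants $M_1,M_2$ you are aiming for will not drop out. Second, the whole point of Section~\ref{Sect 8} (stated explicitly just before the theorem) is to give a proof that does \emph{not} rely on the machinery of \cite{KT AnalyticApprox} --- in particular no inverse operators and no extension to $[-b,b]$ --- so citing that theorem as the engine is either circular (it is the result being re-proved) or imports exactly the dependence the present proof is meant to eliminate.

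The paper's actual device is closer to your route (ii), but with a specific identity you did not guess. It sets
\[
g_1=\tfrac h2+\tfrac14Q-\sum a_n\mathbf c_n,\quad g_2=\tfrac14Q-\sum b_n\mathbf s_n,\quad
\widetilde g_1=-\tfrac h2+\tfrac14Q_D+\sum b_n\widetilde{\mathbf c}_n,\quad
\widetilde g_2=\tfrac14Q_D+\sum a_n\widetilde{\mathbf s}_n,
\]
and verifies the \emph{pointwise differential intertwining relations}
\[
f\,\partial\!\left(\tfrac{1}{f}\,g_1\right)=-\tfrac{1}{f}\,\partial\!\left(f\,\widetilde g_2\right),\qquad
f\,\partial\!\left(\tfrac{1}{f}\,g_2\right)=-\tfrac{1}{f}\,\partial\!\left(f\,\widetilde g_1\right),
\]
which follow term by term from $f\partial\tfrac1f\mathbf c_m=m(\mathbf s_{m-1}+\widetilde{\mathbf c}_{m-1})=\tfrac1f\partial f\,\widetilde{\mathbf s}_m$ (and the analogous identity with $\mathbf s_m$, $\widetilde{\mathbf c}_m$) together with the elementary relation between $\int q_D$ and $\int q$ via $f'/f$. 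Inverting these first-order relations by one integration against $f$ gives closed formulas
\[
\widetilde g_1(x)=\frac{c_1}{f(x)}-g_2(x)+\frac{g_2(0)}{f(x)}+\frac{2}{f(x)}\int_0^x f'(s)g_2(s)\,ds,
\]
and similarly for $\widetilde g_2$ in terms of $g_1$; evaluating at $x=0$ fixes the constants ($c_1=a_0-\tfrac h2=-g_1(0)$, $c_2=0$). Now $|g_i|\le\varepsilon_i$ immediately yields $|\widetilde g_1|\le \varepsilon_1M_1+\varepsilon_2(2M_1M_2b+M_1+1)$ and $|\widetilde g_2|\le \varepsilon_1(2M_1M_2b+M_1+1)$, which is exactly where the factors $M_1$, $M_2$ and the extra $b$ come from --- one integration of $f'g_i$ over $[0,x]$, divided by $f$. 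Feeding these into Theorem~\ref{Th Kapprox Restr} (applied to the Darboux equation) finishes the proof with the stated constant. So the missing idea in your sketch is this pair of first-order intertwining identities linking the $\mathbf c/\mathbf s$-errors directly to the $\widetilde{\mathbf c}/\widetilde{\mathbf s}$-errors; once you have them, no kernel representation and no derivative bounds on $\mathbf K-K_N$ are needed.
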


\begin{proof}
Consider the functions
\begin{align}
    \widetilde g_1(x) &= -\frac h2 + \frac 14\int_0^x q_D(s)\,ds + \sum_{n=0}^N b_n \mathbf{\widetilde c}_n(x),\label{g1 def}\\
    \widetilde g_2(x) &= \frac 14\int_0^x q_D(s)\,ds + \sum_{n=1}^N a_n \mathbf{\widetilde s}_n(x).\label{g2 def}
\end{align}
Note that it is sufficient to show that the functions $\widetilde g_1$ and $\widetilde g_2$ are small enough to derive \eqref{K_D - K_D,N estimate} from Theorem \ref{Th Kapprox Restr} applied to equation \eqref{SL Darboux}. Consider also the functions
\begin{align*}
    g_1(x) &= \frac h2 + \frac 14\int_0^x q(s)\,ds - \sum_{n=0}^N a_n \mathbf{c}_n(x),\\
    g_2(x) &= \frac 14\int_0^x q(s)\,ds - \sum_{n=1}^N b_n \mathbf{s}_n(x),
\end{align*}
the left-hand sides of the inequalities \eqref{KxxErr} and \eqref{KxmxErr}.

One can easily verify that the following equalities hold
\begin{gather*}
    \frac 1f \partial f\biggl( \frac 14 \int_0^x q_D(s)\,ds\biggr)  = - f\partial \frac 1f \biggl(\frac h2 +\frac 14\int_0^x q(s)\,ds\biggr),\\
    \frac 1f \partial f\biggl( -\frac h2+\frac 14 \int_0^x q_D(s)\,ds\biggr)  = - f\partial \frac 1f \biggl(\frac 14\int_0^x q(s)\,ds\biggr),\\
    f\partial\frac 1f \mathbf{c}_m = m(\mathbf{s}_{m-1} + \mathbf{\widetilde c}_{m-1}) = \frac 1f \partial f \mathbf{\widetilde s}_m,\qquad m=1,\ldots\\
    f\partial\frac 1f \mathbf{s}_m = m(\mathbf{c}_{m-1} + \mathbf{\widetilde s}_{m-1}) = \frac 1f \partial f \mathbf{\widetilde c}_m,\qquad m=1,\ldots\\
    f\partial \frac 1f \mathbf{c}_0 = \frac 1f \partial f \mathbf{\widetilde c}_0=0,
\end{gather*}
where for convenience we denoted $\mathbf{s}_0=\mathbf{\widetilde s}_0=0$. Hence
\begin{equation}\label{g1 Goursat}
    f\partial \frac 1f g_1 = -\frac 1f \partial f \widetilde g_2
\end{equation}
and
\begin{equation}\label{g2 Goursat}
    f\partial \frac 1f g_2 = -\frac 1f \partial f \widetilde g_1.
\end{equation}
Inverting the equalities \eqref{g1 Goursat} and \eqref{g2 Goursat} one obtains that
\begin{equation}\label{g1 inv}
    \begin{split}
       \widetilde g_1(x) &=  \frac{c_1}{f(x)} - \frac 1{f(x)}\int_0^x f^2(s)\left(\frac{g_2(s)}{f(s)}\right)^{\prime}\,ds  \\
         & = \frac{c_1}{f(x)} - g_2(x) + \frac{g_2(0)}{f(x)} + \frac 2{f(x)}\int_0^x f'(s)g_2(s)\,ds
     \end{split}
\end{equation}
and
\begin{equation}\label{g2 inv}
       \widetilde g_2(x) = \frac{c_2}{f(x)} - g_1(x) + \frac{g_1(0)}{f(x)} + \frac 2{f(x)}\int_0^x f'(s)g_1(s)\,ds,
\end{equation}
where $c_1$ and $c_2$ are some constants. Substituting $x=0$ into \eqref{g1 inv}, \eqref{g2 inv} and \eqref{g1 def}, \eqref{g2 def} one can find that $c_1 = b_0 - \frac h2=a_0-\frac h2 = -g_1(0)$ and $c_2=0$. Finally, using the inequalities $|g_1(x)|\le \varepsilon_1$ and $|g_2(x)|\le \varepsilon_2$, $x\in [0,b]$, one obtains the following estimates
\begin{equation*}
    \left|\widetilde g_1(x)\right|\le \varepsilon_1 M_1 + \varepsilon_2(2M_1 M_2|x| + M_1 + 1)\qquad \text{and}\qquad \left|\widetilde g_2(x)\right|\le \varepsilon_1(2M_1 M_2|x| + M_1 + 1),
\end{equation*}
which combined with Theorem \ref{Th Kapprox Restr} finish the proof.
\end{proof}

\end{document}